\def\0D{\Delta^{(0)}}
\def\1D{\Delta^{(1)}}
\newcommand{\propA}{property \textbf{A} }
\newcommand{\Cc}{\mathcal{C}}
\newcommand{\Tc}{\mathcal{T}}
\newcommand{\Mc}{\mathcal{M}}
\newcommand{\Zc}{\mathcal{Z}}
\newcommand{\Oc}{\mathcal{O}}
\newcommand{\Ac}{\mathcal{A}}
\newcommand{\hmod}{{_H\mathcal{M}}}
\newcommand{\amod}{{_A\mathcal{M}}}
\newcommand{\Bmod}{{_B\mathcal{M}}}
\newcommand{\cmod}{{_C\mathcal{M}}}
\newcommand{\kmod}{{_K\mathcal{M}}}
\newcommand{\hhmod}{{_{H^2}\mathcal{M}}}
\newcommand{\Dc}{\mathcal{D}}
\newcommand{\Nc}{\mathcal{N}}
\newcommand{\Ec}{\mathcal{E}}
\newcommand{\aydc}{\widehat{a\mathcal{YD}}}
\newcommand{\aydm}{a\mathcal{YD}}
\newcommand{\indlimD}{\displaystyle{\varinjlim_{\Delta}}}
\newcommand{\projlimD}{\displaystyle{\varprojlim_{\Delta}}}
\newtheorem{theorem}{Theorem}[section]
\newtheorem{remark}[theorem]{Remark}
\newtheorem{proposition}[theorem]{Proposition}
\newtheorem{lemma}[theorem]{Lemma}
\newtheorem{corollary}[theorem]{Corollary}
\newtheorem{definition}[theorem]{Definition}
\def\build#1_#2^#3{\mathrel{\mathop{\kern 0pt#1}\limits_{#2}^{#3}}}
\newcommand{\vect}{\text{Vec}}
\newcommand{\bimod}{\mathbb{B}\text{md}}
\newcommand{\la}{\triangleright}
\newcommand{\ra}{\triangleleft}
\newcommand{\lba}{\blacktriangleright}
\newcommand{\bt}{\boxtimes}
\newcommand{\mmm}{{\text{-mod}}}
\numberwithin{equation}{section}
\def\ot{\otimes}
\def\part{\partial}
\def\ot{\otimes}
\def\build#1_#2^#3{\mathrel{
\mathop{\kern 0pt#1}\limits_{#2}^{#3}}}
\numberwithin{equation}{section}
\newcommand{\comment}[1]{\relax}
\begin{document}

\title{Categorified Chern character and cyclic cohomology.}
\author{Ilya Shapiro}

\date{}
\maketitle

\begin{abstract}
 We examine Hopf cyclic cohomology in the same context as the analysis \cite{it, loop1, loop2} of the geometry of loop spaces $LX$ in derived algebraic geometry and the resulting close relationship between $S^1$-equivariant quasi-coherent sheaves on $LX$ and $D_X$-modules.  Furthermore, the Hopf setting serves as a toy case for the categorification of Chern character theory as discussed in \cite{toenvezz1}. More precisely, this examination naturally leads to a  definition of mixed anti-Yetter-Drinfeld contramodules which reduces to that of the usual mixed complexes for the trivial Hopf algebra and generalizes the notion of stable anti-Yetter-Drinfeld contramodules that have thus far served as the coefficients for Hopf-cyclic theories \cite{contra}.  The cohomology is then obtained as a $Hom$ in this dg-category between a Chern character object associated to an algebra and an arbitrary coefficient mixed anti-Yetter-Drinfeld contramodule. 
\end{abstract}

\medskip
{\it Mathematics Subject Classification:} 19D55, 16E40, 16T05, 18D10.

\section{Introduction}

This paper is motivated by the connection, observed in \cite{hks}, between the Hopf-cyclic theory coefficients and the objects appearing in a twice categorified 1dTFT (one dimensional topological field theory) \cite{dsps}.  We begin to explore this link and its implications with the goal of better understanding and generalizing coefficients, as well as reinterpreting the Hopf cyclic cohomology itself.  The main concepts involved are as follows.  For a monoidal category $\Cc$ and its bimodule category $\Mc$ we have the Hochschild homology category $HH(\Cc,\Mc)$ and the cyclic homology category $HC(\Cc)$.  These are categorifications of the Hochschild homology of an algebra with coefficients in a bimodule, and the cyclic homology of an algebra respectively.  Furthermore, as the title suggests, the notion of a categorified version of the Chern character is essential for our purposes and is also of independent interest.

We do not    actually use the language of differential graded categories, instead everything takes place inside the $2$-category of categories with the additional requirement that functors possess right adjoints.  Often we consider the dual picture where the adjoints are themselves the $1$-morphisms and so we have a $2$-category of categories with functors possessing left adjoints.  The disadvantage of foregoing dg-categories is that the naive, i.e., resulting from the $2$-category setting above, $HC(\Cc)$ is not sufficient for our purposes; though it recovers the classical coefficients.  The advantage is that the exposition is considerably less sophisticated.  We get around the shortcomings of the naive approach in an ad hoc manner that is suggested by the dg setting.

Briefly, for a Hopf algebra $H$, motivated by some results in derived algebraic geometry \cite{it, loop1, loop2}, we propose a generalization of stable anti-Yetter-Drinfeld contramodules (with \emph{mixed} replacing stable) as an analogue of  $QC(LX)^{S^1}$, the $S^1$-equivariant quasi-coherent sheaves on the derived loop space of $X$.  This category serves both as the target for the categorified Chern characters of $H$-module algebras and also as the source of coefficients for the cohomology theory.  The Hopf-cyclic cohomology is then recovered as an $Ext$ in this category as was previously done by Connes and Kassel for cyclic cohomology, using cyclic objects \cite{connes}, and mixed complexes \cite{kassel} respectively.  This places Hopf-cyclic cohomology into the same framework as de Rham cohomology. Roughly speaking, $QC(LX)^{S^1}$ (related to $D_X$-modules)  is the cyclic homology of the monoidal category $QC(X)$, while \emph{mixed} anti-Yetter-Drinfeld contramodules form the cyclic homology (in the proper sense) of $\hmod$, the category of modules over the Hopf algebra $H$.

\textbf{Conventions}: All algebras $A$ in monoidal categories $\Cc$ are assumed to be unital associative, unless stated otherwise; we say that $A\in Alg(\Cc)$.  We let $A_\Cc\mmm$ stand for left $A$-modules in $\Cc$ which is a right $\Cc$-module category.  Similarly, $\bimod_\Cc A$ denotes the dual monoidal category of $A$-bimodules in $\Cc$. Our $H$ is a Hopf algebra over some ground field $k$ which is fixed throughout the paper, thus $\vect$ denotes the category of $k$-vector spaces.

\textbf{Outline}:  Section \ref{sec:limit} is provided as a guide and contains no proofs.  It uses the TFT formalism as motivation.  Section \ref{hochschildhomology} discusses, in general terms, the construction of a monad on a bimodule category such that modules over the monad are precisely its naive Hochschild homology category.  This construction is not always possible, but proceeds as described provided that, what we call, \propA  holds. Section \ref{sec:cyclicgen} describes what happens in the cyclic case.  Section \ref{ss:hhadj} expands on the discussion began in Section \ref{sec:limit} concerning the adjoint pair of functors relating the Hochschild homology categories of $\Cc$ and $\Dc$ and induced by an admissible $(\Cc,\Dc)$-bimodule $\Mc$.  This helps motivate the Definition \ref{def:chernhmod} of the (categorified) Chern character, and will also be used in Section  \ref{sec:appl} in the Hopf case.  

Section \ref{specialcase} applies the notions of Section \ref{hochschildhomology} to the case of $\hmod$, the category of modules over a Hopf algebra.  In particular it is shown that \propA holds for $\hmod$. Furthermore, $HH(\hmod)$, the naive Hochschild homology of $\hmod$ coincides with anti-Yetter-Drinfeld contramodules as seen from Proposition \ref{prop:ayd} and $HC(\hmod)$, the naive cyclic homology of $\hmod$ coincides with the stable  anti-Yetter-Drinfeld contramodules as seen from Theorem \ref{thm:stableayd}.

Section \ref{subsec:mixedaYDcontra} begins by addressing the deficiency of $HC(\Cc)$ in the Definition \ref{S1act}; the $HH(\Cc)^{S^1}$ so defined is certainly the correct cyclic homology of $\Cc$ in the dg-sense.   We proceed by defining the Chern character $ch(A)$ in an ad hoc manner, motivated by Sections \ref{sec:limit} and \ref{ss:hhadj}.  We conjecture that the definition would be a consequence of those sections, were they to be made rigorous.  In Section \ref{subsec:nvso} we prove Theorem \ref{th:oldnew} which relates the usual definition \cite{contra} of Hopf cyclic cohomology of an algebra  $A$ with coefficients in a stable  anti-Yetter-Drinfeld contramodule $M$ to an $Ext$ between $ch(A)$ and $M$ in the category of mixed anti-Yetter-Drinfeld contramodules.  The stable $M$ can of course be replaced by a mixed one, thereby generalizing the coefficients.  Alternatively, in Section \ref{tsygan} we pursue Tsygan's approach to cyclic homology via a double complex.  More precisely, we describe a tricomplex that computes Hopf-cyclic cohomology of $A$ with coefficients in a mixed anti-Yetter-Drinfeld contramodule.  The advantage is that it does not require $A$ to be unital.  We make no claims about the compatibility of this approach with the $Ext$ one.

Section \ref{sec:comodules} is a summary of what would happen to Section \ref{specialcase} were it repeated for $\Mc^H$, the monoidal category of $H$-comodules.  It is an outline without proofs, though these can easily be filled in, unlike in the case of the more conjectural sections.  The main point is that stable  anti-Yetter-Drinfeld \emph{modules} arise as $HC(\Mc^H)$.

Section \ref{sec:appl} contains a discussion of applications of Sections \ref{sec:limit} and \ref{ss:hhadj} to the Hopf setting. In particular \eqref{eq:theadj11}, obtained from the relationship between Hochschild homologies of $\hmod$ and $\Mc^H$ via the admissible bimodule category $\vect$, is exactly that which forms the backbone of \cite{s2}.

Finally, Section \ref{sec:app} is meant to provide a bridge between this paper and the literature that motivated it.  More precisely, in Section \ref{ss:qcx} we attempt to explain, by sketching the commutative case, why the mixed anti-Yetter-Drinfeld contramodules in the Hopf setting are exactly analogous to the $D$-modules in the geometric setting.  In Section \ref{contratraces} we link the monadic approach with the previous attempts \cite{hks, ks, ks2, s, s2} to understand Hopf cyclic coefficients as centers of certain bimodule categories.

\textbf{Acknowledgments}: The author wishes to thank Masoud Khalkhali and Ivan Kobyzev for stimulating discussions.   This research was supported in part by the NSERC Discovery Grant number 406709.

\section{TFT patterns}\label{sec:limit}

This section is motivational in nature and is concerned with exploring the TFT point of view that will be helpful in the analysis of Hopf cyclic cohomology.

Considering twice categorified local $1$-dimensional topological field theories, see \cite{dsps}  for a good overview, we obtain a way to keep track of all the conjectural relationships between the notions considered in the present paper.  Namely, such a TFT assigns a monoidal category $\Cc$ to a point and consequently $HH(\Cc)$ to $S^1$, and the non-trivial self-homotopy of the identity on $S^1$ is assigned an automorphism of the identity functor on $HH(\Cc)$, i.e., the $S^1$-action.   A map between TFTs $\Cc$ and $\Dc$ is then given by a suitable $(\Cc,\Dc)$-bimodule category $\Mc$.

More precisely, as suggested by \cite{s}, we consider only admissible $\Mc$, i.e.,
\begin{definition}\label{def:admissible}
We say that a $(\Cc,\Dc)$-bimodule category $\Mc$ is admissible if $\Mc\simeq B_\Dc\mmm$, for some algebra $B\in\Dc$, as a right $\Dc$-module category, and the left $\Cc$-action is given by a monoidal functor $\Cc\to\bimod_\Dc B$ that has a right adjoint.
\end{definition}

Furthermore, we expect that (see Section \ref{ss:hhadj} for constructions)  such an $\Mc$ induces an adjoint pair of functors $(HH(\Mc)^*,HH(\Mc)_*)$: \begin{equation}\label{eq:hhadj}HH(\Mc)^*:HH(\Cc)\leftrightarrows HH(\Dc):HH(\Mc)_*\end{equation} that are compatible with the $S^1$-actions.  Given an $A\in Alg(\Cc)$, we observe that $A_\Cc\text{-mod}$ is an example of an admissible $(\vect,\Cc)$-bimodule, thus: 

\begin{definition}\label{def:chern1}

Let the Chern character of $A$ be \begin{equation*}ch(A)=HH(A_\Cc\text{-mod})^*k\in HH(\Cc)^{S^1}\end{equation*} where $k\in\vect^{S^1}$ is the trivial mixed complex. 
\end{definition}

Moreover, given an admissible $(\Dc,\Ec)$-bimodule $\Nc$, it is possible (\cite{s} and Remark \ref{rem:t}) to define an admissible $(\Cc,\Ec)$-bimodule suggestively denoted $\Mc\bt_\Dc \Nc$, and we should have an equivalence of $S^1$-equivariant functors: $$HH(\Mc\bt_\Dc\Nc)^*\simeq HH(\Nc)^*HH(\Mc)^*$$ and thus $HH(\Mc\bt_\Dc\Nc)_*\simeq HH(\Mc)_*HH(\Nc)_*$. 

\begin{definition}
Given an $A\in Alg(\Cc)$, let $\Mc^*A\in Alg(\Dc)$ be the image of $A$ under $\Cc\stackrel{act}{\to}\bimod_\Dc B\stackrel{U}{\to}\Dc$, where $U$ forgets the $B$-bimodule structure.  

\end{definition}

We have $A_\Cc\mmm\bt_\Cc\Mc=(\Mc^*A)_\Dc\mmm$ and \begin{equation}\label{eq:comp}HH(\Mc)^*ch(A)\simeq ch(\Mc^*A)\end{equation} since the former is $$HH(\Mc)^*HH(A_\Cc\text{-mod})^*k\simeq HH(A_\Cc\mmm\bt_\Cc\Mc)^*k=HH((\Mc^*A)_\Dc\mmm)^*k$$ which is the latter.

\begin{remark}\label{rem:t}
Note that there is of course a more conceptual definition of $\Mc\bt_\Dc\Nc$ as the Hochschild homology category, i.e., $HH(\Dc,\Mc\bt\Nc)$.   By \cite{it} the concrete description above and the conceptual one agree.   More precisely, we have $A_\Cc\text{-mod}\bt_\Cc\Mc=A_\Mc\text{-mod}$, where the former has the Hochschild homology definition.
\end{remark}

\subsection{A note on the usual Chern character}
The description of the Chern character in Definition \ref{def:chern1} is a categorification of the usual one.  On the other hand it ``extends down".  More precisely, we can continue the TFT patterns from above to the consideration of what happens to a suitable $F\in Fun_\Cc(\Mc,\Nc)_\Dc$ where $\Mc$ and $\Nc$ are admissible $(\Cc,\Dc)$-bimodules.  It should yield an associated natural transformation $$HH(F)^*:HH(\Nc)^*\to HH(\Mc)^*.$$

In the case of a suitable $M\in\bimod_\Cc(A,B)$ for $A,B\in Alg(\Cc)$ and the resulting $M\ot_B-:B_\Cc\mmm\to A_\Cc\mmm$ we would get $$HH(M\ot_B-)^*:HH(A_\Cc\mmm)^*\to HH(B_\Cc\mmm)^*$$ and so the usual Chern character of $M$, i.e., $$ch(M)\in RHom^0_{HH(\Cc)^{S^1}}(ch(A),ch(B))$$ where $ch(M)=HH(M\ot_B-)^*k$.  Note that if $N$ is a suitable right $B$-module in $\Cc$ then $ch(N)\in RHom^0_{HH(\Cc)^{S^1}}(ch(1),ch(B))=Ext^0_{mixed}(k, u_*ch(B))$, with $u_*$ the right adjoint of $u^*:\vect\to\Cc$ (unit inclusion).  We remark that despite appearing to, at first glance, be concentrated in degree $0$, the Chern character  $ch(N)$ is spread out as expected (see Remark \ref{ch1comp}). We will not address the above in this paper as it is not relevant for our present purposes.

\subsection{Does $\Mc_* E$ exist?}\label{ss:lower}

Above we saw that given $A\in Alg(\Cc)$ we have a corresponding $\Mc^* A\in Alg(\Dc)$ for $\Mc$ an admissible $(\Cc,\Dc)$-bimodule category.  We can view $-\bt_\Cc \Mc$ as a $2$-functor from right $\Cc$-modules, i.e., $\text{Mod}\Cc$,  to $\text{Mod}\Dc$ that preserves admissibility.  As such it has a right adjoint $Fun(\Mc,-)_\Dc$.  Observe that for $E\in Alg(\Dc)$ we have $Fun(\Mc,E_\Dc\mmm)_\Dc\simeq\bimod_\Dc(E,B)$ with the right $\Cc$-action via $\Cc\to\bimod_\Dc B$.  Unfortunately defining $\Mc_* E$ to satisfy $(\Mc_* E)_\Cc\mmm\simeq Fun(\Mc,E_\Dc\mmm)_\Dc$ need not work as such an algebra need not exist in $\Cc$.

We can see the problem more clearly in the special case of $\Mc=\Dc$, i.e., we have a monoidal $f^*:\Cc\to\Dc$ with a right adjoint $f_*$. In this case $Fun(\Mc,-)_\Dc$ is simply the pullback of right $\Dc$-modules to right $\Cc$-modules via $f^*$.  Now for $A\in Alg(\Cc)$ the corresponding algebra in $\Dc$ is $f^*(A)$ and furthermore, we observe that for $E\in Alg(\Dc)$ the object $f_* (E)$ also has an algebra structure via $f_*(E)\ot f_*(E)\to f_*(E\ot E)\to f_*(E)$.  What we would like is to have an equivalence $\bimod_\Dc(E,f^*(A))\simeq\bimod_\Cc(f_*(E),A)$.

Let us take $\Cc=\hmod$ (the category of modules over a Hopf algebra), $\Dc=\vect$ (the category of vector spaces), and  $f^*$ the fiber functor. Take $A=1$ and $E=1$, then we have $\bimod_\vect(1,1)=\vect$ while $\bimod_\hmod(H^*,1)=H^*_\hmod\mmm$.  They are almost equivalent, i.e., if $H$ is finite dimensional, but not otherwise.   Another example is considered in Section \ref{ss:qcx}.

On the other hand there are examples where it is possible to define $\Mc_*E$, but it is not $f_*E$.  Namely, take $\epsilon^*:\vect\to\hmod$ to be the monoidal unit, with right adjoint $V\mapsto V^H$.  We observe that for any $E\in Alg(\hmod)$ the right $\hmod$-module $E_\hmod\mmm$ is equivalent, as a right $\vect$-module to $E\rtimes H\mmm$, where $(a\ot x)(b\ot y)=a(x^1b)\ot x^2y$, and not in general to $E^H\mmm$; thus \begin{equation}\label{eq:push}\epsilon_*E=E\rtimes H.\end{equation}  This case considers $\hmod$ as an admissible $(\vect,\hmod)$-bimodule category. Note that we can also consider it as an admissible $(\hmod,\vect)$-bimodule category since $\hmod=H_\vect\mmm$ and the resulting map $\hmod\to\bimod H$ sending $V$ to $V\ot H$ with $x(v\ot h)y=x^1v\ot x^2hy$ has a right adjoint $S\mapsto ad(S)$ where the action on the latter is via $h\cdot s=h^1sS(h^2)$.

Recall that $HH(\Mc\bt_\Dc\Nc)^*\simeq HH(\Nc)^*HH(\Mc)^*$, i.e., compatibility of $HH(-)$ with composition, implies that $HH(\Mc)^*ch(A)\simeq ch(\Mc^*A)$ so that $ch(-)$ is compatible with pullbacks.  If we assume the compatibility of $HH(-)$ with internal Homs when they exist, i.e., $$HH(Fun(\Nc,\Tc)_\Ec)^*\simeq HH(\Nc)_*HH(\Tc)^*$$ then we obtain the compatibility of $ch(-)$ with pushforwards.  More precisely, defining $\Mc_*E$ by $(\Mc_*E)_\Cc\mmm=Fun(\Mc,E_\Dc\mmm)_\Dc$, if possible, we have  $ch(\Mc_*E)=HH(Fun(\Mc,E_\Dc\mmm)_\Dc)^*k$ while the latter is equivalent to $HH(\Mc)_*HH(E_\Dc\mmm)^*k$ and so to $HH(\Mc)_*ch(E)$, i.e., \begin{equation}\label{eq:comp2}HH(\Mc)_*ch(E)\simeq ch(\Mc_*E).\end{equation}

\section{Hochschild and cyclic homology categories}\label{hochschildhomology}
Here we concern ourselves with understanding the naive Hochschild and cyclic homologies of monoidal categories.  We make certain additional assumptions, satisfied in our intended setting, that allow for a description of Hochschild homology as the category of modules over a monad.  In the cyclic case the monad is equipped with a central element that supplies the $S^1$-action.

Let $\Mc$ be a bimodule category over $\Cc$ such that both actions: $\Delta^*_l:\Cc\bt\Mc\to\Mc$ and $\Delta^*_r:\Mc\bt\Cc\to\Mc$  possess right adjoints: $\Delta_{l*}$ and $\Delta_{r*}$ respectively. We also require that the unit and the product functors in $\Cc$ possess right adjoints as well.  

\begin{definition}
We will say that a $\Cc$-bimodule $\Mc$, as above, has \emph{right adjoints}.
\end{definition}

Let us begin with  its Hochschild homology category $HH(\Cc,\Mc)$.  In the usual way, using the left and right actions together with the unit and product of $\Cc$ we may view $\Mc\boxtimes\Cc^{\boxtimes\bullet}$ as a simplicial object  in the $2$-category of categories \emph{with functors possessing right adjoints}.  All of the subtleties of tensor products of categories themselves are avoided \emph{in our case} as they all arise as categories of modules over some algebra, see Section \ref{specialcase}.

\begin{remark}\label{r:aydmod}
We can also consider the setting of categories with functors possessing left adjoints.  This can arise in the situation that we consider in Section \ref{specialcase}. Namely, given a map of algebras $f:A\to B$ the forgetful functor $f^*$ from $B$-modules to $A$-modules has both a right, $f_*=Coinduction$,  and a left, $f_!=Induction$,  adjoint.  We mention that the former case is what we consider, and it leads to anti-Yetter-Drinfeld \emph{contramodules} via a certain monad.  The latter would lead to the case of anti-Yetter-Drinfeld \emph{modules}  through the consideration of the co-monad which is the left adjoint to our monad that we will call $\Ac$ below.
\end{remark}

Let $\Delta$ denote the simplex category (finite linearly ordered sets with order preserving maps), then $$HH(\Cc,\Mc)=\indlimD\,\Mc\boxtimes\Cc^{\boxtimes\bullet},$$ however we may use the right adjoints of the above structure functors to view $\Mc\boxtimes\Cc^{\boxtimes\bullet}$ as a co-simplicial object in categories with functors possessing \emph{left adjoints} so that we have $$HH(\Cc,\Mc)=\projlimD\,\Mc\boxtimes\Cc^{\boxtimes\bullet};$$the latter is simpler to describe explicitly.

Generally if we have $\Cc_\bullet$ ``fibered" over some index category $X$,  then the inverse limit consists of the following data: for every $x\in X$ we are given an $M_x\in \Cc_x$ together with isomorphisms  $\tau_f:f(M_x)\to M_y$ for any $f:x\to y$.  Furthermore, if $g:y\to z$ then $\tau_g g(\tau_f)=\tau_{gf}$ and $\tau_{Id_x}=Id_{M_x}$.

This data in the case of the simplex index category reduces to an $M\in\Cc_0$ together with an isomorphism (the $\delta$'s and $s$'s below are the usual faces and degeneracies) in $\Cc_1$: $$\tau: \delta_0(M)\to\delta_1(M),$$  subject to the unit condition in $\Cc_0$: $$s_0(\tau)=Id_M,$$ and the associativity condition in $\Cc_2$: \begin{equation}\label{associativity}\delta_2(\tau)\delta_0(\tau)=\delta_1(\tau).\end{equation}  Naturally, $M_i\in\Cc_i$ is obtained from $M$ via $M_i=\delta_{i}\delta_{(i-1)}\cdots \delta_{1}M$, i.e., $M_i=low(M)$ where $low:[0]\to[i]$ maps the point to the smallest element.

\subsection{The \propA assumption}
In order to realize the Hochschild homology category as the category of modules over a monad we require additional assumptions that necessitate the Definition \ref{def:base}.  More precisely, we wish to describe the inverse limit (that gives $HH(\Cc,\Mc)$ in our intended setting) as the category of modules over a monad on $\Mc$. 

Recall the general setting of $\Cc_\bullet$, a co-simplicial object in categories (with left adjoints). We will denote the faces adjoint pairs by $(\delta^*,\delta_*)$ and the degeneracies by $(s^*,s_*)$.  

We have $\delta_1^*\delta_0^*=\delta_0^*\delta_2^*:\Cc_2\to\Cc_0$ and so obtain by adjunction (see Section \ref{basechangeforalgebras}):\begin{equation}\label{basechange1}\delta_2^*\delta_{0*}\to\delta_{0*}\delta_1^*.\end{equation} While $\delta_1^*\delta_1^*=\delta_1^*\delta_2^*:\Cc_2\to\Cc_0$ yields:\begin{equation}\label{basechange2}\delta_2^*\delta_{1*}\to\delta_{1*}\delta_1^*.\end{equation}  Furthermore, from $\delta^*_0\delta^*_3=\delta^*_2\delta^*_0:\Cc_3\to\Cc_1$ we have: \begin{equation}\label{basechange3}\delta^*_3\delta_{0*}\to\delta_{0*}\delta^*_2.\end{equation}

\begin{definition}\label{def:base}
We say that $\Cc_\bullet$ has \propA if  \eqref{basechange1},  \eqref{basechange2}, and \eqref{basechange3} are isomorphisms.  We say that a $\Cc$-bimodule $\Mc$, with right adjoints, has \propA if $\Cc_\bullet=\Mc\bt\Cc^{\bt\bullet}$ has \propA\!\!.
\end{definition}

\begin{definition}
Let $\Ac=\delta^*_1\delta_{0*}\in End(\Cc_0)$.  This is our monad.
\end{definition}

Define the multiplication on $\Ac$ as follows.  First observe that we have a composition of isomorphisms: \begin{equation}\label{taum}\tau_m:\delta_{0*}\delta^*_1\delta_{0*}\leftarrow\delta^*_2\delta_{0*}\delta_{0*}=
\delta^*_2\delta_{1*}\delta_{0*}\rightarrow \delta_{1*}\delta^*_1\delta_{0*},\end{equation} where the first arrow is provided by \eqref{basechange1} (and the \propA assumption), the second equality is due to $\delta_{0*}\delta_{0*}=\delta_{1*}\delta_{0*}$, and the third arrow is \eqref{basechange2}.  The multiplication $m:\Ac^2\to\Ac$ is obtained from $\tau_m$ by adjunction.

\begin{lemma}\label{ass}
Let $M\in\Cc_0$ with $\tau:\delta_{0*}M\to\delta_{1*}M$. Equip $M$ with $a:\Ac(M)\to M$ via adjunction from $\tau$. 
Then it is an action if and only if $\delta_{2*}(\tau)\delta_{0*}(\tau)=\delta_{1*}(\tau)$.
\end{lemma}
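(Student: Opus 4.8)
The plan is to translate the two conditions through the adjunction $(\delta^*_1\delta_{0*},\ \delta_{0*}\delta^*_1)$ — or more precisely through the string of adjunctions relating $\Ac=\delta^*_1\delta_{0*}$, $\Ac^2$, and the unit/counit data — and to show that the action axioms for $a:\Ac(M)\to M$ become, verbatim, the stated cocycle-type identity for $\tau$. Concretely, $a$ is obtained from $\tau:\delta_{0*}M\to\delta_{1*}M$ by taking adjoints across $\delta_{0*}\dashv \delta^*_1$ in the appropriate variance (so $\tau$ and $a$ determine each other bijectively, and likewise for all whiskered/composed versions). The unit axiom $a\circ\eta_M=\Id_M$ should fall out immediately from the unit condition $s_0(\tau)=\Id_M$ recorded just before Definition \ref{def:base}, using the cosimplicial identities $s_0\delta_0=s_0\delta_1=\Id$; I would dispatch this first since it is essentially formal. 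The content is the associativity axiom $a\circ\Ac(a)=a\circ m_M$.

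First I would unwind $m:\Ac^2\to\Ac$: by construction it is adjoint to $\tau_m$ from \eqref{taum}, which is the composite $\delta_{1*}\delta^*_1\delta_{0*}\xleftarrow{\ \eqref{basechange2}\ }\delta^*_2\delta_{1*}\delta_{0*}=\delta^*_2\delta_{0*}\delta_{0*}\xrightarrow{\ \eqref{basechange1}\ }\delta_{0*}\delta^*_1\delta_{0*}$, all arrows being isomorphisms by \propA. So I would next compute both $a\circ\Ac(a)$ and $a\circ m_M$ as maps $\Ac^2(M)\to M$ and push both across the adjunction to maps $\delta_{0*}\delta_{0*}M \to \delta_{1*}\delta_{1*}M$ (the ``doubly lowered'' object living in $\Cc_2$, since $\delta_{0}\delta_{0}=\delta_{1}\delta_{0}$ and $\delta_{1}\delta_{1}=\delta_{1}\delta_{2}$). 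Under this transfer, $\Ac(a)$ becomes $\delta_{0*}(\tau)$ precomposed/postcomposed with the base-change isomorphisms \eqref{basechange1}, \eqref{basechange2}, \eqref{basechange3}; the outer $a$ contributes another $\tau$; and $m_M$ contributes the two factors of $\tau_m$. The claim is that after canceling the base-change isomorphisms — here \eqref{basechange3} enters, governing the interaction of $\delta_3$ with $\delta_0$ over $\Cc_1$, which is exactly the coherence needed to identify the two routes through $\Cc_3$ — one side reads $\delta_{2*}(\tau)\circ\delta_{0*}(\tau)$ and the other reads $\delta_{1*}(\tau)$, giving the asserted equivalence.

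The main obstacle is bookkeeping: keeping straight the variance of each adjunction, the precise placement of each base-change $2$-cell \eqref{basechange1}–\eqref{basechange3}, and verifying that all the coherences assemble so that exactly the three \propA isomorphisms (and no others) appear and cancel. In particular one must check that the simplicial identity $\delta^*_0\delta^*_3=\delta^*_2\delta^*_0$ underlying \eqref{basechange3} is the one that reconciles the ``$\Ac$ applied to $a$'' nesting with the ``multiply first'' nesting — this is where a naively-written computation is most likely to stall, and where drawing the relevant square of functors $\Cc_3\to\Cc_0$ and its mate is essential. Once the diagram is set up correctly, the equivalence between the $\Ac$-action axioms and the identity $\delta_{2*}(\tau)\delta_{0*}(\tau)=\delta_{1*}(\tau)$ is forced, because each step is an isomorphism and the correspondence $\tau\leftrightarrow a$ is bijective on all the relevant hom-sets; I would not expect any genuinely nontrivial inequality to survive, only the need to present the cancellations cleanly.
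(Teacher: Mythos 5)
Your proposal follows essentially the same route as the paper's proof: transfer the associativity square for $a$ across the adjunctions $\delta_1^*\dashv\delta_{1*}$ and $\delta_2^*\dashv\delta_{2*}$, unwind $\tau_m$ from \eqref{taum} via \eqref{basechange1} and \eqref{basechange2}, and identify the resulting edges of the diagram in $\Cc_2$ with $\delta_{0*}(\tau)$, $\delta_{1*}(\tau)$, and $\delta_{2*}(\tau)$ using the cosimplicial identities $\delta_{0*}\delta_{0*}=\delta_{1*}\delta_{0*}$ and $\delta_{2*}\delta_{1*}=\delta_{1*}\delta_{1*}$. One small correction: \eqref{basechange3} plays no role in this lemma (the computation never leaves $\Cc_2$); it is needed only for the associativity of the monad $\Ac$ itself, and likewise the unit axiom is not part of this equivalence but is handled separately.
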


\begin{proof}
Note that  $$\xymatrix{\delta^*_1\delta_{0*}\delta^*_1\delta_{0*}M\ar[rr]^{\delta^*_1\delta_{0*}(a)}\ar[d]^m & & \delta^*_1\delta_{0*}M\ar[d]^a\\
\delta^*_1\delta_{0*}M\ar[rr]^a & & M}$$ commutes if and only if $$\xymatrix{\delta_{0*}\delta^*_1\delta_{0*}M\ar[rr]^{\delta_{0*}(a)}\ar[d]^{\tau_m} & & \delta_{0*}M\ar[d]^\tau\\
\delta_{1*}\delta^*_1\delta_{0*}M\ar[rr]^{\delta_{1*}(a)} & & \delta_{1*}M}$$ commutes.  We can expand the vertical $\tau_m$ of the latter diagram into $$\xymatrix{\delta^*_2\delta_{0*}\delta_{0*} M\ar[r]\ar[d]^= & \delta_{0*}\delta^*_1\delta_{0*}M\ar[rr]^{\delta_{0*}(a)} & & \delta_{0*}M\ar[d]^\tau\\
\delta^*_2\delta_{1*}\delta_{0*}M\ar[r] & \delta_{1*}\delta^*_1\delta_{0*}M\ar[rr]^{\delta_{1*}(a)} & & \delta_{1*}M}$$ which commutes if and only if $$\xymatrix{\delta_{0*}\delta_{0*} M\ar[r]\ar[d]^= & \delta_{2*}\delta_{0*}\delta^*_1\delta_{0*}M\ar[rr]^{\delta_{2*}\delta_{0*}(a)} & & \delta_{2*}\delta_{0*}M\ar[d]^{\delta_{2*}(\tau)}\\
\delta_{1*}\delta_{0*}M\ar[r] & \delta_{2*}\delta_{1*}\delta^*_1\delta_{0*}M\ar[rr]^{\delta_{2*}\delta_{1*}(a)} & & \delta_{2*}\delta_{1*}M}$$ commutes.  Observe that under the identification $\delta_{2*}\delta_{0*}=\delta_{0*}\delta_{1*}$ the top row becomes $\delta_{0*}\delta_{0*} M\to\delta_{0*}\delta_{1*}\delta^*_1\delta_{0*}M\stackrel{\delta_{0*}\delta_{1*}(a)}{\longrightarrow}\delta_{0*}\delta_{1*}M$, i.e., $\delta_{0*}(\delta_{0*} M\to\delta_{1*}\delta^*_1\delta_{0*}M\stackrel{\delta_{1*}(a)}{\longrightarrow}\delta_{1*}M)$, which is $\delta_{0*}(\delta_{0*}M\stackrel{\tau}{\to}\delta_{1*}M)$, so $\delta_{0*}(\tau)$.  Similarly, under the identification $\delta_{2*}\delta_{1*}=\delta_{1*}\delta_{1*}$ the bottom row becomes $\delta_{1*}(\tau)$ and we are done.

\end{proof}

Consider $u:Id\to\Ac$ given by \begin{equation}\label{theu}u:Id=\delta_1^*s_0^*s_{0*}\delta_{0*}\to\delta_1^*\delta_{0*}\end{equation} using the evaluation $s_0^*s_{0*}\to Id$. We have the following easy lemma (with proof as in Lemma \ref{ass}).

\begin{lemma}\label{unit}
Let $M\in\Cc_0$ with $\tau:\delta_{0*}M\to\delta_{1*}M$.  Then $s_{*0}(\tau)=Id$ if and only if $M\stackrel{u}{\to}\Ac(M)\stackrel{a}{\to}M$ is $Id$.
\end{lemma}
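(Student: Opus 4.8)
The plan is to imitate the structure of the proof of Lemma \ref{ass}, but now everything degenerates because one of the two maps is a degeneracy $s_0$ rather than a face, and the relevant coherence reduces to the triangle identities for adjunctions. Recall that $a:\Ac(M)=\delta_1^*\delta_{0*}M\to M$ is obtained by adjunction from $\tau:\delta_{0*}M\to\delta_{1*}M$, i.e., $a$ is the composite $\delta_1^*\delta_{0*}M\xrightarrow{\delta_1^*(\tau)}\delta_1^*\delta_{1*}M\xrightarrow{\ \ev\ }M$, using the counit $\delta_1^*\delta_{1*}\to Id$. Dually, $\tau$ is recovered from $a$ as $\delta_{0*}M\xrightarrow{\eta}\delta_{1*}\delta_1^*\delta_{0*}M=\delta_{1*}\Ac(M)\xrightarrow{\delta_{1*}(a)}\delta_{1*}M$, using the unit $Id\to\delta_{1*}\delta_1^*$. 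The unit condition we must match is $M\xrightarrow{u}\Ac(M)\xrightarrow{a}M$ equals $Id_M$, and the claim is that this is equivalent to $s_{0*}(\tau)=Id_M$ (note that $s_0:[1]\to[0]$ so $s_{0*}:\Cc_1\to\Cc_0$, and $s_0^*\delta_0=s_0^*\delta_1=Id$ after the collapse, which is what makes $s_{0*}(\tau)$ an endomorphism of $M$).

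First I would unwind the definition of $u$ in \eqref{theu}: $u$ is the composite $M=\delta_1^*s_0^*s_{0*}\delta_{0*}M\to\delta_1^*\delta_{0*}M=\Ac(M)$, where the arrow is $\delta_1^*$ applied to the counit $s_0^*s_{0*}\to Id$ evaluated at $\delta_{0*}M$, combined with the simplicial identity $s_0^*\delta_0^*=Id$ (or rather its left-adjoint mate $\delta_{0*}s_{0*}=$ ... ) — I need to be careful which identity gives $\delta_1^*s_0^*s_{0*}\delta_{0*}=Id$: it is $s_0\delta_1=Id_{[0]}$, hence $\delta_1^*s_0^*=Id$ on $\Cc_0$, combined with $s_0\delta_0=Id_{[0]}$, hence $s_{0*}\delta_{0*}=$ left adjoint mate, but actually one wants $s_0^*$ applied suitably; the point is that after composing with the counit $s_0^*s_{0*}\to Id$ we land back in $\Cc_0$ at $M$. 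Then $a\circ u$ is a composite that I would draw as a single diagram: start at $M$, go up via (the $\delta_1^*$ of the) counit of $(s_0^*,s_{0*})$ into $\Ac(M)$, then apply $\delta_1^*(\tau)$, then the counit of $(\delta_1^*,\delta_{1*})$ back to $M$.

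The key step is to compare this with $s_{0*}(\tau)$. Applying $s_{0*}$ to $\tau:\delta_{0*}M\to\delta_{1*}M$ gives $s_{0*}\delta_{0*}M\to s_{0*}\delta_{1*}M$; using $s_0\delta_0=Id=s_0\delta_1$ both sides are (naturally isomorphic to) $M$, so $s_{0*}(\tau)$ is an endomorphism of $M$. I would now insert the adjunction unit/counit for $(s_0^*,s_{0*})$ to translate between ``$\delta_1^*$ of the counit'' pictures and ``$s_{0*}$ of $\tau$'' pictures: the naturality square of the counit $s_0^*s_{0*}\to Id$ applied to the morphism $\tau$, together with the triangle identities, should exhibit $a\circ u$ and $s_{0*}(\tau)$ as two legs of a commuting diagram, so each determines the other and $a\circ u=Id$ iff $s_{0*}(\tau)=Id$. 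Concretely: naturality of the counit $\varepsilon:s_0^*s_{0*}\to Id$ at $\tau$ gives a square relating $\tau$ to $s_0^*s_{0*}(\tau)$; pre- and post-composing with the unit/counit of $(s_0^*,s_{0*})$ and using the simplicial collapses $s_0\delta_0=s_0\delta_1=Id$ turns this into the assertion that $a\circ u$ (read off from the $\delta_1^*\delta_{0*}$ side) coincides with $s_{0*}(\tau)$ (read off from the collapsed side). This is the computational heart, and it is genuinely just a chase with two adjunction triangle identities plus the two simplicial identities $s_0\delta_0=Id$, $s_0\delta_1=Id$.

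I expect the main obstacle to be purely bookkeeping: keeping straight which of the four relevant (co)units is being used at each stage — the counit of $(\delta_1^*,\delta_{1*})$ that defines $a$ from $\tau$, the unit of $(\delta_0^*,\delta_{0*})$ hidden in $\Ac$, and the unit and counit of $(s_0^*,s_{0*})$ appearing in $u$ — and ensuring the various canonical isomorphisms coming from $s_0\delta_0=s_0\delta_1=Id_{[0]}$ (and hence $s_{0*}\delta_{0*}\cong Id\cong s_{0*}\delta_{1*}$ by the mate construction, which is where \propA-type compatibility implicitly enters) are applied consistently. As in Lemma \ref{ass}, no creativity beyond a diagram chase is needed, and the author explicitly flags this (``easy lemma (with proof as in Lemma \ref{ass})''); so I would present it as: write $a=\ev\circ\delta_1^*(\tau)$ and $u$ as the $\delta_1^*$ of the $(s_0^*,s_{0*})$-counit (modulo the simplicial collapse), form the single commuting diagram obtained from naturality of the $(s_0^*,s_{0*})$-counit at $\tau$ plus the triangle identities, and read off the equivalence $a\circ u=Id_M\iff s_{0*}(\tau)=Id_M$.
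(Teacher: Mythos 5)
Your argument is correct and is exactly the adjunction/naturality diagram chase the paper intends when it writes ``easy lemma (with proof as in Lemma \ref{ass})'': naturality of the counit $s_0^*s_{0*}\to Id$ at $\tau$, the counit identity for the composite adjunction, and the collapses $s_0\delta_0=s_0\delta_1=Id_{[0]}$ together give $a\circ u=s_{0*}(\tau)$, from which the equivalence is immediate. One minor correction: the identifications $s_{0*}\delta_{0*}\simeq Id\simeq s_{0*}\delta_{1*}$ come from uniqueness of right adjoints applied to the strict identities $\delta_i^*s_0^*=Id$, not from \propA (which the lemma does not assume).
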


\subsection{Associativity and unitality of $\Ac$}
In order to demonstrate that $\Ac$ is  unital and associative it is helpful to consider cospans in the simplex category.  The idea is that $\Ac$ is actually defined (is a $1$-morphism from $[0]$ to itself) in the $2$-category $2\Delta$ obtained from $\Delta$ by freely adding left adjoints see \cite{freeadd} for example.  In particular, when pushouts are available we can compose cospans and obtain $2$-morphisms in  $2\Delta$ via maps of cospans.  We observe that for $n\geq 1$ we have a pushout square (we will need $n=1,2$): $$\xymatrix{[n-1]\ar[d]^{\delta_n}\ar[r]^{\delta_0}&[n]\ar[d]^{\delta_{n+1}}\\
[n]\ar[r]^{\delta_0}& [n+1]}$$ which allows us to compose as follows:$$\xymatrix{
[0]\ar[dr]^{\delta_0}&& [0]\ar[dr]^{\delta_0}\ar[dl]^{\delta_1}&& [0]\ar[dr]^{\delta_0}\ar[dl]^{\delta_1}&& [0]\ar[dl]^{\delta_1}\\
&[1]\ar[dr]^{\delta_0} && [1]\ar[dr]^{\delta_0}\ar[dl]^{\delta_2} && [1]\ar[dl]^{\delta_2} &\\
&& [2]\ar[dr]^{\delta_0} && [2]\ar[dl]^{\delta_3} &&\\
&&& [3] &&&
}$$ so that when considering $$\Ac=\delta^*_1\delta_{0*}=[0]\stackrel{\delta_0}{\to}[1]\stackrel{\delta_1}{\leftarrow}[0]$$ we have that $$\Ac^2=[0]\stackrel{high}{\to}[2]\stackrel{low}{\leftarrow}[0]$$ and the product $m:\Ac^2\to\Ac$ is given by the unique map of the cospans $\delta_1:[1]\to[2]$, where one should note the \emph{reversal} of the arrows specifying the $2$-morphisms. 

\begin{lemma}
The $1$-loop $\Ac$ at $[0]$ in $2\Delta$ is associative.
\end{lemma}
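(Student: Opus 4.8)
The plan is to work entirely inside the cospan calculus for $2\Delta$ set up just above the statement, where a $1$-endomorphism of $[0]$ is a cospan under $[0]\sqcup[0]$, composition of $1$-morphisms is pushout, and a $2$-morphism is a map of cospans (with the noted reversal of the arrow between the apices). So the whole assertion will reduce to a short manipulation of coface maps.

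First I would record the cospans involved. We have $\Ac=\bigl([0]\xrightarrow{\delta_0}[1]\xleftarrow{\delta_1}[0]\bigr)$, and applying the pushout squares displayed before the statement (the cases $n=1$ and $n=2$) gives $\Ac^2=\bigl([0]\xrightarrow{high}[2]\xleftarrow{low}[0]\bigr)$ and $\Ac^3=\bigl([0]\xrightarrow{high}[3]\xleftarrow{low}[0]\bigr)$, where $high$ and $low$ are the inclusions of the top and bottom vertices respectively. The product $m\colon\Ac^2\to\Ac$ is, as already noted, the map of cospans $\delta_1\colon[1]\to[2]$.

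The one step that takes real care is to identify the two whiskerings $m\ast 1_{\Ac}$ and $1_{\Ac}\ast m$, both $2$-morphisms $\Ac^3\to\Ac^2$. Presenting $\Ac^3$ as $\Ac^2\ast\Ac$ (resp. as $\Ac\ast\Ac^2$) via the relevant pushout, the apex $[2]$ of the target $\Ac\ast\Ac$ includes into the apex $[3]$ of $\Ac^3$; chasing that pushout square, and keeping track of the reversal convention, shows that $m\ast 1_{\Ac}$ is represented by the coface $\delta_2\colon[2]\to[3]$, while $1_{\Ac}\ast m$ is represented by $\delta_1\colon[2]\to[3]$. I expect exactly this bookkeeping — getting the two cofaces $[2]\to[3]$ correct and consistently handling the direction reversal — to be the main obstacle; everything else is formal.

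Given this, the two associativity composites $\Ac^3\to\Ac$ are represented by the composite cospan maps $\delta_2\circ\delta_1$ and $\delta_1\circ\delta_1$ from $[1]$ to $[3]$, and these coincide by the cosimplicial identity $\delta_2\delta_1=\delta_1\delta_1$; hence $\Ac$ is associative. Alternatively, and more conceptually, one can skip even this last identification: any $2$-morphism $\Ac^3\to\Ac$ is a map of cospans from the apex $[1]$ of $\Ac$ to the apex $[3]$ of $\Ac^3$ compatible with the legs $low,high\colon[0]\to[3]$, so it must send the vertex $0$ to $low(0)$ and the vertex $1$ to $high(0)$; since an order-preserving map of finite ordinals is determined by its values on vertices, there is at most one such map, and the two composites are therefore forced to agree.
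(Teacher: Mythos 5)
Your proof is correct, and your closing ``more conceptual'' alternative is precisely the paper's own argument: a $2$-morphism $\Ac^3\to\Ac$ is an order-preserving cospan map $[1]\to[3]$ forced to send $0\mapsto 0$ and $1\mapsto 3$, hence unique. The explicit identification of the two whiskerings as $\delta_2,\delta_1\colon[2]\to[3]$ and the appeal to the cosimplicial identity $\delta_2\delta_1=\delta_1\delta_1$ is an accurate, more detailed route to the same conclusion.
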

\begin{proof}
The two $2$-morphisms $m\circ Id$ and $Id\circ m$ from $\Ac^3$ to $\Ac$ coincide since there is only a unique map from $[0]\stackrel{high}{\to}[3]\stackrel{low}{\leftarrow}[0]$ to $[0]\stackrel{\delta_0}{\to}[1]\stackrel{\delta_1}{\leftarrow}[0]$, i.e., $f:[1]\to [3]$ with $(0,1)\mapsto (0,3)$.
\end{proof}

Similarly, the unit $u$ is given by $s_0:[1]\to [0]$ from identity represented as $[0]=[0]=[0]$ to $\Ac$.

\begin{lemma}
The map $u$ is the unit of $\Ac$ in $2\Delta$.
\end{lemma}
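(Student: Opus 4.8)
The plan is to mimic the proof of the preceding associativity lemma, working entirely inside the $2$-category $2\Delta$ rather than with explicit formulas for the monad $\Ac$ on $\Cc_0$. Recall that $\Ac$ is represented by the cospan $[0]\stackrel{\delta_0}{\to}[1]\stackrel{\delta_1}{\leftarrow}[0]$, and that $u$ is the $2$-morphism $u:\Id\to\Ac$ given by the map of cospans $s_0:[1]\to[0]$ from the degenerate cospan $[0]=[0]=[0]$ (representing $\Id$) to the cospan for $\Ac$, where one must remember that $2$-morphisms in $2\Delta$ are given by cospan maps with the \emph{reversed} direction. So I need to verify the two triangle identities $m\circ(u\circ\Id_\Ac)=\Id_\Ac=m\circ(\Id_\Ac\circ u)$ as $2$-morphisms $\Ac\to\Ac$.

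First I would spell out the composite cospan representing $u\circ\Id_\Ac$ (i.e. $\Ac=\Id\cdot\Ac$ with $u$ applied to the left factor). Using the pushout square with $n=1$, the composite of the $\Id$-cospan $[0]=[0]=[0]$ with the $\Ac$-cospan $[0]\stackrel{\delta_0}{\to}[1]\stackrel{\delta_1}{\leftarrow}[0]$ is again $[0]\stackrel{\delta_0}{\to}[1]\stackrel{\delta_1}{\leftarrow}[0]$ (composing with the trivial cospan changes nothing), and under this identification the $2$-morphism $u\circ\Id_\Ac:\Ac\to\Ac^2=\Ac$ is induced by $s_0:[1]\to[1]$ ... precisely, the relevant cospan map landing in the cospan $[0]\stackrel{high}{\to}[2]\stackrel{low}{\leftarrow}[0]$ representing $\Ac^2$. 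Then $m:\Ac^2\to\Ac$ is the cospan map $\delta_1:[1]\to[2]$. The point is that $m\circ(u\circ\Id_\Ac)$ is represented by a composite $[1]\to[1]\to[2]$ of maps of cospans, which is then compared with the identity cospan map $\Id_{[1]}$ representing $\Id_\Ac$. By the universal property — there is a \emph{unique} map of cospans from the $\Ac$-cospan to itself, namely $\Id_{[1]}$ — the composite must equal $\Id_\Ac$. The same argument with $n=1$ and $s_0$ applied on the other side handles $m\circ(\Id_\Ac\circ u)$.

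Concretely, the proof reduces to the observation: the cospan $[0]\stackrel{\delta_0}{\to}[1]\stackrel{\delta_1}{\leftarrow}[0]$ has no nontrivial self-maps of cospans (a cospan map must send $0\mapsto 0$ via the left leg and the other endpoint-image via the right leg, forcing it to be $\Id_{[1]}$), so any $2$-endomorphism of $\Ac$ built from composites of structural cospan maps is forced to be $\Id_\Ac$. One checks that $m\circ(u\circ\Id_\Ac)$ and $m\circ(\Id_\Ac\circ u)$ are indeed such composites — this is where the explicit pushout squares and the bookkeeping of which face/degeneracy map realizes each $2$-cell is needed — and concludes.

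The main obstacle, and the only place where care is required, is the bookkeeping of arrow reversal: since left adjoints were \emph{freely added}, a $1$-morphism like $\Ac$ is a cospan in $\Delta$ but the $2$-morphisms $u$ and $m$ run \emph{opposite} to the cospan maps that represent them, and horizontal composition of $2$-morphisms corresponds to composing cospan maps over the pushout. I would need to make sure that when I whisker $u$ by $\Id_\Ac$ on either side and then postcompose with $m$, the resulting cospan map is computed over the correct pushout $[1]\stackrel{\delta_0,\ \delta_1}{\longleftarrow}[0]\stackrel{}{\longrightarrow}\cdots$ square with $n=1$, and that the degeneracy $s_0$ appearing in $u$ is whiskered to the correct face of $[2]$. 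Once the diagrams are set up correctly, uniqueness of the cospan self-map of $\Ac$ closes the argument with no computation.
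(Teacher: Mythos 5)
Your proposal is correct and follows the same route as the paper: both arguments reduce the triangle identities to the observation that the cospan $[0]\stackrel{\delta_0}{\to}[1]\stackrel{\delta_1}{\leftarrow}[0]$ admits only the identity as a self-map of cospans, so the composites $m\circ(u\circ\Id_{\Ac})$ and $m\circ(\Id_{\Ac}\circ u)$ are forced to be $\Id_{\Ac}$. The extra bookkeeping you describe about pushouts and arrow reversal is exactly what the paper leaves implicit.
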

\begin{proof}
Checking that it is a left and a right unit is checking that two cospan maps from $[0]\stackrel{\delta_0}{\to}[1]\stackrel{\delta_1}{\leftarrow}[0]$ to itself are both identity, yet there is no other map.

\end{proof}

\begin{corollary}\label{ismonad}
Assuming that \eqref{basechange1} and \eqref{basechange3} are isomorphisms, the endofunctor $\Ac$ of $\Cc_0$ is a monad.

\end{corollary}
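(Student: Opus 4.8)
The plan is to verify that the abstract $2$-categorical argument just given for $2\Delta$ transports, via the assumed isomorphisms, to an honest monad structure on the endofunctor $\Ac = \delta_1^*\delta_{0*}$ of $\Cc_0$. Concretely, I would proceed as follows.

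First I would recall the two lemmas immediately preceding: the $1$-loop $\Ac$ at $[0]$ in $2\Delta$ is associative, and $u\colon [0]=[0]=[0]\to\Ac$ (represented by $s_0\colon[1]\to[0]$) is its two-sided unit. These are purely combinatorial statements about cospan maps in $2\Delta$ and require no hypotheses. The point of the corollary is that the pseudofunctor $2\Delta\to \text{Cat}$ (sending $[n]$ to $\Cc_n$, a face map $f$ to $f^*$, and a formally-adjoined left adjoint to the corresponding right adjoint $f_*$) carries $\Ac$, $m$, and $u$ to the endofunctor $\Ac$ of $\Cc_0$, the natural transformation $m\colon\Ac^2\to\Ac$ built from $\tau_m$ in \eqref{taum}, and $u\colon\Id\to\Ac$ from \eqref{theu}. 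Second, I would observe that the only obstruction to having such a pseudofunctor (hence to transporting the monad axioms) is that the $2$-cells of $2\Delta$ realized by composing cospans along the pushout squares must actually land among \emph{isomorphisms} when we pass to the $1$-morphisms-with-left-adjoints picture — i.e., the canonical base-change comparison maps must be invertible. These base-change maps are exactly \eqref{basechange1}, \eqref{basechange2}, and \eqref{basechange3}. Third, I would check which of the three are actually needed: \eqref{basechange1} and \eqref{basechange2} enter directly in the definition of $\tau_m$ (hence of $m$) in \eqref{taum}, while \eqref{basechange3} is what makes the pushout $[1]\sqcup_{[0]}[1]=[2]$ and $[2]\sqcup_{[1]}[2]=[3]$ comparisons coherent at the level needed for associativity of $m$; the unitality of $u$ only uses the triangle identities for the adjunction $(s_0^*,s_{0*})$ and needs none of the three. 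Since \eqref{basechange2} is a formal consequence of \eqref{basechange1} and \eqref{basechange3} in this context (it is the base change for a square obtained by composing the ones giving \eqref{basechange1} and \eqref{basechange3}, up to the identifications $\delta_{0*}\delta_{0*}=\delta_{1*}\delta_{0*}$ used in \eqref{taum}), it suffices to assume \eqref{basechange1} and \eqref{basechange3} are isomorphisms, which is exactly the hypothesis of the corollary.

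Assembling these: with \eqref{basechange1} invertible, $m$ is well defined; with \eqref{basechange1} and \eqref{basechange3} invertible, the map-of-cospans computation in the associativity lemma (uniqueness of $f\colon[1]\to[3]$) translates into the equality $m\circ(m\ast\Id)=m\circ(\Id\ast m)$ of natural transformations $\Ac^3\to\Ac$, because every coherence isomorphism appearing in the translation is one of \eqref{basechange1}, \eqref{basechange3}, or an identification $\delta_{i*}\delta_{j*}=\delta_{i'*}\delta_{j'*}$ coming from a genuine equality of face maps; and the unit lemma gives $m\circ(u\ast\Id)=\Id=m\circ(\Id\ast u)$ using only the adjunction evaluation $s_0^*s_{0*}\to\Id$ from \eqref{theu}. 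Hence $(\Ac,m,u)$ is a monad on $\Cc_0$.

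The main obstacle, and the only place any real content sits, is the translation step: one must be careful that composing cospans in $2\Delta$ via the stated pushout squares corresponds, after applying $(-)^*$ and taking mates, precisely to pasting the base-change squares \eqref{basechange1} and \eqref{basechange3} — and that the ``reversal of arrows'' noted after the pushout diagram is exactly the passage to mates, so that a \emph{map} of cospans going one way becomes a natural transformation going the other way. Once one is confident that this dictionary is faithful (this is the standard fact that a pseudofunctor out of the free $2$-category-with-adjoints on $\Delta$ is the same as a functor on $\Delta$ landing in a $2$-category where the relevant mates are iso, cf. \cite{freeadd}), the associativity and unitality of $\Ac$ as a monad are immediate transcriptions of the two preceding lemmas, and no further computation is required.
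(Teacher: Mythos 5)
Your proposal follows essentially the same route as the paper: the corollary is proved by transporting the two preceding combinatorial lemmas about the $1$-loop $\Ac$ in $2\Delta$ to $\Cc_\bullet$, using the assumed base-change isomorphisms to interpret $\Ac^2$ and $\Ac^3$ as cospans. That is exactly what the paper does (its proof is a single sentence), and your discussion of the translation step is a reasonable expansion of it.

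One step of your reasoning is wrong, although it does not sink the argument. You justify omitting \eqref{basechange2} from the hypotheses by asserting that it is a formal consequence of \eqref{basechange1} and \eqref{basechange3}. That claim is unjustified -- the three comparison maps are the mates of three different commutative squares in $\Delta$, and the square underlying \eqref{basechange2} (built from $\delta_{1*}\delta_{1*}=\delta_{2*}\delta_{1*}$) is not obtained by pasting the pushout squares that give the other two -- and it is also unnecessary. The actual reason \eqref{basechange2} may be dropped is that it enters the composite \eqref{taum} only in the forward direction (the third arrow), so the product $m$ is defined by adjunction from $\tau_m$, and the cospan computation for associativity goes through, without ever inverting it; only \eqref{basechange1} appears in \eqref{taum} in the reversed direction and hence must be invertible. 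The invertibility of \eqref{basechange2} is used elsewhere, namely to make $\tau_m$ itself an isomorphism, which is what Theorem \ref{theorem:monadic} needs in order to conclude that the $\tau_M$ attached to an arbitrary $\Ac$-action is automatically an isomorphism -- exactly as the remark following the corollary records. With that one sentence corrected, your argument coincides with the paper's.
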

\begin{proof}
The assumptions allow us to interpret $\Ac^2$ and $\Ac^3$ as cospans in $\Cc_\bullet$.
\end{proof}

\begin{remark}
Note that \eqref{basechange1} is required in order to define $m$, while \eqref{basechange2} is needed to make sure that $\tau_m$ of \eqref{taum} is an isomorphism.  Finally, \eqref{basechange3} facilitates the proof of associativity. 
\end{remark}

\begin{theorem}\label{theorem:monadic}
Let $\Cc_\bullet$ denote a cosimplicial object in categories with functors possessing left adjoints.  Assume that $\Cc_\bullet$ has \propA\!\!, then $$HH(\Cc_\bullet)=\projlimD\,\Cc_\bullet=\Ac_{\Cc_0}\text{-mod}.$$
\end{theorem}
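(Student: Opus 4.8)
The plan is to establish the equivalence $\projlimD \Cc_\bullet \simeq \Ac_{\Cc_0}\text{-mod}$ by exhibiting mutually inverse functors. In one direction, given an object of the inverse limit, i.e., an $M\in\Cc_0$ together with the isomorphism $\tau\colon\delta_{0*}M\to\delta_{1*}M$ satisfying the unit condition $s_{0*}(\tau)=\Id$ and the associativity condition \eqref{associativity} (transported to the right-adjoint side), I would send it to $M$ equipped with the action $a\colon\Ac(M)\to M$ obtained from $\tau$ by adjunction, exactly as in Lemma \ref{ass}. By Lemma \ref{ass} the associativity condition on $\tau$ is equivalent to $a$ being associative, i.e., to $a\circ\Ac(a) = a\circ m$; by Lemma \ref{unit} the unit condition on $\tau$ is equivalent to $a\circ u = \Id$. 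Hence this assignment lands in $\Ac_{\Cc_0}\text{-mod}$. Conversely, given an $\Ac$-module $(M,a)$, adjunction produces $\tau\colon\delta_{0*}M\to\delta_{1*}M$, and the same two lemmas, read in reverse, show that the module axioms force the unit and associativity conditions; one must also check $\tau$ is an \emph{isomorphism}, which follows because \eqref{basechange1} and \eqref{basechange2} are isomorphisms (\propA) so that $\tau_m$ is invertible, and then the action axioms let one write down an explicit inverse to $\tau$ (built from $a$ and the structure isos), much as for the Eilenberg–Moore description of descent data.

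Next I would promote this bijection-on-objects to an equivalence of categories by treating morphisms. A morphism in $\projlimD\Cc_\bullet$ from $(M,\tau)$ to $(M',\tau')$ is a map $\phi\colon M\to M'$ in $\Cc_0$ with $\delta_{1*}(\phi)\circ\tau = \tau'\circ\delta_{0*}(\phi)$; a morphism of $\Ac$-modules is a $\phi\colon M\to M'$ with $a'\circ\Ac(\phi) = \phi\circ a$. Unwinding the adjunction (the action $a$ is the adjunct of $\tau$, naturally in $M$) shows these two conditions are literally the same condition on $\phi$, so the functors are fully faithful; combined with essential surjectivity from the object-level argument, this gives the equivalence. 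It then remains to identify $\projlimD\Cc_\bullet$ with $HH(\Cc_\bullet)$: this is the content of the discussion preceding the \propA subsection, where the inverse limit over $\Delta$ of the cosimplicial object (obtained by passing to left adjoints) was shown to compute the same category as the colimit over $\Delta$ of the original simplicial object, which is by definition $HH(\Cc_\bullet)$. So that identification is already in hand and needs only to be cited.

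The main obstacle, and the step deserving the most care, is showing that the $\tau$ reconstructed from an arbitrary $\Ac$-module $(M,a)$ is an \emph{isomorphism}, and that the multiplication $m$ on $\Ac$ used in the definition of $\Ac$-module genuinely corresponds, under adjunction, to the cospan composite $\tau_m$ of \eqref{taum} — in other words that the two notions of ``associative action'' match on the nose and not just up to the coherence isomorphisms from \propA. This is where \eqref{basechange1} and \eqref{basechange2} are used essentially (to know $\tau_m$ is invertible and that $m$ is well-defined), and where \eqref{basechange3} enters through the associativity of $\Ac$ already proved in $2\Delta$. Concretely, I expect to verify that for $(M,a)$ an $\Ac$-module the composite
\[
\delta_{1*}M \xrightarrow{\ \eta\ } \delta_{1*}\delta_1^*\delta_{1*}M \xleftarrow{\ \eqref{basechange2}\ } \delta_2^*\delta_{1*}\delta_{0*}M = \delta_2^*\delta_{0*}\delta_{0*}M \xrightarrow{\ \eqref{basechange1}\ } \delta_{0*}\delta_1^*\delta_{0*}M \xrightarrow{\ \delta_{0*}(a)\ } \delta_{0*}M
\]
provides a two-sided inverse to $\tau$, using the module axioms and the triangle identities; this is the technical heart, and everything else is the formal adjunction bookkeeping sketched above. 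I would structure the written proof as: (i) recall the object description of the inverse limit; (ii) define the two functors on objects via Lemmas \ref{ass} and \ref{unit}; (iii) check invertibility of $\tau$ using \propA; (iv) check the morphism conditions coincide; (v) cite the $HH(\Cc_\bullet)=\projlimD\Cc_\bullet$ identification to conclude.
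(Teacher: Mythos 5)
Your overall architecture coincides with the paper's: the object- and morphism-level dictionary between $(M,\tau)$ and $(M,a)$ via adjunction, with Lemmas \ref{ass} and \ref{unit} translating associativity and unitality, is exactly how the proof goes, and you correctly isolate the one nontrivial point, namely that the $\tau$ reconstructed from an arbitrary $\Ac$-module is an isomorphism. The gap is in your treatment of that point. The candidate inverse you write down does not typecheck: the unit of $(\delta_1^*,\delta_{1*})$ at $\delta_{1*}M$ lands in $\delta_{1*}\delta_1^*\delta_{1*}M$, whereas \eqref{basechange2} evaluated at $\delta_{0*}M$ has target $\delta_{1*}\delta_1^*\delta_{0*}M$; evaluating it instead at $\delta_{1*}M$ produces $\delta_2^*\delta_{1*}\delta_{1*}M$, and since the cosimplicial identities give $\delta_{1*}\delta_{1*}=\delta_{2*}\delta_{1*}$ while $\delta_{1*}\delta_{0*}=\delta_{0*}\delta_{0*}$, this cannot be identified with the $\delta_2^*\delta_{1*}\delta_{0*}M$ you need, so the chain cannot be continued to $\delta_{0*}M$. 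More seriously, even with the types repaired it is unclear how the module axioms alone would make such a composite a two-sided inverse: unlike in symmetric descent there is no second action (of $\delta_0^*\delta_{1*}$, say) on $M$ to play against $a$, so "an explicit inverse built from $a$ and the structure isos" is not obviously available.

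The paper closes this gap by a short homological argument which you may want to adopt: the bar complex $\cdots\to\Ac^2(M)\to\Ac(M)\to M\to 0$ is null-homotopic in $\Cc_0$ (split by the unit $u$), hence remains exact after applying the additive functors $\delta_{0*}$ and $\delta_{1*}$; the comparison map $\tau$ on the terms $\Ac(M)$ and $\Ac^2(M)$ is an instance of $\tau_m$ of \eqref{taum}, which is an isomorphism precisely because \eqref{basechange1} and \eqref{basechange2} are (this is where \propA enters), and then $\tau_M$ is an isomorphism by the five lemma applied to the resulting map of exact sequences. Everything else in your outline --- the identification of $\projlimD\,\Cc_\bullet$ with $HH(\Cc_\bullet)$, the matching of the morphism conditions, and the appeal to Corollary \ref{ismonad} for the monad structure --- is correct and is handled the same way in the paper.
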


\begin{proof}
By Corollary \ref{ismonad} we have that $\Ac\in End(\Cc_0)$ is a monad.  Recall that the inverse limit consists of $M\in\Cc_0$ with isomorphisms $\tau_M:\delta_{0*}M\to\delta_{1*}M$ such that $\delta_{2*}(\tau_M)\delta_{0*}(\tau_M)=\delta_{1*}(\tau_M)$ (associativity) and $s_{0*}(\tau_M)=Id_M$ (unitality).  Furthermore, maps between such objects $M$ and $N$ are $\varphi\in Hom_{\Cc_0}(M,N)$ such that $\delta_{1*}(\varphi)\tau_M=\tau_N\delta_{0*}(\varphi)$.

The correspondence between $(M,\tau_M)$ and $(M,a_M)$, where $\tau_M:\delta_{0*}M\to\delta_{1*}M$ (inverse limit) and $a_M:\delta^*_1\delta_{0*}M\to M$ (action) is via adjunction.  Since $\delta_{1*}(\varphi)\tau_M=\tau_N\delta_{0*}(\varphi)$ if and only if $\varphi a_M=a_N\delta^*_1\delta_{0*}(\varphi)$, so the maps in the inverse limit coincide with the maps of $\Ac$-modules.  Note that by Lemmas \ref{ass} and \ref{unit} the associativity and unitality conditions coincide as well.

The only issue is that $\tau_M$ is assumed to be an isomorphism and it is not yet obvious that any such $\tau$ obtained from an action $a_M:\Ac(M)\to M$ via adjunction is automatically an isomorphism.  To that end observe that $\cdots\to\Ac^2(M)\to\Ac(M)\to M\to 0$ is a complex of $\Ac$-modules that is null-homotopic in $\Cc_0$ and thus we have a commutative diagram of exact sequences: $$\xymatrix{
\delta_{0*}\Ac^2(M)\ar[r]\ar[d]^{\tau_m(\Ac(M))} & \delta_{0*}\Ac(M)\ar[r]\ar[d]^{\tau_m(M)} & \delta_{0*}M\ar[d]^{\tau_M}\ar[r] & 0\\
\delta_{1*}\Ac^2(M)\ar[r] & \delta_{1*}\Ac(M)\ar[r] & \delta_{1*}M\ar[r] & 0\\
}$$ and recall that $\tau_m$ (see \eqref{taum}) is an isomorphism (this is where we used \eqref{basechange2} being an isomorphism) and so $\tau_M$ is as well. 
\end{proof}

\begin{proposition}\label{prop:adj}
Let $\Mc$ be a $\Cc$-bimodule category with right adjoints and \propA\!\!.  Then $$HH(\Cc,\Mc)=\Ac_\Mc\text{-mod}$$ and the canonical map $Tr:\Mc\to HH(\Cc,\Mc)$ is realized as part of an adjoint pair of functors $(F,U)$: $$F:\Mc\leftrightarrows \Ac_\Mc\text{-mod}: U$$ where $U$ forgets the $\Ac$-module structure and $F$ freely creates it via $F(M)=\Ac(M)$.
\end{proposition}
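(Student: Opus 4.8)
The plan is to reduce Proposition~\ref{prop:adj} almost entirely to Theorem~\ref{theorem:monadic} together with the general formalism of Eilenberg--Moore categories. First I would observe that $\Mc$ is itself a $\Cc$-bimodule category with right adjoints and \propA, so the cosimplicial object $\Cc_\bullet=\Mc\bt\Cc^{\bt\bullet}$ has $\Cc_0=\Mc$ and satisfies the hypotheses of Theorem~\ref{theorem:monadic}. Applying that theorem directly gives $HH(\Cc,\Mc)=\projlimD\,\Mc\bt\Cc^{\bt\bullet}=\Ac_\Mc\text{-mod}$, which is the first assertion. The monad $\Ac=\delta_1^*\delta_{0*}$ here is the endofunctor of $\Mc$ built from the adjoints of the two face maps $[0]\rightrightarrows[1]$ in $\Mc\bt\Cc^{\bt\bullet}$, i.e.\ from $\Delta_{l*}$, $\Delta_{r*}$ and the unit/product adjoints of $\Cc$.

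Next I would identify the canonical functor $Tr:\Mc\to HH(\Cc,\Mc)$ with the free functor of the Eilenberg--Moore adjunction. Under the inverse-limit description, $Tr$ sends $M\in\Mc=\Cc_0$ to the ``constant'' descent datum, and chasing through the equivalence $HH(\Cc,\Mc)\simeq\Ac_\Mc\text{-mod}$ of Theorem~\ref{theorem:monadic} — which is by adjunction, sending $(M,\tau_M)$ to $(M,a_M)$ — this becomes precisely the assignment $M\mapsto(\Ac(M),m_M)$ with the free $\Ac$-action given by the multiplication $m$ of \eqref{taum}. So $F=Tr$ is the free-$\Ac$-module functor and $U$ is the forgetful functor; that $(F,U)$ is an adjoint pair is then the standard fact about any monad. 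The only genuine content beyond citing Theorem~\ref{theorem:monadic} is the verification that the $\Ac$-module structure one reads off from the simplicial/descent structure on the constant object $Tr(M)$ agrees with the free one, and that $Tr$ is a functor compatible with morphisms; but this is exactly parallel to the morphism-compatibility argument already carried out inside the proof of Theorem~\ref{theorem:monadic} (the equivalence $\delta_{1*}(\varphi)\tau_M=\tau_N\delta_{0*}(\varphi)\iff\varphi a_M=a_N\delta_1^*\delta_{0*}(\varphi)$), so I would invoke it rather than redo it.

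The step I expect to require the most care is pinning down the claim that $F(M)=\Ac(M)$ as stated — that is, checking that the composite $\Mc\xrightarrow{Tr}HH(\Cc,\Mc)\xrightarrow{\sim}\Ac_\Mc\text{-mod}$ really lands on the free module rather than on some twisted variant, and in particular that the $\tau$-datum attached to $Tr(M)$ is the one whose adjunct is $m:\Ac\Ac(M)\to\Ac(M)$. Concretely this amounts to unwinding the cospan picture of Section~\ref{hochschildhomology}: $Tr(M)$ corresponds, at level $[1]$, to $\delta_{0*}\Ac(M)$ vs.\ $\delta_{1*}\Ac(M)$ with comparison map exactly $\tau_m(M)$ of \eqref{taum}, and one must note that the associativity and unit conditions for this $\tau_m$ are precisely the monad axioms for $\Ac$ already established in the two cospan lemmas. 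Once that bookkeeping is done, the adjunction $(F,U)$, the formula $F(M)=\Ac(M)$, and the description of $U$ as forgetting the $\Ac$-action all follow formally, and the proof is complete.
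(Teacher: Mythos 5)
Your proposal is correct and follows essentially the same route as the paper, which simply applies Theorem~\ref{theorem:monadic} to $\Mc\bt\Cc^{\bt\bullet}$ and observes that the right adjoint of $Tr$ is the forgetful functor $U$; your extra bookkeeping identifying $Tr$ with the free $\Ac$-module functor is just an expansion of what the paper leaves implicit. (The only loose phrase is calling $Tr(M)$ a ``constant'' descent datum --- it is really the image of $M$ under the left adjoint of the limit projection, i.e.\ the free module --- but you arrive at the correct identification anyway.)
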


\begin{proof}
This is immediate from Theorem \ref{theorem:monadic} applied to $\Mc\bt\Cc^{\bt\bullet}$.  In particular, it is clear that the right adjoint of $Tr$ coincides with $U$.
\end{proof}

\begin{remark}\label{rem:trace}
We observe that for formal reasons there is a canonical identification $\tau_{C,M}:F(C\cdot M)\simeq F(M\cdot C)$ in $\Ac_\Mc\text{-mod}$  for any $C\in\Cc$ and $M\in\Mc$.  More generally, for $T\in\Mc\bt\Cc$ we have $\tau_T:F(\delta_1^* T)\simeq F(\delta_0^* T)$.  This identification is unital and associative in the sense of Lemma \ref{contracoh}, for the same reasons as in its proof.
\end{remark}

\subsection{Cyclic homology of a monoidal category $\Cc$}\label{sec:cyclicgen}
If in the discussion of Section \ref{hochschildhomology} we replace an arbitrary $\Cc$-bimodule category $\Mc$ by $\Cc$ itself, then the structure on $\Cc^{\boxtimes \bullet+1}$ of a cosimplicial object in categories (with left adjoints) extends to that of a cocyclic object in the same $2$-category. In this case the inverse limit of that diagram is the cyclic homology  $HC(\Cc)$ of $\Cc$.

In this section we will address the more general version of the above where we consider $\Cc_\bullet$, a co-cyclic object in categories (with left adjoints).  We will require that when considered as a co-simplicial object, it satisfy the \propA assumption. This will allow us to apply Theorem \ref{theorem:monadic} to it.  

\begin{definition}
Let $\Cc_\bullet$ be a co-cyclic object in categories (with left adjoints).  We say that it has \propA if its restriction to a co-simplicial object has \propA\!\!. 
\end{definition}

An important, though unsurprising,  new feature of $\Ac$ in this setting is $\varsigma: Id\to\Ac$ obtained from the extra degeneracy $s_1:[1]\to [0]$.  More precisely, \begin{equation}\label{thesigma}\varsigma:Id=\delta_1^*s_1^*s_{1*}\delta_{0*}\to \delta_1^*\delta_{0*}=\Ac.\end{equation}  Let $HH(\Cc_\bullet)$ denote the inverse limit of $\Cc_\bullet$ over the simplex category $\Delta$.  Recall  that it consists of $M\in\Cc_0$ equipped with an isomorphism $\tau_M:\delta_{0*}M\to\delta_{1*}M$ (that satisfies unitality and associativity).  Note that if $HC(\Cc_\bullet)$ is the inverse limit of $\Cc_\bullet$ over the Connes' cyclic category $\Lambda$, then the only additional requirement on $M$ is that \begin{equation}\label{extradeg}s_{1*}(\tau)=Id_M.\end{equation}  

On the other hand, without insisting on \eqref{extradeg}, we obtain that $HH(\Cc_\bullet)$ is equipped with an element $\varsigma=s_{1*}(\tau)$ of its Drinfeld center.  The latter assertion follows immediately from noting that if $\alpha:M\to N$ is a map in $HH(\Cc_\bullet)$ then $\delta_{1*}(\alpha)\tau_M=\tau_N\delta_{0*}(\alpha)$ and so applying $s_{1*}$ to everything we get $\alpha\varsigma_M=\varsigma_N\alpha$.  Note that since $\tau$ is an isomorphism, so is $s_{1*}(\tau)$, thus $\varsigma$ is an invertible element of the Drinfeld center of $HH(\Cc_\bullet)$.

\begin{remark}\label{rem:cyctr}
Recall Remark \ref{rem:trace} where it was pointed out that $Tr:\Mc\to HH(\Cc,\Mc)$ has a unital associative $\tau_T:Tr(\delta^*_1 T)\to Tr(\delta^*_0 T)$ for $T\in\Mc\bt\Cc$.  Observe that in our current setting of $\Mc=\Cc$, we have $Tr:\Cc \to HH(\Cc,\Cc)$ as well, and whereas the unitality meant: for $X\in\Cc$ we have $\tau_{s^*_0 X}=Id_{TrX}$, now we also get that \begin{equation}\label{eq:varsig}\tau_{s^*_1 X}=\varsigma_{TrX}. 
\end{equation} This implies, in particular, that for $X,Y\in\Cc$ we have $\tau_{Y,X}\tau_{X,Y}=\varsigma_{Tr(X\ot Y)}$.
\end{remark}

Observe that the  usage of $\varsigma$ for both the element of $\Ac$ in \eqref{thesigma} and the element of the Drinfeld center is not problematic, since it is immediate that they are one and the same. Namely, if $a:\Ac(M)\to M$ denotes the action incarnation of the structure on $M$ then  $s_{1*}(\tau)=a\varsigma(M)$.  Consequently, completely formally \eqref{thesigma} specifies a central element of $\Ac$.  To summarize:

\begin{lemma}
Let $\Cc_\bullet$ be a co-cyclic object in categories (with left adjoints) and \propA\!\!, then the monad $\Ac$ on $\Cc_0$ is equipped with a central element $\varsigma: Id\to \Ac$.  Furthermore  $HC(\Cc_\bullet)$ consists of those $\Ac$-modules in $\Cc_0$ on which $\varsigma$ acts by identity. 
\end{lemma}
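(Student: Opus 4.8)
The statement packages together three assertions that are essentially already available in the preceding material, so the plan is to assemble them rather than prove anything new. First I would invoke Theorem \ref{theorem:monadic}: since $\Cc_\bullet$, viewed as a cosimplicial object, has \propA, the endofunctor $\Ac=\delta_1^*\delta_{0*}$ on $\Cc_0$ is a monad and $HH(\Cc_\bullet)=\projlimD\Cc_\bullet=\Ac_{\Cc_0}\text{-mod}$. Next I would produce the central element. The map $\varsigma:Id\to\Ac$ is defined in \eqref{thesigma} via the extra degeneracy $s_1:[1]\to[0]$, namely $\varsigma:Id=\delta_1^*s_1^*s_{1*}\delta_{0*}\to\delta_1^*\delta_{0*}=\Ac$ using the counit $s_1^*s_{1*}\to Id$. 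To see it is central as an element of the monad, I would use the observation already recorded in the text: for $M\in HH(\Cc_\bullet)$ with structure isomorphism $\tau_M$ and action $a_M$, one has $s_{1*}(\tau_M)=a_M\circ\varsigma(M)$, and for any morphism $\alpha:M\to N$ in $HH(\Cc_\bullet)$ the compatibility $\delta_{1*}(\alpha)\tau_M=\tau_N\delta_{0*}(\alpha)$, upon applying $s_{1*}$, gives $\alpha\circ s_{1*}(\tau_M)=s_{1*}(\tau_N)\circ\alpha$; translating back through the $(\tau,a)$-correspondence this says exactly that $\varsigma$ commutes with every $\Ac$-action, i.e. $\varsigma$ is a central element of the monad $\Ac$ in the sense that $a_M\circ\Ac(\varsigma(M))=a_M\circ\varsigma(\Ac(M))\circ(\text{unit})$ — more cleanly, $\varsigma$ lies in the Drinfeld center of $\Ac_{\Cc_0}\text{-mod}$, which is what is meant.

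For the final clause I would compare the two inverse limits directly. By definition $HC(\Cc_\bullet)=\projliml\Cc_\bullet$ is computed over Connes' category $\Lambda$, and the description recalled in the text says that an object of $HC(\Cc_\bullet)$ is an object of $HH(\Cc_\bullet)$ — i.e. an $(M,\tau_M)$ satisfying unitality and associativity — subject to the single extra condition \eqref{extradeg}, $s_{1*}(\tau_M)=Id_M$. Under the identification $HH(\Cc_\bullet)=\Ac_{\Cc_0}\text{-mod}$ and the equality $s_{1*}(\tau_M)=a_M\circ\varsigma(M)$ from the paragraph above, the condition $s_{1*}(\tau_M)=Id_M$ is precisely the statement that the composite $M\xrightarrow{\varsigma(M)}\Ac(M)\xrightarrow{a_M}M$ is the identity, i.e. that $\varsigma$ acts by the identity on $(M,a_M)$. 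Morphisms are unchanged — they are the same morphisms in $\Cc_0$ in both limits — so $HC(\Cc_\bullet)$ is the full subcategory of $\Ac_{\Cc_0}\text{-mod}$ on which $\varsigma$ acts by the identity, as claimed.

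The only genuinely delicate point — and the one I would present with a little care rather than waving at — is the compatibility between the two a priori different meanings of "$\varsigma$", one as the map $Id\to\Ac$ of \eqref{thesigma} built from $s_1$, the other as the automorphism $s_{1*}(\tau)$ of the identity functor of $HH(\Cc_\bullet)$. The text asserts "it is immediate that they are one and the same," and the identity $s_{1*}(\tau_M)=a_M\circ\varsigma(M)$ is the bridge; I would spell out this adjunction computation, since everything else (monadicity, the shape of $HH$ and $HC$ as inverse limits, the cospan manipulations) is quoted verbatim from Theorem \ref{theorem:monadic}, Corollary \ref{ismonad}, and the surrounding discussion. No further obstacle is expected: the lemma is a bookkeeping consequence of results already in hand.
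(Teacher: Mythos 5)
Your proposal is correct and follows essentially the same route as the paper: the paper's ``proof'' is exactly the discussion preceding the lemma, namely defining $\varsigma$ from the extra degeneracy $s_1$ as in \eqref{thesigma}, checking centrality by applying $s_{1*}$ to the compatibility $\delta_{1*}(\alpha)\tau_M=\tau_N\delta_{0*}(\alpha)$, and identifying the extra condition \eqref{extradeg} with ``$\varsigma$ acts by the identity'' via the adjunction identity $s_{1*}(\tau)=a\circ\varsigma(M)$. Your decision to spell out that last adjunction computation, which the paper dismisses as immediate, is the only (welcome) difference.
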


\begin{corollary}\label{cor:cycliccoh1}
Let $\Cc$ be a monoidal category with right adjoints and \propA\!\!, then $HC(\Cc)=\Ac'_\Cc\text{-mod}$, i.e., the full subcategory of $HH(\Cc,\Cc)$  that consists of objects on which $\varsigma$ acts by identity.
\end{corollary}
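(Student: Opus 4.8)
The plan is to deduce Corollary \ref{cor:cycliccoh1} from the immediately preceding Lemma together with Theorem \ref{theorem:monadic}, by simply specializing the general co-cyclic object $\Cc_\bullet$ to the one attached to a monoidal category. First I would recall that for a monoidal category $\Cc$ with right adjoints, the bar construction $\Cc^{\boxtimes \bullet+1}$ is naturally a co-cyclic object in categories with functors possessing left adjoints (this is the content of the opening paragraph of Section \ref{sec:cyclicgen}, applied with $\Mc = \Cc$), and that $HC(\Cc)$ is by definition its inverse limit over Connes' cyclic category $\Lambda$, while $HH(\Cc,\Cc)$ is the inverse limit over $\Delta$. The hypothesis that $\Cc$ has \propA is precisely the requirement that $\Cc_\bullet = \Cc^{\boxtimes\bullet+1}$, viewed as a co-simplicial object, has \propA\!\!, so Theorem \ref{theorem:monadic} applies and gives $HH(\Cc,\Cc) = \Ac_{\Cc}\text{-mod}$ with $\Ac = \delta_1^*\delta_{0*}$ the monad on $\Cc_0 = \Cc$.

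Next I would invoke the preceding Lemma: since $\Cc_\bullet$ here is genuinely co-cyclic (not merely co-simplicial), the monad $\Ac$ carries the central element $\varsigma\colon Id \to \Ac$ of \eqref{thesigma}, and $HC(\Cc_\bullet)$ — i.e.\ $HC(\Cc)$ — consists of exactly those $\Ac$-modules on which $\varsigma$ acts by the identity. Unwinding this through the equivalence $HH(\Cc,\Cc)=\Ac_\Cc\text{-mod}$: an object is a pair $(M,a)$ with $a\colon\Ac(M)\to M$ an action, the extra-degeneracy condition \eqref{extradeg}, namely $s_{1*}(\tau)=Id_M$, translates (by the identity $s_{1*}(\tau)=a\,\varsigma(M)$ recorded just before the Lemma) into the condition that the composite $M \xrightarrow{\varsigma(M)} \Ac(M)\xrightarrow{a} M$ be $Id_M$. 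Calling $\Ac'_\Cc\text{-mod}$ the full subcategory of $\Ac_\Cc\text{-mod}$ cut out by this equation on objects — with morphisms unchanged, since any $\Ac$-module map automatically intertwines $\varsigma$ by centrality, as noted in the discussion of the Drinfeld center — we get $HC(\Cc) = \Ac'_\Cc\text{-mod}$, which is the assertion.

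The write-up is therefore essentially a matter of bookkeeping; there is no serious obstacle, only the need to check that the subcategory $\Ac'_\Cc\text{-mod}$ is genuinely \emph{full} in $\Ac_\Cc\text{-mod}$. That fullness is exactly the observation, already made in the paragraph preceding the Lemma, that $\varsigma$ is a central element, so that for any $\Ac$-module map $\alpha\colon M\to N$ one has $\alpha\,\varsigma_M = \varsigma_N\,\alpha$; hence no compatibility with $\varsigma$ need be imposed on morphisms and the condition "$\varsigma$ acts by identity" is a condition on objects alone. I would state the corollary's proof in roughly two sentences: one citing Theorem \ref{theorem:monadic} and the Lemma to identify $HC(\Cc)$ with the $\Ac$-modules killed by $\varsigma - u$, and one observing that this is a full subcategory of $HH(\Cc,\Cc)=\Ac_\Cc\text{-mod}$ by centrality of $\varsigma$, which is what the notation $\Ac'_\Cc\text{-mod}$ abbreviates.
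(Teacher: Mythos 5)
Your proposal is correct and follows exactly the route the paper intends: the corollary is stated as an immediate specialization of the preceding Lemma (together with Theorem \ref{theorem:monadic}) to the co-cyclic object $\Cc^{\boxtimes\bullet+1}$, and your unwinding of the extra-degeneracy condition via $s_{1*}(\tau)=a\,\varsigma(M)$ and the fullness remark via centrality of $\varsigma$ are precisely the bookkeeping the paper leaves implicit.
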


Motivated by the $\infty$-category setting and the derived yoga, we find the following heuristic useful.  Given an algebra $A$ with a central element $\varsigma$,  to consider $A$-modules on which $\varsigma$ acts by identity is to consider $A/(1-\varsigma)$-modules. It would be more enlightened however (in the derived sense) to instead view the cone of $(1-\varsigma): A\to A$ as a differential graded algebra, i.e., $(A[\epsilon],d)$ where $deg(\epsilon)=-1$ and it too is central, and $d=(1-\varsigma)\iota_\epsilon$ (with $\iota$ denoting contraction). 

Note that we have an obvious map of differential graded algebras (DGAs) $i:A\to A[\epsilon]$ so that we get an adjoint pair of functors between $A$-modules (since $A$ is viewed as a DGA they are actually complexes) and $A[\epsilon]$-modules, namely induction and restriction.  Furthermore, we can explicitly describe the category of $A[\epsilon]$-modules as complexes $(M^\bullet,d,h)$ where $(M^\bullet,d)$ is a cohomological complex of $A$-modules, $(M^\bullet,h)$ is a homological complex of $A$-modules, and $dh+hd=1-\varsigma$.   Of course the cohomology of such $M^\bullet$ is a graded $A/(1-\varsigma)$-module.  The adjoint pair of functors is then seen to be the cone of $1-\varsigma$ (which acquires the homotopy $h$ automatically) and the functor that forgets the homotopy.

We can apply the above heuristic to the monad $\Ac$ and its $\varsigma$. We do this in section \ref{subsec:mixedaYDcontra}.  The result is the \emph{correct} cyclic homology category of $\Cc$.  More precisely, given $\Cc$ as above we replace the \emph{naive} $HC(\Cc)$ by complexes $(M^\bullet,d,h)$ in $HH(\Cc,\Cc)$ with $dh+hd=1-\varsigma$, where we may recall that $HH(\Cc,\Cc)$ comes equipped with an element $\varsigma$ of its Drinfeld center.

\section{Induced adjunctions between Hochschild homologies}\label{ss:hhadj}
We address the adjunction of \eqref{eq:hhadj} in more detail here.  By the Definition \ref{def:admissible} of an admissible bimodule $\Mc$ we need to deal with two dissimilar cases.  The first is the case of $\rho^*:\Cc\to\Dc$ a tensor functor with a right adjoint $\rho_*$.  The second is the case of $B\in Alg(\Dc)$ and the two monoidal categories $\Dc$ and $\bimod_\Dc B$.  Then we take the $\Dc$ in the first case to be the $\bimod_\Dc B$ of the second to obtain the adjunctions between $HH(\Cc)$ and $HH(\Dc)$ as required. Applications of this to the Hopf setting will be examined in Section \ref{sec:appl}.

\subsection{The monoidal functor}\label{sss:monfun}
Consider a monoidal functor $\rho^*:\Cc\to\Dc$ with a right adjoint $\rho_*$.  We will relate the Hochschild homologies of $\Cc$ and $\Dc$ by an adjoint pair of functors.  Observe that we have \begin{align*}\rho^*\Delta_\Cc^*\sigma\Delta_{\Cc*}\rho_*&\simeq\Delta^*_\Dc(\rho^*\bt\rho^*)\sigma(\rho_*\bt\rho_*)\Delta_{\Dc*}\\
&\simeq\Delta^*_\Dc\sigma(\rho^*\bt\rho^*)(\rho_*\bt\rho_*)\Delta_{\Dc*}\to\Delta_\Dc^*\sigma\Delta_{\Dc*}
\end{align*} and so, by adjunction, $\Ac_\Cc\rho_*\to\rho_*\Ac_\Dc$.  Thus we can define \begin{equation}\label{map3}HH(\rho)_*:\Ac_\Dc\mmm\to\Ac_\Cc\mmm\end{equation} as $T\mapsto\rho_*T$ with the action: $\Ac_\Cc\rho_*T\to\rho_*\Ac_\Dc T\to\rho_* T$.

On the other hand, also by adjunction, we have  \begin{equation}\label{map2}\rho^*\Ac_\Cc\to\Ac_\Dc\rho^*.\end{equation} We are now ready to construct the left adjoint to \eqref{map3}, i.e., for $T\in\Ac_\Cc\mmm$ the object $HH(\rho)^* T$ is defined to be the coequalizer of the diagram: $$\xymatrix{\Ac_\Dc\rho^*\Ac_\Cc T\ar[dr]_{Id_{\Ac_\Dc}\circ\eqref{map2}\,\,}\ar[rr]^{Id_{\Ac_\Dc\rho^*}\circ act} & &\Ac_\Dc\rho^* T\\
&\Ac_\Dc\Ac_\Dc\rho^*T\ar[ur]_{\,\,mult\circ Id_{\rho^*T}}&
}$$ so that, roughly speaking, $HH(\rho)^*$ is induction from $\Ac_\Cc$ to $\Ac_\Dc$ and  $HH(\rho)_*$ is the restriction from $\Ac_\Dc$ to $\Ac_\Cc$.

Of course the adjoint pair of functors also exists from formal considerations as a monoidal $\rho^*$ induces a map of cyclic objects in categories and its right adjoint $\rho_*$ induces a map of the corresponding cocyclic ones.

\begin{remark}\label{re:comm}
Observe that $HH(\rho)^* Tr\simeq Tr\rho^*$ or, equivalently, $U HH(\rho)_*\simeq \rho_* U$.
\end{remark}

\subsection{The  category of $B$-bimodules}\label{sss:bimodcat}
Let $B\in Alg(\Dc)$, we need to relate the Hochschild homologies of $\Dc$ and $\bimod_\Dc B$ by an adjoint pair of functors.  It will be useful to review a concept very much related to $\Delta_*$, namely that of internal Homs (see Section \ref{contratraces}).  Briefly, a monoidal category is said to be biclosed if the product has right adjoints (internal Homs), i.e., we have adjunctions: $$Hom_\Dc(X\ot T,M)\simeq Hom_\Dc(X,T\la M)\quad\text{and}\quad Hom_\Dc(T\ot X,M)\simeq Hom_\Dc(X, M\ra T).$$  Note that if $\Delta_*$ exists then we may demonstrate the biclosed property with a bit more effort (see Remark \ref{rem:biclosedfromadj}). 

The main ingredient here is the map $HH_\cdot(B,-):\bimod_\Dc B\to HH(\Dc)$ that sends a bimodule $S$ to its ``Hochschild homology" complex $Tr(S\ot B^{\ot-\bullet})$.  Note that applying $Tr$ allows us to bypass the problem that arises if $\Dc$ is not symmetric and so there is no Hochschild differential on $S\ot B^{\ot-\bullet}$ itself.  Since canonically $HH_\cdot(B,S\ot_B T)\simeq HH_\cdot(B,T\ot_B S)$ so $HH_\cdot(B,-)$ induces a map from $HH(\bimod_\Dc B)$, i.e., we have: $$\xymatrix{& \bimod_\Dc B\ar[dr]^{HH_\cdot(B,-)}\ar[dl]_{Tr} &\\
HH(\bimod_\Dc B)\ar@{-->}[rr]& & HH(\Dc)
}$$ where the induced dashed map has a right adjoint $-\ra B$. More precisely, for $M\in HH(\Dc)$ the object $M\ra B$ is not only a $B$-bimodule in $\Dc$, but is actually an object in $HH(\bimod_\Dc B)$ \cite{s}.  So we have the required pair of adjoint functors between the Hochschild homology categories of $\bimod_\Dc B$ and $\Dc$.

\begin{remark}
Returning to Remark \ref{re:extmix} we observe that, roughly speaking, we have an equivalence of mixed complexes $HH(A_\Cc\mmm)_*N\simeq Hom_{\bimod_\Cc A}(A, N\ra A)$.
\end{remark}

\section{Our special case: $\amod$ as an $\hmod$-bimodule}\label{specialcase}
We now apply the general observations of Section \ref{hochschildhomology} to the case of Hopf algebras. The goal here is to demonstrate that the well-known anti-Yetter-Drinfeld contramodules \cite{contra} arise as the Hochschild homology category of $\hmod$, the monoidal category of left $H$-modules.  Furthermore, the stable    anti-Yetter-Drinfeld contramodules are simply objects in the naive cyclic homology category of $\hmod$.

Let $A$ be an algebra in $\Mc^H$, then the structure morphism $\Delta_r:A\to A\ot H$ yields an adjoint pair of functors (this is reviewed below): $$\Delta_r^*:{_{A\ot H}\Mc}\leftrightarrows \amod:\Delta_{r*}.$$  If we set $\Cc=\hmod$ and $\Mc=\amod$ (the category of $A$-modules) then we have adjunctions between $\Mc\boxtimes\Cc$ and $\Mc$ giving the latter a structure of a right $\Cc$-module  such that the action has a right adjoint.  

\begin{remark}
The construction above is a special case of the considerations in Section \ref{sec:limit}.  Namely, if $A\in\Mc^H$ is an algebra then it yields a right module $A_{\Mc^H}\mmm$ over $\Mc^H$.  Furthermore, $\vect$ is an admissible $(\hmod,\Mc^H)$-bimodule and $Fun(\vect,A_{\Mc^H}\mmm)_{\Mc^H}$ is the right $\hmod$-module $\amod$ above.   Note that it need not, in general, be admissible, so that defining its Chern character would be problematic.
\end{remark}

Similarly we may consider an algebra $A$ now in ${}^H\!\!\Mc^H$ (in the sense that it has both  left and  right commuting coactions and both are compatible with the algebra structure), that yields a bimodule category $\Mc$ over $\Cc$ with both actions possessing right adjoints.

\begin{remark}\label{rem:tensorofcats}
Note that if $A$ is an algebra in $\Mc^H$ (or similarly in ${}^H\!\!\Mc$) then it will also yield an $\hmod$-bimodule category $\amod$ by adding the trivial (missing) $H$-coaction to $A$.  Furthermore, if $A\in\Mc^H$ and $B\in{}^H\Mc$ are algebras, then $\amod$ is a right $\hmod$-module and $\Bmod$ is a left $\hmod$-module, while $\amod\bt\Bmod={}_{A\ot B}\Mc$ is an $\hmod$-bimodule.
\end{remark}

\begin{remark}
If we let $A=H$ then all of the considerations of $\Mc=\amod$ apply also to $\Cc=\hmod$ itself.
\end{remark}

More precisely, observe that the coaction maps (using Sweedler notation) $\Delta_r:a\mapsto a_0\ot a_1$ and $\Delta_l:a\mapsto a_{-1}\ot a_0$, together with the counit map $\epsilon:H\to k$ and the coproduct $\Delta: h\mapsto h^1\ot h^2$, endow $A\ot H^{\ot \bullet}$ with the structure of a \emph{co-simplicial object in algebras}.  More precisely, we have face maps: 
\begin{align*}\delta_0(a\ot h_1\ot\cdots \ot h_n)&=a_0\ot a_1\ot h_1\ot\cdots\ot h_n\\
\delta_1(a\ot h_1\ot\cdots \ot h_n)&=a\ot h_1^1\ot h^2_1\ot\cdots\ot h_n\\
&\cdots\\
\delta_n(a\ot h_1\ot\cdots \ot h_n)&=a\ot h_1\ot\cdots\ot h_n^1\ot h^2_n\\
\delta_{n+1}(a\ot h_1\ot\cdots \ot h_n)&=a_0\ot h_1\ot\cdots\ot h_n\ot a_{-1}
\end{align*} and for $i=0,\cdots,n-1$ degeneracies: $$s_i(a\ot h_1\ot\cdots \ot h_n)=\epsilon(h_{i+1})a\ot h_1\ot\cdots\ot\widehat{h_{i+1}}\ot\cdots \ot h_n.$$   

First, a general observation: given a map of algebras $f:A\to B$ we have an adjoint pair of functors \begin{equation}\label{adjpair}(f^*,f_*)\end{equation}between their categories of modules as follows.  If $M$ is a $B$-module then for $m\in f^*(M)=M$ we let $a\cdot m=f(a)m$.  On the other hand if $N$ is an $A$-module we get that $f_*(N)=Hom_A(B,N)$ where we view $B$ as a left $A$-module via $f$, thus $b\cdot\varphi=\varphi(-b)$. 

Applying this to the cosimplicial object $A\ot H^{\ot \bullet}$ in algebras:  using $(-)^*$ we obtain that $\Mc\boxtimes\Cc^{\boxtimes\bullet}$ is a simplicial object in categories with functors possessing right adjoints.  Again, we may use $(-)_*$ to view $\Mc\boxtimes\Cc^{\boxtimes\bullet}$ as a co-simplicial object in categories with functors possessing left adjoints, and apply the discussion in Section \ref{hochschildhomology}.

The following is an easy, but key, lemma that will connect the general discussion to the case of Hopf algebras.  An important fact to note is the essential use of the antipode $S$ and its inverse $S^{-1}$ in the proof of  Lemma \ref{key lemma}.

\begin{lemma}\label{key lemma}
Let $V$ be an $H$-module. Consider $A\ot V$ as a left $A$-module via $\Delta_r$ and $V\ot A$ as a left $A$-module via $\Delta_l$, then we have isomorphisms of $A$-modules: \begin{equation}\label{iso1}A\ot V\simeq A\ot \underline{V}\quad\text{and}\quad V\ot A\simeq \underline{V}\ot A,\end{equation} where $\underline{V}$ denotes $V$ as purely a vector space (multiplicity).
\end{lemma}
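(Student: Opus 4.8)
The plan is to write down explicit mutually inverse $A$-linear maps realizing the two isomorphisms in \eqref{iso1}, with the forward maps given by twisting with the coaction of $A$ and the inverse maps by untwisting with the antipode.

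First I would record the module structures explicitly. Since $\Delta_r$ is an algebra map, the left $A$-action on $A\ot V$ coming from $\Delta_r$ is $a\cdot(b\ot v)=\sum a_0b\ot a_1v$, while $A\ot\underline V$ carries the free action $a\cdot(b\ot v)=ab\ot v$ (equivalently the $\Delta_r$-twisted action with $V$ replaced by the trivial $H$-module, the two descriptions agreeing by counitality of the coaction). Dually, via $\Delta_l$ the action on $V\ot A$ is $a\cdot(v\ot b)=\sum a_{-1}v\ot a_0b$, and $\underline V\ot A$ is free.

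For the first isomorphism I would take $\Psi\colon A\ot\underline V\to A\ot V$, $\Psi(b\ot v)=\sum b_0\ot b_1v$. It is $A$-linear because $\Delta_r$ is an algebra map: $\Psi(ab\ot v)=\sum(ab)_0\ot(ab)_1v=\sum a_0b_0\ot(a_1b_1)v=a\cdot\Psi(b\ot v)$. Its inverse is $b\ot v\mapsto\sum b_0\ot S(b_1)v$; checking that both composites are the identity is a short Sweedler computation using coassociativity of $\Delta_r$ to regroup the iterated coaction, the antipode axioms $\sum h^1S(h^2)=\epsilon(h)1=\sum S(h^1)h^2$, and counitality $\sum\epsilon(b_1)b_0=b$. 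The second isomorphism follows by the same recipe applied to $\Delta_l$: set $\Psi'(v\ot b)=\sum b_{-1}v\ot b_0$, which is $A$-linear since $\Delta_l$ is an algebra map, with inverse $v\ot b\mapsto\sum S^{-1}(b_{-1})v\ot b_0$. This is where $S^{-1}$, rather than $S$, is forced: the identity needed to verify that $\Psi'$ composed with its claimed inverse is the identity is $\sum h^2S^{-1}(h^1)=\epsilon(h)1$, i.e.\ the antipode axiom for $H\cop$, which is exactly what is available because $S$ is invertible.

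I do not anticipate a genuine obstacle; this is in essence the fundamental theorem of Hopf modules, and the only real care needed is Sweedler bookkeeping --- in particular keeping the legs of an iterated coaction in the correct order so that $S$ (resp.\ $S^{-1}$) is applied to the right one. The one conceptual remark worth flagging is that the identity on the underlying vector spaces is \emph{not} $A$-linear for these twisted structures, so one genuinely needs the coaction-twist $\Psi$ (resp.\ $\Psi'$), and its inverse is then forced to be an antipode-twist.
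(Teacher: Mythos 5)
Your proposal is correct and matches the paper's proof essentially verbatim: the same coaction-twist maps $b\ot v\mapsto b_0\ot b_1v$ and $v\ot b\mapsto b_{-1}v\ot b_0$, with the same antipode-twisted inverses using $S$ and $S^{-1}$ respectively. The only difference is that you spell out the Sweedler verification of the inverse identities in slightly more detail than the paper does.
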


\begin{proof}
Recall that $\Delta_r(a)=a_0\ot a_1$ while $\Delta_l(a)=a_{-1}\ot a_0$.  Consider the $\Delta_r$ case first.  Let $\varphi:A\ot\underline{V}\to A\ot V$ be such that $\varphi(a\ot v)=a_0\ot a_1 v$, then $\varphi(xa\ot v)=(xa)_0\ot(xa)_1 v=x_0a_0\ot x_1a_1v=x\cdot(a_0\ot a_1 v)=x\cdot\varphi(a\ot v)$, so $\varphi$ is indeed a map of $A$-modules.  Note that $\theta: A\ot V\to A\ot\underline{V}$ given by $\theta(a\ot v)=a_0\ot S(a_1)v$ is its inverse.

The $\Delta_l$ case is similar: $\varphi:\underline{V}\ot A\to V\ot A$ is given by $\varphi(v\ot a)=a_{-1}v\ot a_0$, while its inverse is $\theta(v\ot a)=S^{-1}(a_{-1})v\ot a_0$.
\end{proof}


\begin{corollary}\label{cortoold}
If $H$ is a Hopf algebra with an invertible antipode, $A$ as above, and $N$ is an $A$-module, then we have an isomorphism of $A\ot H$-modules: \begin{equation}\Delta_{r*}N\simeq Hom_k(H,N), \quad \text{with}\quad a\ot h\cdot\varphi=a_0\varphi(S(a_1)-h), \end{equation} and an isomorphism of $H\ot A$-modules: \begin{equation}\Delta_{l*}N\simeq Hom_k(H,N), \quad \text{with}\quad h\ot a\cdot\varphi=a_0\varphi(S^{-1}(a_{-1})-h). \end{equation}
\end{corollary}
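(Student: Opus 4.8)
The plan is to combine the general adjunction formula $(f^*,f_*)$ from \eqref{adjpair} with the structure-morphism identification of $\Delta_r$ and $\Delta_l$ and then apply Lemma \ref{key lemma} to convert the abstract $\Hom_A$ into an honest $\Hom_k$. Concretely, recall that $\Delta_r:A\to A\ot H$ and $\Delta_l:A\to H\ot A$ are maps of algebras (this is exactly what makes $A\ot H^{\ot\bullet}$ a cosimplicial object in algebras), so by the general observation preceding the lemma we have $\Delta_{r*}N=\Hom_A(A\ot H,N)$, where $A\ot H$ is viewed as a left $A$-module via $\Delta_r$, with the $A\ot H$-module structure given by precomposition, $(a\ot h)\cdot\varphi=\varphi(-(a\ot h))$; similarly $\Delta_{l*}N=\Hom_A(H\ot A,N)$ with $H\ot A$ an $A$-module via $\Delta_l$.

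Next I would invoke Lemma \ref{key lemma} with $V=H$ (an $H$-module via left multiplication): it gives an isomorphism of left $A$-modules $A\ot H\simeq A\ot\underline H$, hence $\Hom_A(A\ot H,N)\simeq\Hom_A(A\ot\underline H,N)\simeq\Hom_k(\underline H,N)=\Hom_k(H,N)$, since $A\ot\underline H$ is just a free $A$-module on the vector space $H$. The only real content is to transport the $A\ot H$-action through these identifications. Here I would use the explicit inverse isomorphism $\theta$ from the proof of Lemma \ref{key lemma}, namely $\theta(a\ot v)=a_0\ot S(a_1)v$, to compute what the precomposition action becomes on $\Hom_k(H,N)$; the right multiplication action of $H$ on $A\ot H$ (which passes to the tensor factor $H$ untouched) contributes the $-h$ slot, and the left $A$-action, after applying $\theta$, contributes the $a_0(-)$ outside and the $S(a_1)$ inside, yielding precisely $(a\ot h)\cdot\varphi=a_0\varphi(S(a_1)-h)$. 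The $\Delta_l$ case is identical in structure, using instead the inverse $\theta(v\ot a)=S^{-1}(a_{-1})v\ot a_0$ from the second half of Lemma \ref{key lemma}, which produces $S^{-1}$ in place of $S$ and accounts for the $H\ot A$ (rather than $A\ot H$) module labeling.

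The main obstacle I anticipate is purely bookkeeping: correctly tracking which $A$-module structure on $A\ot H$ is being used (the one via $\Delta_r$, \emph{not} via multiplication on the first factor) and verifying that the $A\ot H$-module structure on $\Delta_{r*}N$ — which a priori acts by precomposition on both tensor slots of $A\ot H$ — becomes the stated formula after conjugating by $\varphi\mapsto\varphi\circ\theta$. One must be careful that $\theta$ is a map of left $A$-modules but interacts nontrivially with the residual right $H$-action, so the $H$-part of the $A\ot H$-action and the $A$-part do not simply decouple; writing out $(a\ot h)\cdot\varphi$ evaluated on a generic $h'\in H$ and then substituting $\theta$ resolves this. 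No step requires anything beyond associativity of the $H$-action, coassociativity, and the antipode identity already used in Lemma \ref{key lemma}, so once the action is transported correctly the corollary follows immediately.
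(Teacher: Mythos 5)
Your proposal is correct and is exactly the paper's argument: the paper's proof of Corollary \ref{cortoold} is precisely to set $V=H$ in Lemma \ref{key lemma} and trace the isomorphisms between $Hom_A(A\ot H,N)$ and $Hom_k(H,N)$, transporting the precomposition action through $\theta$. You have simply written out the bookkeeping that the paper leaves implicit, and your identification of where the $-h$ slot and the $a_0(\cdot)S(a_1)$ (resp.\ $S^{-1}$) terms come from is accurate.
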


\begin{proof}
Let $V=H$ and trace the isomorphisms of Lemma \ref{key lemma} between $Hom_A(A\ot H, N)$ and $Hom_k(H,N)$.
\end{proof}

\begin{remark} Using Corollary \ref{cortoold} in the case of $A=H$ we recover, via \eqref{adjointaction}, exactly the usual formulas for the biclosed structure on $\hmod$.

\end{remark}

Note that the cosimplicial object $\Mc\boxtimes\Cc^{\boxtimes\bullet}$, even without the \propA assumption, yields an $\Ac\in End(\amod)$ and a $u:Id\to\Ac$ which are described explicitly in the following:

\begin{corollary}\label{firstpart}
Let $H$ and $A$ be as above, the endofunctor $\Ac$ of $\amod$ obtained from the cosimplicial object $\Mc\boxtimes\Cc^{\boxtimes\bullet}$ is as follows:  if $N\in\amod$ then $\Ac(N)=Hom_k(H,N)$ with the $A$-action given by \begin{equation}\label{cor:action} a\cdot\varphi=a_0\varphi(S(a_1)-a_{-1}).\end{equation} Furthermore, the natural transformation $u:Id\to\Ac$ is given by $N\to Hom_k(H,N)$ where \begin{equation}\label{unit:eq}n\mapsto\varphi(h)=\epsilon(h)n.\end{equation} 
\end{corollary}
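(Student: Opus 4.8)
The plan is to compute $\Ac = \delta_1^* \delta_{0*}$ explicitly in the case $\Cc_0 = \amod$, $\Cc_1 = {_{A\ot H}\Mc}$, using the formulas already assembled in this section. Recall that $\Ac$ is the composite $\delta_1^*\delta_{0*}$ where, on the level of algebras, $\delta_0, \delta_1 : A \to A\ot H$ are the two coface maps of the cosimplicial algebra $A\ot H^{\ot\bullet}$ in degrees $0,1$; here $\delta_0(a) = a_0\ot a_1$ (i.e.\ $\Delta_r$) and $\delta_{1} = \delta_2$ in the $n=0$ convention is $\delta_{n+1}(a) = a_0\ot a_{-1}$ (i.e.\ $\Delta_l$ composed with the flip), so that $\delta_0$ plays the role of $\Delta_r$ and $\delta_1$ the role of $\Delta_l$ up to the irrelevant ordering of the two copies of $H$. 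The functor $\delta_{0*}$ is the coinduction $\Delta_{r*}$, and by Corollary \ref{cortoold} we have $\delta_{0*} N \simeq Hom_k(H,N)$ as an $A\ot H$-module with $(a\ot h)\cdot\varphi = a_0\,\varphi(S(a_1)-h)$. Then $\delta_1^*$ is restriction along $\delta_1 : A \to A\ot H$, $a \mapsto a_0 \ot a_{-1}$, so on $\Ac(N) = \delta_1^*\delta_{0*}N = Hom_k(H,N)$ the $A$-action is $a\cdot\varphi = \delta_1(a)\cdot\varphi = (a_0 \ot a_{-1})\cdot\varphi = (a_0)_0\,\varphi\bigl(S((a_0)_1)-a_{-1}\bigr)$. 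Using coassociativity/compatibility of the two coactions to rename $(a_0)_0 \mapsto a_0$, $(a_0)_1 \mapsto a_1$, $a_{-1}$ unchanged, this is exactly $a_0\,\varphi(S(a_1)-a_{-1})$, which is \eqref{cor:action}.

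Next I would identify $u : Id \to \Ac$. From \eqref{theu} we have $u : Id = \delta_1^* s_0^* s_{0*}\delta_{0*} \to \delta_1^*\delta_{0*}$ built from the counit $s_0^* s_{0*} \to Id$ of the degeneracy adjunction; concretely, $s_0 : A\ot H \to A$ is $a\ot h \mapsto \epsilon(h)a$, and the induced map $N = \delta_1^* s_0^* s_{0*}\delta_{0*} N \to \delta_1^*\delta_{0*}N = Hom_k(H,N)$ sends $n$ to the functional $h \mapsto \epsilon(h)n$. This is \eqref{unit:eq}. I should check this map is $A$-linear with respect to \eqref{cor:action}: $a\cdot(h\mapsto\epsilon(h)n)$ evaluated at $h$ is $a_0\,\epsilon(S(a_1)-a_{-1})n = \epsilon(a_1)\epsilon(a_{-1})a_0 \cdot n = a\cdot n$ (using $\epsilon$ of a product and counitality of both coactions), which matches $h\mapsto\epsilon(h)(a\cdot n)$ — so $u$ is well-defined.

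The only genuinely delicate point is bookkeeping: making sure the two $H$-coactions on $A$ (right, written $a\mapsto a_0\ot a_1$, and left, written $a\mapsto a_{-1}\ot a_0$) are correctly tracked through the composite of restriction and coinduction, and that the identifications coming from $\delta_0^* \delta_{n+1}^* $-type simplicial identities (or rather the fact that $\delta_0$ and $\delta_1$ commute appropriately after one coface) are applied with the right Sweedler indices. I would make this precise by writing $\delta_0$ and $\delta_1$ as honest algebra maps $A \to A\ot H$ and composing, rather than reasoning diagrammatically. Everything else is a direct substitution into Corollary \ref{cortoold} and \eqref{theu}, so I expect no substantive obstacle — this corollary is meant to be the concrete unwinding of the abstract monad $\Ac$ in the Hopf case.
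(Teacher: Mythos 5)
Your proposal is correct and follows essentially the same route as the paper: the action formula \eqref{cor:action} is read off from Corollary \ref{cortoold} by restricting the coinduced $A\ot H$-module structure along $\delta_1(a)=a_0\ot a_{-1}$ and using the commutation of the two coactions to rename Sweedler indices, and the unit is the direct unwinding of \eqref{theu} (the paper phrases this as $N=Hom_A(A\ot H,N)^H\to Hom_A(A\ot H,N)$ before identifying with $Hom_k(H,N)$, which is the same computation). The only blemish is the phrase ``$\delta_1=\delta_2$ in the $n=0$ convention,'' which should read $\delta_1=\delta_{n+1}$ with $n=0$; it does not affect the argument.
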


\begin{proof}
The claim \eqref{cor:action} follows immediately from Corollary \ref{cortoold}. For \eqref{unit:eq}, we note that  in our case, $N=\delta_1^*s_0^*s_{0*}\delta_{0*}N\to \delta_1^*\delta_{0*}N$ translates to $$N=Hom_{A\ot H}(A, Hom_{A}(A\ot H,N))=Hom_A(A\ot H,N)^H\to Hom_A(A\ot H,N)$$ with $n\mapsto \varphi(a\ot h)=\epsilon(h)an$ which identifies with \eqref{unit:eq} via Corollary \ref{cortoold}.
\end{proof}

\subsection{The \propA for algebras}\label{basechangeforalgebras}

Consider a commutative diagram of algebras: $$\xymatrix{& D &\\
B\ar[ur]^f & & C\ar[ul]_g\\
& A\ar[ur]_s\ar[ul]^t &
}$$ so that we have adjoint pairs of functors between the relevant categories of modules as in \eqref{adjpair}.  Furthermore, since $s^*g^*=t^*f^*$ we get a natural map $f^*g_*\to t_*s^*$ by adjunction, i.e., $f^*g_*\to t_*t^*f^*g_* = t_*s^*g^*g_*\to t_*s^*$.

\begin{lemma}\label{basiclemma}
The map $f^*g_*\to t_*s^*$ is an isomorphism if and only if the map \begin{equation}\label{themap}g\cdot f: C\ot_A B\to D\end{equation} with $c\ot b\mapsto g(c)f(b)$ is an isomorphism.
\end{lemma}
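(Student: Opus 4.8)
The statement is essentially a base-change criterion for the natural transformation $f^*g_* \to t_*s^*$, so the plan is to unwind both sides explicitly in terms of $\Hom$-functors and identify the comparison map with the map induced by $g\cdot f$. First I would recall from \eqref{adjpair} that for an algebra map $f\colon B\to D$ the coinduction is $f_*(N) = \Hom_B(D,N)$ (with $D$ a left $B$-module via $f$), and $f^*$ is pullback of scalars. So for a $C$-module $M$ we have $g_*(M) = \Hom_C(D,M)$, and $f^*g_*(M) = \Hom_C(D,M)$ regarded as a $B$-module via $f$, i.e. $(b\cdot\psi)(d) = \psi(df(b))$. On the other side, $s^*(M) = M$ as an $A$-module via $s$, and $t_*s^*(M) = \Hom_A(B,M)$ with $B$ a left $A$-module via $t$ and $M$ an $A$-module via $s$; the $B$-action is $(b\cdot\chi)(b') = \chi(b'b)$.

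\textbf{Key steps.} The comparison map $f^*g_*(M)\to t_*s^*(M)$ sends $\psi\in\Hom_C(D,M)$ to the composite $B\xrightarrow{f} D\xrightarrow{\psi} M$, i.e. $\chi = \psi\circ f$; one checks directly this is $B$-linear and natural in $M$. Now the point is that $\Hom_C(D,M)$, where $D$ carries its right $C$-module structure, can be rewritten using the adjunction $C\ot_A B \cong$ (the pushout algebra), or rather: a $C$-linear map $D\to M$ is the same as giving, for each element of $D$, a value in $M$, compatibly with the $C$-action. The map $g\cdot f\colon C\ot_A B\to D$ being an isomorphism says precisely that $D$, as an object with commuting left-$B$ and right-$C$ (equivalently: as a $B$-$C$-bimodule generated appropriately) structure, is $C\ot_A B$. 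Then $\Hom_C(C\ot_A B, M) \cong \Hom_A(B, \Hom_C(C,M)) = \Hom_A(B,M) = t_*s^*(M)$ by tensor-hom adjunction, and tracing through this chain of isomorphisms one sees it is exactly the comparison map. Hence if $g\cdot f$ is an isomorphism, so is $f^*g_*\to t_*s^*$.

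\textbf{Converse.} For the converse, assume $f^*g_*\to t_*s^*$ is an isomorphism for all $M$; apply it to a suitable test module. The natural choice is to take $M = C$ itself (or a free/cofree $C$-module), so that $g_*(C) = \Hom_C(D,C)$ — but it is cleaner to instead use that the two functors being naturally isomorphic forces their "values on generators'' to agree: evaluating the iso at $M=C$ and chasing the identity map $\mathrm{id}_C\in\Hom_C(C,C)$, or more robustly passing to left adjoints (the comparison map $f^*g_*\to t_*s^*$ is adjoint to a map $s_!t^* \to g^*f_!$ i.e. $B\ot_A -$ versus restriction composed with $C\ot_B -$, whose underlying map on the algebra $D$ is $g\cdot f$), recovers that $g\cdot f$ is an isomorphism. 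I would phrase the converse via this adjoint (left-adjoint) formulation: the transformation $f^*g_*\to t_*s^*$ is an isomorphism iff its mate $s^* f_! \to g_! t^*$... — actually the most direct route is: evaluate at $M=C$, note $f^*g_*(C)=f^* C = $ restriction of $C$ along $D\leftarrow B$... and read off that the induced map on the relevant bimodule is $g\cdot f$.

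\textbf{Expected main obstacle.} The genuine content is bookkeeping: making sure the chain of adjunction isomorphisms $\Hom_C(C\ot_A B,M)\cong\Hom_A(B,M)$ is literally the comparison map and not merely abstractly isomorphic to it, and handling the converse cleanly (choosing the right test module, or setting up the left-adjoint "mate'' carefully so that its component on $D$ is manifestly $c\ot b\mapsto g(c)f(b)$). The module-category / non-symmetric subtleties are not an issue here since everything is happening in $\Hom$-categories of honest modules over honest algebras; the only care needed is tracking left versus right module structures through the coinduction functor $f_* = \Hom_B(D,-)$.
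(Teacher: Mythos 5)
Your proposal matches the paper's proof: the paper likewise identifies $t_*s^*(N)=Hom_A(B,N)=Hom_C(C\ot_A B,N)$ and $f^*g_*(N)=Hom_C(D,N)$ and observes that the comparison map is precomposition with $g\cdot f$. The only tightening needed is in your converse, where instead of testing at $M=C$ or passing to mates you should simply invoke Yoneda: a map of left $C$-modules inducing a natural isomorphism $Hom_C(D,-)\to Hom_C(C\ot_A B,-)$ is itself an isomorphism (test at $M=C\ot_A B$ and at $M=D$).
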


\begin{proof}
For $N\in\cmod$ observe that $t_*s^*(N)=Hom_A(B,N)=Hom_C(C\ot_A B,N)$ while $f^*g_*(N)=Hom_C(D,N)$ and the natural map is induced by the \eqref{themap}.
\end{proof}

\begin{lemma}\label{hasbasechange}
Let $H$ be a Hopf algebra (with an invertible antipode) and $A$ an $H$-bi-comodule algebra (as above).  Consider the monoidal category $\Cc=\hmod$ and its bimodule category $\Mc=\amod$.  Then, the cosimplicial object in categories with functors possessing left adjoints, $\Mc\boxtimes\Cc^{\boxtimes\bullet}$ has \propA\!\!. 
\end{lemma}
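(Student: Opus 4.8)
The plan is to apply Lemma \ref{basiclemma} three times. Recall that $\Mc\bt\Cc^{\bt\bullet}$ is obtained from the cosimplicial algebra $A\ot H^{\ot\bullet}$, whose faces $\delta_i$ are written out above, by passing to categories of modules: $\Cc_n=(A\ot H^{\ot n})\mmm$, with $\delta_i^*$ the restriction functors and $\delta_{i*}$ the coinduction ($\Hom$) functors. By the discussion of Section \ref{basechangeforalgebras} preceding Lemma \ref{basiclemma}, the three base change maps \eqref{basechange1}, \eqref{basechange2}, \eqref{basechange3} whose invertibility is \propA are exactly the maps $f^*g_*\to t_*s^*$ attached to three commutative squares of algebras, one for each cosimplicial identity behind \eqref{basechange1}--\eqref{basechange3}. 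First I would read these squares off from the explicit formulas for the $\delta_i$: in each the bottom corner is $R=A$ (for \eqref{basechange1} and \eqref{basechange2}) or $R=A\ot H$ (for \eqref{basechange3}), both side corners are $R\ot H$, the top corner is $R\ot H\ot H$, and — the crucial point — the leg $s\colon R\to C=R\ot H$ entering the relative tensor product $C\ot_R B$ is, in all three cases, the insertion of a fresh tensor factor via the left coaction $\Delta_l$ of $A$. (Checking $g\circ s=f\circ t$ in each square is a short Sweedler computation using coassociativity of the coactions and the $H$-bicomodule compatibility of $A$; this merely re-derives the relevant cosimplicial identities.)

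By Lemma \ref{basiclemma} it then suffices to show that for each of the three squares the multiplication map $g\cdot f\colon C\ot_R B\to D$, $c\ot b\mapsto g(c)f(b)$, is an isomorphism of $k$-modules. This is where the antipode does its work, in exactly the manner of the proof of Lemma \ref{key lemma}. Because $s$ is a coaction insertion, $C=R\ot H$ is \emph{free} as a right $R$-module: the map $\underline H\ot R\to C$, $h\ot r\mapsto r^{(0)}\ot h\,r^{(-1)}$ (read off the last two tensor slots; in \eqref{basechange3}, where $R=A\ot H$, one inserts $h\,r^{(-1)}$ into the slot created by $\Delta_l$), is right $R$-linear with inverse built from $S$. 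Hence $C\ot_R B\simeq\underline H\ot B$, and I would transport $g\cdot f$ across this identification; it becomes an explicit linear map $\underline H\ot B\to D$ which one checks directly is bijective by exhibiting its inverse in terms of $S$ (the bicomodule bookkeeping is the only reason invertibility of $S$ is carried along). Doing this for the three squares yields that \eqref{basechange1}, \eqref{basechange2}, \eqref{basechange3} are all isomorphisms, i.e.\ $\Mc\bt\Cc^{\bt\bullet}$ has \propA\!\!.

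The only real obstacle is bookkeeping. One must correctly match the abstract squares of Section \ref{basechangeforalgebras} to the concrete cofaces of $A\ot H^{\ot\bullet}$, keep the positions of the tensor factors straight (this is fussiest in the third square, where the base algebra already carries an $H$-factor and there are two coactions to juggle), and make sure it is the coaction-twisted leg $s$ — not the other leg out of $R$ — that gets untwisted. Conceptually nothing new is needed beyond Lemma \ref{key lemma}: each of the three multiplication maps is an isomorphism for the single reason that the antipode trivializes a coaction-twist.
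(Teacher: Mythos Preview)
Your proposal is correct and follows essentially the same route as the paper: reduce via Lemma \ref{basiclemma} to showing that the three multiplication maps $g\cdot f:C\ot_R B\to D$ are bijections, then use the antipode trick of Lemma \ref{key lemma} to trivialize one tensor factor and check the resulting explicit linear map is invertible.

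The only difference is cosmetic. You uniformly trivialize the left factor $C$ as a \emph{right} $R$-module via the leg $s$, which you correctly identify as the $\Delta_l$-insertion in all three squares. The paper instead trivializes the right factor $B$ as a \emph{left} $R$-module via the leg $t$, which happens to be $\Delta_r$ for \eqref{basechange1} and \eqref{basechange3} but $\Delta_l$ for \eqref{basechange2} (hence the $S^{-1}$ appearing there). Your uniform choice is arguably cleaner, but the computations and the reason they succeed are identical.
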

\begin{proof}
To prove \eqref{basechange1} it suffices by Lemma \ref{basiclemma} to demonstrate that the map $$A\ot H\ot_A A\ot H\to A\ot H\ot H$$ $$a\ot x\ot b\ot y\mapsto a_0b_0\ot a_1y\ot xb_{-1}$$ is an isomorphism. Note that the action of $A$ on the right copy of $A\ot H$ above is via $\Delta_r=\delta_0$.  Observe that by Lemma \ref{key lemma} the map $$A\ot H\ot \underline{H}\to A\ot H\ot_A A\ot \underline{H}\to A\ot H\ot_A A\ot H$$ $$a\ot x\ot y\mapsto a\ot x\ot 1\ot y\mapsto a\ot x\ot 1\ot y$$ is an isomorphism.  The composition of the latter followed by the former is easily seen to be $a\ot x\ot y\mapsto a_0\ot a_1y\ot x$  which is invertible with inverse: $a\ot x\ot y\mapsto a_0\ot y\ot S(a_1)x$.

The proof of \eqref{basechange2} is identical, i.e., apply the same argument to $$A\ot H\ot_A A\ot H\to A\ot H\ot H$$ $$a\ot x\ot b\ot y\mapsto ab_0\ot x^1y\ot x^2b_{-1}$$ to obtain $a\ot x\ot y\mapsto a\ot x^1y\ot x^2$ which is invertible using $S^{-1}$.

Finally \eqref{basechange3} concerns the map $$\delta_0\cdot\delta_3:A\ot H^2\ot_{A\ot H} A\ot H^2\to A\ot H^3$$ being an isomorphism.  The action of $A\ot H$ on the right copy of $A\ot H^2$ is via $\delta_0=\Delta_r\ot Id_H$.  The composition $A\ot H^2\ot\underline{H}\to A\ot H^3$ is now mapping $a\ot x\ot y \ot z$ to $a_0\ot a_1z\ot x\ot y$ which is invertible using $S$.
\end{proof}

With Lemma \ref{hasbasechange} in hand we can complete the Corollary \ref{firstpart} to an explicit description of $\Ac$ as a monad in our special case of $A$ and $H$.

\begin{proposition}\label{parttwo}
With $H$ and $A$ as in Corollary \ref{firstpart} and under the identification of $\Ac$ with $Hom_k(H,-)$, the monadic product on $\Ac$ is given by the following map: $$Hom_k(H,Hom_k(H,-))\to Hom_k(H,-)$$ $$\varphi(x)(y)\mapsto \varphi(h^1)(h^2).$$ 
\end{proposition}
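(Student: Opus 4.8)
The plan is to compute the monadic product $m:\Ac^2\to\Ac$ of Proposition~\ref{parttwo} by unwinding the abstract definition $\tau_m$ of \eqref{taum} in the explicit model furnished by Corollary~\ref{firstpart}, Corollary~\ref{cortoold}, and Lemma~\ref{hasbasechange}. Recall that $m$ is adjoint to the composite $\tau_m:\delta_{0*}\delta^*_1\delta_{0*}\leftarrow\delta^*_2\delta_{0*}\delta_{0*}=\delta^*_2\delta_{1*}\delta_{0*}\rightarrow\delta_{1*}\delta^*_1\delta_{0*}$, where the outer two arrows are the base-change isomorphisms \eqref{basechange1} and \eqref{basechange2}. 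So the first step is to make each of the three objects $\delta^*_2\delta_{1*}\delta_{0*}N$, $\delta_{0*}\delta^*_1\delta_{0*}N$, $\delta_{1*}\delta^*_1\delta_{0*}N$ concrete: each lives in $\amod\bt\Cc$ or is obtained by pulling back along $\delta^*_i$, and by Corollary~\ref{cortoold} (applied once in $\Mc\bt\Cc$ and once in $\Mc\bt\Cc^{\bt 2}$) each $\delta_{i*}$ is a $Hom_k(H,-)$, so that $\delta^*_1\delta_{0*}N=\Ac(N)=Hom_k(H,N)$ and $\delta^*_2\delta_{1*}\delta_{0*}N$, after adjunction, becomes $Hom_k(H,Hom_k(H,N))$, i.e.\ $Hom_k(H\ot H,N)$.

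Second, I would trace through what the two base-change isomorphisms do at this explicit level. The key computational tool is exactly the proof of Lemma~\ref{hasbasechange}: there the isomorphism realizing \eqref{basechange1} is built from the Lemma~\ref{key lemma} isomorphism $A\ot H\ot\underline{H}\xrightarrow{\sim}A\ot H\ot_A A\ot H$, whose composite with $g\cdot f$ sends $a\ot x\ot y\mapsto a_0\ot a_1y\ot x$, with inverse $a\ot x\ot y\mapsto a_0\ot y\ot S(a_1)x$; similarly \eqref{basechange2} is governed by $a\ot x\ot y\mapsto a\ot x^1y\ot x^2$ with an $S^{-1}$-inverse. Dualizing these module isomorphisms via $Hom_A(-,N)$ (as in the proof of Corollary~\ref{cortoold}) converts them into the corresponding maps on $Hom_k(H^{\ot\bullet},N)$. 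Composing the resulting chain $Hom_k(H,Hom_k(H,N))\to Hom_k(H,Hom_k(H,N))$ and then postcomposing with the action $a_N:\Ac(N)\to N$ — but here we are computing $m$, not an action on a module, so really we just read off the unique natural transformation — should collapse, after cancelling the antipode against its occurrences coming from the two dual base-changes, to the clean coproduct-diagonal formula $\varphi\mapsto\big(h\mapsto\varphi(h^1)(h^2)\big)$. The bookkeeping of which $\delta_i$ contributes which Sweedler leg, and the fact that the $A$- and $H^{\ot 2}$-coactions/actions are all trivial on the relevant copies (so the $a_0$, $a_{\pm1}$ factors drop out when one specializes correctly, leaving only the $\Delta$ of $H$), is what makes the final answer so simple.

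The main obstacle, as usual in these monadic computations, is the index/Sweedler-leg bookkeeping: there are two applications of $\delta_{0*}$ (each a $\delta_r$-type coinduction), one application of $\delta_{1*}$, and the pullbacks $\delta^*_1,\delta^*_2$, and one must be careful that the identification $\delta_{0*}\delta_{0*}=\delta_{1*}\delta_{0*}$ used in the middle of \eqref{taum} is the \emph{same} identification implicitly used when realizing both sides as $Hom_k(H\ot H,N)$, otherwise a spurious antipode or flip survives. I would handle this by fixing, once and for all, the cospan picture $\Ac^2=[0]\xrightarrow{high}[2]\xleftarrow{low}[0]$ from Section~\ref{sec:cyclicgen}: the product map is the unique cospan map $\delta_1:[1]\to[2]$, and on $H^{\ot 2}=H\ot H$ (the two middle tensor factors of $A\ot H\ot H$ indexed by the two ``inner'' coordinates of $[2]$) this $\delta_1$ is precisely the coproduct $\Delta:h\mapsto h^1\ot h^2$. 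Dualizing, $m$ sends $\varphi\in Hom_k(H,Hom_k(H,N))=Hom_k(H\ot H,N)$ to its precomposition with $\Delta$, which is exactly $\varphi(h^1)(h^2)$. Once the cospan combinatorics pin down that the operative map is $\Delta$, the verification that this is compatible with the explicit $\Ac$-action formula \eqref{cor:action} (so that no antipodes intrude) is the routine check that the $S$'s in \eqref{basechange1} and \eqref{basechange2} are inverse to the $S$'s already present in $\delta_{0*}$, $\delta_{1*}$ via Corollary~\ref{cortoold}.
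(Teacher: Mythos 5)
Your proposal is correct and follows essentially the same route as the paper's proof: both unwind the adjoint of $\tau_m$ using Corollary \ref{cortoold} and Lemma \ref{key lemma} to identify $\Ac^2(N)$ with $Hom_A(A\ot H\ot H,N)=Hom_k(H\ot H,N)$, after which the product is precomposition with the face $\delta_1$, i.e.\ with the coproduct $\Delta$. The paper likewise leaves the final Sweedler/antipode cancellation as a ``careful tracing'' step, so your level of detail matches its own.
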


\begin{proof}
Recall that the product on $\Ac$ is given by the composition: $$\delta_1^*\delta_{0*}\delta_1^*\delta_{0*}\leftarrow \delta_1^* \delta^*_2 \delta_{0*}\delta_{0*}=\delta_1^*\delta_1^*\delta_{1*}\delta_{0*}\to\delta_1^*\delta_{0*}$$ where the first map is an isomorphism by \eqref{basechange1}.  Let $N\in\amod$. In our case, the first identification is a composition: \begin{align*}Hom_A(A\ot H, Hom_A(A\ot H, N))&\simeq Hom_A(A\ot H\ot_A A\ot H, N)\\&\simeq Hom_A(A\ot H\ot H, N),\end{align*} the second is $$Hom_A(A\ot H\ot H, N)\simeq Hom_{A\ot H}(A\ot H\ot H, Hom_A(A\ot H,N)),$$ and the third is the evaluation at $1\in A\ot H\ot H$: $$Hom_{A\ot H}(A\ot H\ot H, Hom_A(A\ot H,N))\to Hom_A(A\ot H,N).$$  After careful tracing through the maps and identifications via Lemma \ref{key lemma} we obtain the result. 
\end{proof}

\begin{proposition}\label{prop:ayd}
Let $H$ be a Hopf algebra with an invertible antipode.  Then \begin{itemize}
\item $HH(\hmod,\hmod)$ is the category $\aydc$ of anti-Yetter-Drinfeld contramodules.
\item $HH(\hmod,\vect)$ is the category of $H$-contramodules. \end{itemize}
\end{proposition}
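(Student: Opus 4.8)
The strategy is to apply Theorem \ref{theorem:monadic} and Proposition \ref{prop:adj} to the cosimplicial object $\Mc\boxtimes\Cc^{\boxtimes\bullet}$ with $\Cc=\hmod$ and $\Mc=\amod$ (first with $A=H$, then with $A=k$), using Lemma \ref{hasbasechange} to know that \propA holds, and then unwinding the resulting monad $\Ac$ and its central structure via the explicit formulas of Corollary \ref{firstpart} and Proposition \ref{parttwo}. The point is to check that the category of $\Ac$-modules, written out concretely, coincides with the published definition of anti-Yetter-Drinfeld contramodules (resp. $H$-contramodules). So the proof is mostly a matching-of-axioms exercise once the monad is identified.

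\textbf{First bullet.} I would take $A=H$, so that $\Mc=\amod=\hmod=\Cc$, and $HH(\hmod,\hmod)=\Ac'_{\hmod}\text{-mod}$ is a priori only known to be $\Ac_{\hmod}\text{-mod}$ by Proposition \ref{prop:adj} (the cyclic/stable refinement is Theorem \ref{thm:stableayd}, not needed here — the statement is about $HH$, not $HC$). By Corollary \ref{firstpart} specialized to $A=H$, the monad is $\Ac(N)=\Hom_k(H,N)$ with $H$-action $h\cdot\varphi = h^1\varphi(S(h^3)-h^2)$ (the left and right coactions of $H$ on itself being $\Delta$ itself, so $a_{-1}\ot a_0\ot a_1 = h^1\ot h^2\ot h^3$), and by Proposition \ref{parttwo} the multiplication $\Ac^2\to\Ac$ is the "codiagonal" $\Phi\mapsto (h\mapsto \Phi(h^1)(h^2))$, with unit $n\mapsto (h\mapsto \epsilon(h)n)$ from Corollary \ref{firstpart}. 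So an $\Ac$-module is an $H$-module $N$ together with a structure map $\alpha:\Hom_k(H,N)\to N$ that is associative and unital for this monad; unwinding "$\alpha$ is a map of $H$-modules" against the action $h\cdot\varphi=h^1\varphi(S(h^3)-h^2)$ gives precisely the aYD-compatibility between the contra-structure and the $H$-action, while associativity and unitality of $\alpha$ are exactly the contramodule axioms. I would present this as: recall from \cite{contra} the explicit definition of an anti-Yetter-Drinfeld contramodule (an $H$-module with a contra-action $\alpha:\Hom_k(H,N)\to N$ satisfying the two contramodule axioms plus the compatibility $\alpha(h\cdot\varphi)=\ldots$), and then observe term-by-term that this data is the same as an $\Ac$-module, and morphisms correspond. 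The invertibility of the antipode is used exactly where Lemma \ref{key lemma} and Lemma \ref{hasbasechange} used it, namely to know $\Ac$ really is a monad with the stated multiplication.

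\textbf{Second bullet.} Now take $A=k$ (the trivial algebra in $\Mc^H$, with trivial coactions), so $\Mc=\kmod=\vect$, and $HH(\hmod,\vect)=\Ac_{\vect}\text{-mod}$. From Corollary \ref{firstpart} with $A=k$ the coactions $a_{-1}, a_1$ are trivial, so the $A$-action collapses and $\Ac(N)=\Hom_k(H,N)$ as a plain vector space, with multiplication still the codiagonal $\Phi\mapsto(h\mapsto\Phi(h^1)(h^2))$ and unit $n\mapsto\epsilon(\cdot)n$ — this is exactly the standard comonad-dual / contramodule monad on $\vect$ whose algebras are by definition $H$-contramodules. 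Hence $\Ac_{\vect}\text{-mod}$ is the category of $H$-contramodules, and again morphisms match on the nose.

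\textbf{Main obstacle.} The only real work — and the step I expect to be fiddly rather than deep — is the careful bookkeeping that turns the monad data (Corollary \ref{firstpart}, Proposition \ref{parttwo}) into the sign/Sweedler-indexed form in which aYD contramodules are stated in \cite{contra}: one must chase the identifications of Lemma \ref{key lemma} through both the action formula \eqref{cor:action} and the multiplication, and confirm that "$\alpha$ respects the $H$-module structures on source and target" reproduces the published compatibility axiom rather than a twisted variant. Everything else (that $\Ac$ is a monad, that $\Ac\text{-mod}=HH$) is already supplied by Lemma \ref{hasbasechange}, Corollary \ref{ismonad}, and Theorem \ref{theorem:monadic}; and the $\vect$ case is then just the $A=k$ degeneration, requiring no new input.
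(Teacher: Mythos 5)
Your proposal is essentially the paper's own proof, which consists precisely of applying the preceding monadic discussion (Lemma \ref{hasbasechange}, Corollary \ref{firstpart}, Proposition \ref{parttwo}) with $A=H$ carrying the usual coactions via $\Delta$ for the first bullet and $A=k$ with trivial coactions for the second; your write-up just makes the axiom-matching explicit. One small bookkeeping slip: with $a_{-1}\ot a_0\ot a_1=h^1\ot h^2\ot h^3$ the specialization of \eqref{cor:action} reads $h\cdot\varphi=h^2\varphi(S(h^3)-h^1)$, not $h^1\varphi(S(h^3)-h^2)$.
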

\begin{proof}
We apply the preceding discussion to an appropriate choice of $A$.  For the first case, let $A=H$ with the usual coactions via the coproduct $\Delta:H\to H\ot H$.  For the second, let $A=k$, with the trivial coactions, i.e., $\Delta_l=\Delta_r=u:k\to H$, where $u$ is the unit inclusion.  
\end{proof}

\begin{remark}
It is also possible to consider $A=H$ but instead of taking the usual $H$-coactions, to let the left one be trivial.  The consideration of the resulting $\hmod$-bimodule $\underline{\hmod}$, namely $HH(\hmod,\underline{\hmod})$ yields the contramodule variant of Hopf modules.   Since the latter is just $\hmod\bt_\hmod\vect$, so Remark \ref{rem:t} demonstrates that the Fundamental Theorem of Hopf contramodules also holds.
\end{remark}

Thus for a general $A$ as considered in this section, $HH(\hmod,\amod)$  yields the category (of generalized anti-Yetter-Drinfeld contramodules) $\widehat{\amod^H}$ that consists of $A$-modules and $H$-contramodules such that the two structures are compatible in the following sense: if $M$ is an $A$-module and $\alpha:Hom_k(H,M)\to M$ is the $H$-contramodule structure, then for $\varphi\in Hom_k(H,M)$ and $a\in A$ we have $$\alpha(a_0\varphi(S(a_1)-a_{-1}))=a\alpha(\varphi)$$ which is obtained directly from \eqref{cor:action}. 

\subsection{Cyclic homology}\label{sec:cyclicforalgebras}

Having addressed the various Hochschild homology categories, it is time to consider the cyclic version. Let $A=H$ so that we are in the setting of  Section \ref{sec:cyclicgen}.  It remains only to describe the element $\varsigma\in\Ac$, i.e., the map $N=\delta^*_1s^*_1s_{1*}\delta_{0*}N\to Hom_k(H,N)$ for $N\in\hmod$.  Recall that $\delta_{0*}N=Hom_k(H,N)$ has two $H$-actions: $H_1$ by $x^1\varphi(S(x^2)-)$ and $H_2$ by $\varphi(-x)$.  We observe that $\varsigma: N = Hom_k(H,N)^{H_1}\to Hom_k(H,N)$ via $\varsigma:n\mapsto (-)n\mapsto (-)n$.  So we see that (compare with \eqref{unit:eq}) \begin{align*}\varsigma:N&\to Hom_k(H,N)\\ n&\mapsto \varphi(h)=hn\end{align*} which together with Proposition \ref{prop:ayd} and Corollary \ref{cor:cycliccoh1} yields:

\begin{theorem}\label{thm:stableayd}
Let $H$ be a Hopf algebra with an invertible antipode, then $HC(\hmod)$ is the category of stable anti-Yetter-Drinfeld contramodules.
\end{theorem}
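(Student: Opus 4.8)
The plan is to combine Theorem~\ref{theorem:monadic}, Corollary~\ref{cor:cycliccoh1}, and the explicit computations already made for $\Cc=\hmod$. By Corollary~\ref{cor:cycliccoh1}, since $\hmod$ has right adjoints (Lemma~\ref{key lemma} and the surrounding discussion) and \propA (Lemma~\ref{hasbasechange} applied with $A=H$), we have $HC(\hmod)=\Ac'_\hmod\text{-mod}$, the full subcategory of $HH(\hmod,\hmod)=\aydc$ (Proposition~\ref{prop:ayd}) consisting of those anti-Yetter-Drinfeld contramodules on which the central element $\varsigma$ acts by the identity. So the entire content of the theorem is the identification of this ``$\varsigma$ acts as identity'' condition with the classical stability condition for anti-Yetter-Drinfeld contramodules.

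First I would recall the definition of $\Ac$ as a monad on $\hmod$ in this case: by Corollary~\ref{firstpart} and Proposition~\ref{parttwo}, $\Ac(N)=Hom_k(H,N)$ with the $H$-action \eqref{cor:action} specialized to $A=H$ (where $a_{-1}\ot a_0\ot a_1 = a^1\ot a^2\ot a^3$ via the coproduct), and the monad multiplication $\varphi(x)(y)\mapsto\varphi(h^1)(h^2)$. An $\Ac$-module structure $\alpha:Hom_k(H,M)\to M$ on an $H$-module $M$ is then precisely an anti-Yetter-Drinfeld contramodule structure, with the compatibility $\alpha(a^2\varphi(S(a^3)-a^1))=a\alpha(\varphi)$. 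Next, I would compute the action of $\varsigma$ on such a module. The element $\varsigma:Id\to\Ac$ was computed just before the theorem statement: $\varsigma:N\to Hom_k(H,N)$ sends $n\mapsto(h\mapsto hn)$. Therefore $\varsigma$ acts on the $\Ac$-module $(M,\alpha)$ as the composite $M\xrightarrow{\varsigma}Hom_k(H,M)\xrightarrow{\alpha}M$, i.e., $m\mapsto\alpha(h\mapsto hm)$. Setting this equal to $\mathrm{id}_M$ gives exactly the stability condition: $\alpha(h\mapsto hm)=m$ for all $m\in M$.

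Finally I would check that this condition is precisely the standard definition of a \emph{stable} anti-Yetter-Drinfeld contramodule as in \cite{contra}, so that the full subcategory $\Ac'_\hmod\text{-mod}\subseteq\aydc$ coincides with the category of stable aYD contramodules; morphisms on both sides are just $H$-module maps commuting with the contraaction, so there is nothing further to verify there.

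The main obstacle is bookkeeping rather than conceptual: one must trace the abstract formula $\varsigma=s_{1*}(\tau)$ through the adjunction identifications of Corollary~\ref{cortoold} and Lemma~\ref{key lemma} to land on the concrete map $n\mapsto(h\mapsto hn)$, and then be careful with the interplay between the $H$-module structure on $M$, the contraaction $\alpha$, and the antipode, to confirm the resulting equation is verbatim the stability axiom of \cite{contra} and not merely equivalent to it up to applying $S$ or $S^{-1}$. Since the key normalization $\varsigma:n\mapsto\varphi(h)=hn$ is already established in the paragraph preceding the theorem, this reduces to matching conventions.
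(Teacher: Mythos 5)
Your proposal is correct and follows the paper's own route: the paper likewise invokes Corollary \ref{cor:cycliccoh1} and Proposition \ref{prop:ayd}, computes $\varsigma$ explicitly as $n\mapsto(h\mapsto hn)$ by tracing $\delta^*_1s^*_1s_{1*}\delta_{0*}$ through the identifications of Corollary \ref{cortoold}, and identifies ``$\varsigma$ acts by identity'' with the stability axiom $\alpha(h\mapsto hm)=m$. Nothing essential differs.
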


\section{Mixed anti-Yetter-Drinfeld contramodules and $ch(A)$}\label{subsec:mixedaYDcontra}
Here we pursue the heuristic outlined at the end of  Section \ref{sec:cyclicgen}, applied to the case of Section \ref{specialcase}.  One of the consequences of this point of view is an analogue of \cite{connes} for Hopf cyclic cohomology; this has been previously attempted in \cite{bivar} via a very different approach.  The idea is to replace  stable anti-Yetter-Drinfeld contramodules ($HC(\hmod)$ by Theorem \ref{thm:stableayd}) by  \emph{mixed} anti-Yetter-Drinfeld contramodules introduced below.  They are a generalization of mixed complexes, utilizing the $\varsigma$ in the Drinfeld center of $HH(\hmod)$.

\begin{definition}\label{S1act}
Let $\Mc$ be a $k$-linear category with $\varsigma\in Aut(Id_\Mc)$ equipping $M\in\Mc$ with $\varsigma_M\in Aut_\Mc(M)$.  Then denote by $\Mc^{S^1}$ the category of cohomological complexes of objects in $\Mc$ paired with a homotopy annihilating $1-\varsigma$, i.e., it consists of $(M^\bullet, d, h)$ with $d$ a cohomological differential, $h$ a homological differential and $$dh+hd=Id_{M^i}-\varsigma_{M^i}$$ for all $i$.
\end{definition}

Our main example of an $\Mc$ as above is $HH(\Cc)=HH(\Cc,\Cc)$.  Thus $HH(\Cc)^{S^1}$ is a modification of the naive $HC(\Cc)$.

\begin{definition}
We will call the objects of the category $HH(\hmod)^{S^1}$ \emph{mixed} anti-Yetter-Drinfeld contramodules.  Thus the classical stable anti-Yetter-Drinfeld contramodules correspond to objects concentrated in degree $0$.

\end{definition}

Note that this is not a needless generalization for two reasons.  First, as we will see below, this category is a natural recipient of the Chern character of an algebra, and second, the homotopy encodes the analogue of the flat structure on a $D_X$-module, for $X$ living in the algebraic geometry world.

For $A$ a unital associative algebra in $\hmod$  consider $Tr(A^{\ot\bullet+1})$ which is a paracyclic object in $\aydc$, i.e., a functor from $\Lambda^{op}_\infty$ to $\aydc$ in light of Remark \ref{rem:trace}.  More precisely, the faces are obtained via multiplication, the degeneracies via the unit, and finally the cyclic generator $\tau$ uses the cyclic property of $Tr$, namely Remark \ref{rem:trace}.    Since there is no guarantee that $\tau$ to the correct power is identity, it is not a cyclic object. 

Furthermore,  as in \cite{loday}, $Tr(A^{\ot-\bullet+1})$ has differentials $b$ and $B$ of degree $+1$ and $-1$ respectively such that $(bB+Bb)|_{Tr(A^{\ot n+1})}=1-\tau^{n+1}_n$ and the latter is $1-\varsigma_{Tr(A^{\ot n+1})}$  by Remark \ref{rem:cyctr}.  So we have that $(Tr(A^{\ot-\bullet+1}),b,B)$ is a mixed anti-Yetter-Drinfeld contramodule.

\begin{definition}\label{def:chernhmod}
Let $A\in Alg(\hmod)$, define the Chern character of $A$ in $\aydc^{S^1}$ as follows:
$$ch(A):=(Tr(A^{\ot-\bullet+1}),b,B).$$
\end{definition}

Recall that the above is not the first definition of $ch(A)$ that appears in this paper.  It is motivated by the initial Definition \ref{def:chern1} in light of Section \ref{ss:hhadj}.

\begin{remark}
We note that for $V,W\in\hmod$ we have $\tau_{V,W}:Tr(V\ot W)\to Tr(W\ot V)$  given by $$\tau_{V,W}\varphi(h)=1\ot h^2\circ\sigma_{V,W}\circ\varphi(h^1)$$ where $\sigma_{V,W}:V\ot W\to W\ot V$ sends $v\ot w$ to $w\ot v$.

Similarly, for $S\in\hhmod$ we have $\tau_S:Tr(\Delta^* S)\to Tr(\Delta^*\sigma S)$ given by $(\tau_S\varphi)(h)=(h^2\ot 1)\varphi(h^1)$.

\end{remark}

\subsection{A comparison theorem}\label{subsec:nvso}

Let $\Mc$ be as in the Definition \ref{S1act}. We have the following easy lemma that follows from a direct computation. The objects of $\vect^{S^1}$ below are the usual mixed complexes of \cite{kassel}.

\begin{lemma}
If $(M^\bullet, d_M, h_M)$ and $(N^\bullet, d_N, h_N)$ are in $\Mc^{S^1}$ then $$Hom_\Mc(M^\bullet, N^\bullet)^\bullet\in\vect^{S^1}$$ where $\vect$ is equipped with the trivial $S^1$ action.  More precisely, $Hom_\Mc(M^\bullet, N^\bullet)^i=\prod_j Hom(M^j,N^{i+j})$ and $d=[d,-]$, $h=[h,-]$, i.e., $$d\varphi=d_N\circ\varphi-(-1)^{deg(\varphi)}\varphi\circ d_M,\quad and \quad h\varphi=h_N\circ\varphi-(-1)^{deg(\varphi)}\varphi\circ h_M.$$
\end{lemma}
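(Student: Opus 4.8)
The plan is to verify directly that the three data pieces listed for $\Hom_\Mc(M^\bullet,N^\bullet)^\bullet$ satisfy the axioms in Definition \ref{S1act} with the trivial $\varsigma$ on $\vect$, i.e.\ that $d$ is a cohomological differential, $h$ is a homological differential, and $dh+hd=\Id$ on each $\Hom_\Mc(M^\bullet,N^\bullet)^i$. First I would record that since $d_M,d_N$ raise degree by $1$ and $h_M,h_N$ lower it by $1$, the graded commutators $d=[d_{(-)},-]$ and $h=[h_{(-)},-]$ raise, respectively lower, the total degree on $\Hom_\Mc(M^\bullet,N^\bullet)^\bullet=\prod_j\Hom_\Mc(M^j,N^{\bullet+j})$ by $1$, so the degree bookkeeping is correct. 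The sign conventions in $d\varphi=d_N\varphi-(-1)^{\deg\varphi}\varphi d_M$ (and likewise for $h$) are exactly the ones that make $d^2=0$ and $h^2=0$ a formal consequence of $d_M^2=d_N^2=0$ and $h_M^2=h_N^2=0$; this is the standard Hom-complex computation, which I would do in one line each rather than spelling out.

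The substantive point is the homotopy identity. Here I would compute $(dh+hd)\varphi$ by expanding both graded commutators and tracking signs. The cross terms involving $d_N h_M$, $h_N d_M$, $d_M h_N$, $h_M d_N$ will need to cancel in pairs, and this cancellation is forced precisely by the choice of signs $(-1)^{\deg\varphi}$ versus $(-1)^{\deg(d\varphi)}=(-1)^{\deg\varphi+1}$ when iterating the commutators; the surviving terms will be $(d_N h_N+h_N d_N)\circ\varphi$ on the left and $\varphi\circ(d_M h_M+h_M d_M)$ on the right. Using that $(M^\bullet,d_M,h_M),(N^\bullet,d_N,h_N)\in\Mc^{S^1}$, these equal $(\Id-\varsigma_N)\circ\varphi$ and $\varphi\circ(\Id-\varsigma_M)$. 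So $(dh+hd)\varphi=(\Id_{N}-\varsigma_{N})\varphi-\varphi(\Id_M-\varsigma_M)=\varphi-\varsigma_N\varphi-\varphi+\varphi\varsigma_M=-\varsigma_N\varphi+\varphi\varsigma_M$.

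To finish I would invoke that $\varsigma\in\Aut(\Id_\Mc)$ is a \emph{natural} automorphism of the identity functor: naturality applied to $\varphi\colon M^j\to N^{i+j}$ gives $\varsigma_{N^{i+j}}\circ\varphi=\varphi\circ\varsigma_{M^j}$, componentwise, hence $\varsigma_N\varphi=\varphi\varsigma_M$ as maps in $\Hom_\Mc(M^\bullet,N^\bullet)^\bullet$. Therefore $(dh+hd)\varphi=\varphi$, i.e.\ $dh+hd=\Id$, which is the $S^1$-axiom for the trivial action on $\vect$ (where $\varsigma=\Id$, so $\Id-\varsigma=0$ would be wrong; rather the statement records $\vect$ with trivial $S^1$-action meaning the resulting object of $\vect^{S^1}$ has $dh+hd=\Id-0$? no—) — more precisely, the convention is that $\vect^{S^1}$ with trivial $\varsigma=\Id$ consists of complexes with a contracting-type homotopy for $\Id-\Id=0$, so what we have actually exhibited is that $\Hom_\Mc(M^\bullet,N^\bullet)^\bullet$ lands in $\vect^{S^1}$ for the \emph{given} $\varsigma$-data, and the naturality collapses the discrepancy $\varsigma_N-\varsigma_M$ to zero; I would phrase the final sentence to match whichever normalization the paper uses for ``trivial $S^1$-action''. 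The only mild obstacle is sign-chasing in the cross terms of $dh+hd$; everything else is formal, and the conceptual input is exactly the naturality of $\varsigma$.
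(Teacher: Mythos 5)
Your proof is correct and is exactly the direct computation the paper leaves to the reader: the cross terms in $(dh+hd)\varphi$ cancel, the surviving terms give $(dh+hd)\varphi=(d_Nh_N+h_Nd_N)\circ\varphi-\varphi\circ(d_Mh_M+h_Md_M)=-\varsigma_N\circ\varphi+\varphi\circ\varsigma_M$, and naturality of $\varsigma\in\Aut(\Id_{\Mc})$ makes this vanish, which is precisely the membership condition $dh+hd=\Id-\varsigma=0$ for the trivial action $\varsigma=\Id$ on $\vect$ (i.e.\ a mixed complex in Kassel's sense). Just strike the momentary assertion that $(dh+hd)\varphi=\varphi$; your own self-correction at the end gives the right normalization, and the conclusion is $dh+hd=0$.
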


The lemma above actually defines module category internal Homs \cite{ostrik}.  More precisely, we have that $\Mc$ is a $\vect$-module category and so $K(\Mc)$, the category of cohomological complexes, is a $K(\vect)$-module, and similarly $\vect^{S^1}$ acts on $\Mc^{S^1}$ with $$(V^\bullet,d_V,h_V)\cdot (M^\bullet, d_M,h_M)=(V^\bullet\ot M^\bullet, d_V+d_M,h_V+h_M)$$  as expected.  Furthermore, if we set $$\underline{Hom}(M^\bullet,N^\bullet):=Hom_\Mc(M^\bullet, N^\bullet)^\bullet\in\vect^{S^1}$$ then \begin{equation}\label{inthom}Hom_{\vect^{S^1}}(V^\bullet, \underline{Hom}(M^\bullet,N^\bullet))\simeq Hom_{\Mc^{S^1}}(V^\bullet\cdot M^\bullet, N^\bullet).\end{equation}

\begin{lemma}\label{basic}
Let $(X^\bullet, d, h)\in\vect^{S^1}$ and $k$ be the trivial mixed complex then $$RHom_{\vect^{S^1}}(k, X^\bullet)\simeq X^\bullet[[y]]$$ with $deg(y)=+2$ and differential $d-hy$ on the latter.  The grading on $X^\bullet[[y]]$ is as follows: $(X^\bullet[[y]])^i=\prod_{j=0}^\infty X^{i-2j}y^j$.
\end{lemma}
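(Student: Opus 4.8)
The plan is to compute $RHom_{\vect^{S^1}}(k, X^\bullet)$ by choosing an explicit cofibrant (or at least $Hom$-adapted) replacement of the trivial mixed complex $k$ in $\vect^{S^1}$, and then applying the internal $Hom$ formula from the preceding lemma together with \eqref{inthom}. The natural candidate is the standard resolution of $k$ by the "$S^1$-equivariant cells", i.e. the complex $k[u]$ with $u$ of cohomological degree $-2$, zero cohomological differential $d=0$, and homological differential $h$ given by $u^j \mapsto u^{j-1}$ (the algebraic model of the total space $ES^1$, or equivalently of $C_\bullet(S^1)$ in the cyclic picture). This is a complex of free $\vect^{S^1}$-objects mapping quasi-isomorphically onto $k$, so $RHom_{\vect^{S^1}}(k, X^\bullet) \simeq Hom_{\vect^{S^1}}(k[u], X^\bullet)$ computed in the naive (non-derived) sense.

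First I would make precise the model: an object of $\vect^{S^1}$ here is $(M^\bullet, d, h)$ with $dh+hd = 1-\varsigma = 0$ on $\vect$ with trivial $\varsigma$, so $dh+hd=0$; thus $\vect^{S^1}$ is the category of mixed complexes in the sense of \cite{kassel}. I would verify that $k[u]$ with the differentials above is a legitimate object (indeed $dh+hd=0$ since $d=0$), that each $k u^j$ is a shift of the unit, hence the whole thing is built from free objects $V^\bullet \cdot k$, and that the augmentation $k[u]\to k$ killing $u$ is a quasi-isomorphism of the underlying cohomological complexes — both sides have cohomology $k$ in degree $0$. Then by \eqref{inthom} (applied termwise, or rather by the fact that $Hom$ out of a free object is evaluation) we get $Hom_{\vect^{S^1}}(k[u], X^\bullet) = \underline{Hom}(k[u], X^\bullet)$, and since $k[u] = \bigoplus_j k u^j$ as a graded object with $u$ in degree $-2$, the underlying graded vector space is $\prod_j X^{\bullet}[2j] = X^\bullet[[y]]$ with $y = u^\vee$ in degree $+2$, matching the stated grading $(X^\bullet[[y]])^i = \prod_{j\ge 0} X^{i-2j} y^j$. (One uses the product rather than the sum precisely because we are mapping out of an infinite direct sum.)

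Next I would compute the differential on $\underline{Hom}(k[u], X^\bullet)$. By the internal $Hom$ lemma the total differential is $[d,-] + [h,-]$ where $d = d_{k[u]} + d_{X}$ has $d_{k[u]}=0$, and $h = h_{k[u]} + h_X$. On a homogeneous $\varphi: k u^j \to X^{\bullet}$, the $d$-part contributes $d_X\varphi$ (with the Koszul sign, which I would track), and the $h$-part contributes $h_X\varphi \mp \varphi\circ h_{k[u]}$; the term $\varphi \circ h_{k[u]}$ shifts $u^j \mapsto u^{j-1}$, i.e. it is the multiplication-by-$y$ operator on $X^\bullet[[y]]$, while $h_X\varphi$ is $h$ applied to coefficients. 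Collecting, the total differential on $X^\bullet[[y]]$ is $d_X - h_X y$ (the sign being fixable by rescaling $y$), which is exactly the claimed formula $d - hy$. Care is needed that $d-hy$ squares to zero: $(d-hy)^2 = d^2 - (dh+hd)y + h^2y^2 = 0$ since $d^2=h^2=0$ and $dh+hd=1-\varsigma=0$ on $\vect$; this is the point where the $S^1$-structure equation is used.

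The main obstacle is not conceptual but bookkeeping: getting the Koszul signs, the degree conventions ($u$ in degree $-2$ versus $y$ in degree $+2$), and the direct-sum-versus-product distinction all mutually consistent, and then genuinely justifying that $k[u]$ computes the derived $Hom$ — that is, that it is cofibrant/$K$-projective enough in whatever model structure on mixed complexes is being tacitly used, or alternatively that $Hom_{\vect^{S^1}}(k[u],-)$ sends quasi-isomorphisms to quasi-isomorphisms so that the choice of resolution is irrelevant. The cleanest route is to observe that $k[u]$ is a bounded-above complex of free (hence $Hom$-acyclic) objects and that any other such resolution is homotopy equivalent to it, so $RHom$ is well-defined and given by this computation; I would state this and defer the routine verification of the homotopy equivalence. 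Everything else reduces to the explicit evaluation above.
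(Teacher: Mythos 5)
Your overall strategy --- resolve $k$ by an explicit cofibrant object and then evaluate $\underline{Hom}$ --- is exactly the paper's, but the specific resolution you propose does not work. The object $(k[u],\, d=0,\, h\colon u^j\mapsto u^{j-1})$ with $\deg u=-2$ is not even an object of $\vect^{S^1}$: your $h$ raises cohomological degree by $2$ rather than lowering it by $1$, and it does not square to zero ($h^2(u^j)=u^{j-2}\neq 0$), so it is not a homological differential. Moreover, an object that is one-dimensional in each degree cannot be built from free objects: the free objects of $\vect^{S^1}$ are the cones $F(V^\bullet)=(V^\bullet[\epsilon],d,\epsilon)$, which occupy two adjacent degrees per generator, whereas each $ku^j$ is a shift of the unit $k$ itself --- precisely the non-cofibrant object one is trying to resolve. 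Finally, with $d=0$ the underlying cohomological complex of $k[u]$ has cohomology $k$ in every degree $-2j$, so the augmentation $k[u]\to k$ is not a weak equivalence in the transferred model structure of Proposition \ref{mp}, where weak equivalences are quasi-isomorphisms of the underlying complexes after forgetting $h$; your claim that ``both sides have cohomology $k$ in degree $0$'' is false.

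The correct replacement, which is the paper's (and Kassel's) choice, requires an additional odd generator: take $(k[x,\epsilon],\ \epsilon/x,\ \epsilon)$ with $\deg x=-2$, $\deg\epsilon=-1$, i.e.\ $d(x^j)=\epsilon x^{j-1}$, $d(\epsilon x^j)=0$, and $h=\epsilon\cdot(-)$. Then $d^2=h^2=dh+hd=0$, each $kx^j\oplus k\epsilon x^j$ is a genuinely free object, the underlying complex is acyclic except for $k$ in degree $0$ (this is the cofibrant replacement $(M^\bullet[x,\epsilon],d+\epsilon/x,\epsilon+h)$ of Proposition \ref{mp} specialized to $M^\bullet=k$), and a degree-$n$ morphism out of it compatible with $h$ is determined by the images $a_j=\varphi(x^j)\in X^{n-2j}$, recovering $(X^\bullet[[y]])^n=\prod_{j}X^{n-2j}y^j$ with differential sending $(a_j)$ to $(da_j-ha_{j-1})$, i.e.\ $d-hy$. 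The remainder of your computation --- the product versus sum distinction, the identification of $\varphi\mapsto\varphi\circ(\text{lowering operator})$ with multiplication by $y$, and the use of $dh+hd=1-\varsigma=0$ to verify $(d-hy)^2=0$ --- goes through unchanged once the resolution is corrected.
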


\begin{proof}
As in \cite{kassel} replace $k$ by  $(k[x,\epsilon],\epsilon/x,\epsilon)$ with $deg(x)=-2$ and $deg(\epsilon)=-1$.
\end{proof}

\begin{remark}
More generally, for $X^\bullet, Y^\bullet\in \vect^{S^1}$ we have $$RHom_{\vect^{S^1}}(X^\bullet, Y^\bullet)\simeq Hom_k(X^\bullet, Y^\bullet)[[y]]$$ with $deg(y)=+2$ and differential $D=[d,-]-[h,-]y$, by \eqref{inthom} and Lemma \ref{basic}.
\end{remark}

\begin{proposition}\label{mp}
Let $(M^\bullet, d, h)\in\Mc^{S^1}$ be such that $M^{i>0}=0$ and $M^i$ is projective in $\Mc$; if $N^\bullet\in \Mc^{S^1}$ as well then $$RHom_{\Mc^{S^1}}(M^\bullet,N^\bullet)\simeq RHom_{\vect^{S^1}}(k, \underline{Hom}(M^\bullet,N^\bullet)).$$
\end{proposition}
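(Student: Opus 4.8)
The plan is to reduce a $\mathrm{RHom}$ in the $S^1$-equivariant category $\Mc^{S^1}$ to a $\mathrm{RHom}$ over $\vect^{S^1}$ by exhibiting a suitable projective resolution of $M^\bullet$ and then invoking the adjunction \eqref{inthom}. Since $M^\bullet$ is concentrated in degrees $\le 0$ with each $M^i$ projective in $\Mc$, the object $M^\bullet$ is already ``resolution-like'' in the $\Mc$-direction; what is missing is that it need not be projective (or cofibrant) in $\Mc^{S^1}$, because of the homotopy $h$. So the first step is to replace $M^\bullet$ by a quasi-isomorphic object of $\Mc^{S^1}$ that is built freely over $\vect^{S^1}$ from the $M^i$, in the spirit of Lemma \ref{basic}: namely take $M^\bullet \otimes_k (k[x,\epsilon])$ with the induced differentials $d_M + \epsilon/x$ and $h_M + \epsilon$, using that $(k[x,\epsilon],\epsilon/x,\epsilon)$ is a model for $k$ in $\vect^{S^1}$. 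Concretely this is $(V^\bullet\cdot M^\bullet, \ldots)$ with $V^\bullet = k[x,\epsilon]$, so it is $V^\bullet\cdot M^\bullet$ in the module-category notation above. One must check this is a legitimate resolution: it maps quasi-isomorphically to $M^\bullet$ in $\Mc^{S^1}$ (this is where $\epsilon/x$ being a contracting homotopy for $k[x,\epsilon]\to k$ is used), and each term is a finite direct sum of shifts of the projective $M^i$'s, hence computes $\mathrm{RHom}_{\Mc^{S^1}}(M^\bullet,-)$.

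Granting that, I would write
\[
\mathrm{RHom}_{\Mc^{S^1}}(M^\bullet, N^\bullet)\simeq \mathrm{Hom}_{\Mc^{S^1}}\bigl((k[x,\epsilon])\cdot M^\bullet, N^\bullet\bigr)
\]
and then apply the internal-Hom adjunction \eqref{inthom} with $V^\bullet = k[x,\epsilon]$ to rewrite the right-hand side as
\[
\mathrm{Hom}_{\vect^{S^1}}\bigl(k[x,\epsilon], \underline{\mathrm{Hom}}(M^\bullet, N^\bullet)\bigr),
\]
which by Lemma \ref{basic} (and its following remark, applied with the replacement of $k$ by $(k[x,\epsilon],\epsilon/x,\epsilon)$) is exactly $\mathrm{RHom}_{\vect^{S^1}}(k,\underline{\mathrm{Hom}}(M^\bullet,N^\bullet))$. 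The bookkeeping here is just matching the two descriptions of the $k[x,\epsilon]$-action and checking the differentials $d - hy$ agree; this is the routine part.

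The main obstacle I expect is the first step: verifying carefully that $(k[x,\epsilon])\cdot M^\bullet$ genuinely computes $\mathrm{RHom}_{\Mc^{S^1}}(M^\bullet,-)$, i.e.\ that it is a projective (equivalently, $\mathrm{Hom}(-,N^\bullet)$-acyclic) replacement of $M^\bullet$ in the appropriate homotopy category of $\Mc^{S^1}$. This requires (a) the quasi-isomorphism $(k[x,\epsilon])\cdot M^\bullet \to M^\bullet$, for which one uses that $M^{i>0}=0$ so no completion/convergence issues arise and the total complex behaves well, and (b) that $\mathrm{Hom}_{\Mc^{S^1}}((k[x,\epsilon])\cdot M^\bullet,-)$ already has the right derived-functor behaviour because each $M^i$ is projective in $\Mc$ and $k[x,\epsilon]$ is built from copies of $k$ — so termwise the $\mathrm{Hom}$'s are exact in $N^\bullet$. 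Once projectivity/acyclicity is pinned down, the rest is a formal consequence of \eqref{inthom} and Lemma \ref{basic}.

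Alternatively, if one prefers to avoid homotopy-category technology, the same conclusion can be reached by a direct spectral-sequence or explicit double-complex argument: filter $\underline{\mathrm{Hom}}(M^\bullet,N^\bullet)[[y]]$ by powers of $y$, use that $M^i$ projective makes $\mathrm{Hom}_\Mc(M^i,-)$ exact, and identify the outcome with the two-sided bar-type resolution computing $\mathrm{RHom}_{\Mc^{S^1}}$. I would present the adjunction proof as the main line and only fall back on this if the cofibrancy claim is awkward to state in the paper's minimal framework.
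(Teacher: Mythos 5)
Your proposal follows essentially the same route as the paper: the cofibrant replacement is exactly $(k[x,\epsilon],\epsilon/x,\epsilon)\cdot M^\bullet$ with $\deg(x)=-2$, $\deg(\epsilon)=-1$, followed by the adjunction \eqref{inthom} and Lemma \ref{basic}. The verification you flag as the main obstacle is handled in the paper by transferring the projective model structure from $K^-(\Mc)$ along the adjunction $(F,U)$ with $F=Cone(1-\varsigma)$ and observing that $FU(M^\bullet)$ is isomorphic to $(M^\bullet[\epsilon],d,\epsilon+h)$, which makes the latter cofibrant and hence $(M^\bullet[x,\epsilon],d+\epsilon/x,\epsilon+h)$ a cofibrant replacement of $(M^\bullet,d,h)$.
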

\begin{proof}
Let $\Mc^{-S^1}$ denote the subcategory of $\Mc^{S^1}$ with complexes concentrated in non-positive degrees.  Similarly, $K^-(\Mc)$ denotes cohomological complexes in $\Mc$ in non-positive degrees.  Equip the latter with the projective model structure.  We recommend \cite{model} for an overview.

Consider $U:\Mc^{-S^1}\to K^-(\Mc)$ the forgetful functor (forgetting the homotopy $h$) and let $F:K^-(\Mc)\to \Mc^{-S^1}$ be $Cone(1-\varsigma)$, i.e., for $(A^\bullet, d)\in K^-(\Mc)$ we have $F(A^\bullet, d)=(A^\bullet[\epsilon],d+(1-\varsigma)\iota_\epsilon,\epsilon)$ where $deg(\epsilon)=-1$ and $\iota_\epsilon$ denotes contraction with $\epsilon$.  We note that $(F,U)$ is an adjoint pair of functors and the model structure on $\Mc^{-S^1}$ is transferred from that of $K^-(\Mc)$.

Observe that $FU(M^\bullet)=(M^\bullet[\epsilon],d+(1-\varsigma)\iota_\epsilon,\epsilon)$ is isomorphic to $(M^\bullet[\epsilon],d,\epsilon+h)$ via \begin{equation}\label{isoeqn}m_1+\epsilon m_2\mapsto m_1+hm_2+\epsilon m_2.\end{equation}  Furthermore we have a commutative diagram:$$
\xymatrix{FU(M^\bullet)\ar[d]^{\eqref{isoeqn}}\ar[r]^{\quad eval}& M^\bullet\ar[d]^{Id}\\
M^\bullet[\epsilon]\ar[r] & M^\bullet}
$$ where the top arrow is the evaluation map: $m_1+\epsilon m_2\mapsto m_1+hm_2$ and the bottom arrow is $m_1+\epsilon m_2\mapsto m_1$.  So $(M^\bullet[\epsilon],d,\epsilon+h)$ is cofibrant and thus $(M^\bullet[x,\epsilon],d+\epsilon/x,\epsilon+h)$ where $deg(x)=-2$ is a cofibrant replacement of $(M^\bullet,d,h)$.

It then follows that \begin{align*}RHom_{\Mc^{S^1}}(M^\bullet,N^\bullet)&\simeq Hom_{\Mc^{S^1}}((M^\bullet[x,\epsilon],d+\epsilon/x,\epsilon+h),N^\bullet)\\
&=Hom_{\Mc^{S^1}}((k[x,\epsilon],\epsilon/x,\epsilon)\cdot M^\bullet, N^\bullet)\\
&\simeq Hom_{\vect^{S^1}}((k[x,\epsilon],\epsilon/x,\epsilon),\underline{Hom}(M^\bullet,N^\bullet))\\
&\simeq RHom_{\vect^{S^1}}(k,\underline{Hom}(M^\bullet,N^\bullet))
\end{align*}

\end{proof}

We obtain the following as a corollary of the above.

\begin{theorem}\label{th:oldnew}
If $A$ is an algebra in $\hmod$ that is projective as an object and $M$ is a stable anti-Yetter-Drinfeld contramodule, then we can reinterpret the \emph{old} Hopf-cyclic cohomology: $$HC^{old}(A,M)\simeq RHom_{\aydc^{S^1}}(ch(A),M).$$
\end{theorem}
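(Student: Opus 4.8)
The plan is to combine Proposition~\ref{mp} with Lemma~\ref{basic} to reduce the right-hand side to an explicit complex, and then match that complex with the known Tsygan-style double complex computing the old Hopf-cyclic cohomology. First I would set $\Mc = HH(\hmod) = \aydc$, which carries the central element $\varsigma$ described in Section~\ref{sec:cyclicforalgebras}, so that $\Mc^{S^1} = \aydc^{S^1}$ is exactly the category of mixed anti-Yetter-Drinfeld contramodules. The object $ch(A) = (Tr(A^{\ot-\bullet+1}),b,B)$ lives in non-positive cohomological degrees (it is supported in degrees $\le 0$ after the standard reindexing $Tr(A^{\ot n+1})$ in degree $-n$), and since $A$ is projective as an object of $\hmod$, each $A^{\ot n+1}$ is projective, hence each $Tr(A^{\ot n+1}) = \Ac(A^{\ot n+1})$ is a free, in particular projective, object of $\aydc$ by Proposition~\ref{prop:adj}. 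Thus $ch(A)$ satisfies the hypotheses of Proposition~\ref{mp} (with $M^\bullet = ch(A)$), which gives
\begin{equation*}
RHom_{\aydc^{S^1}}(ch(A), M) \simeq RHom_{\vect^{S^1}}(k, \underline{Hom}(ch(A), M)).
\end{equation*}

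Next I would unwind $\underline{Hom}(ch(A),M) \in \vect^{S^1}$: by the lemma preceding \eqref{inthom} this is the complex $Hom_{\aydc}(Tr(A^{\ot-\bullet+1}), M)$ with its two differentials $[b,-]$ and $[B,-]$. Using the adjunction $(Tr, U)$ of Proposition~\ref{prop:adj} (equivalently, that $Tr(A^{\ot n+1}) = \Ac(A^{\ot n+1})$ is free on $A^{\ot n+1} \in \hmod$), I would identify $Hom_{\aydc}(Tr(A^{\ot n+1}), M)$ with $Hom_{\hmod}(A^{\ot n+1}, UM)$, i.e. with the space of $H$-linear maps $A^{\ot n+1} \to M$; tracing through the face and degeneracy maps of $Tr(A^{\ot\bullet+1})$ and the contramodule structure of $M$, these $Hom$-spaces with the induced $b$ should recover precisely the Hopf-cyclic cochain complex of $A$ with coefficients in the stable AYD contramodule $M$ as in \cite{contra}, and the induced $B$ should be Connes' operator. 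Then applying Lemma~\ref{basic} with $X^\bullet = \underline{Hom}(ch(A),M)$ yields $RHom_{\vect^{S^1}}(k, X^\bullet) \simeq X^\bullet[[y]]$ with differential $d - hy$, $\deg y = +2$ — this is exactly the $(b,B)$-bicomplex form of cyclic cohomology, which by \cite{loday} computes $HC^{old}(A,M)$. Stability of $M$ is what guarantees $bB + Bb = 1 - \varsigma$ acts as $1 - \mathrm{id} = 0$ on the nose on the relevant pieces, matching the classical normalization.

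I would then assemble these identifications into the stated quasi-isomorphism, being careful about the reindexing conventions (cohomological vs. homological grading, and the shift by $1$ in $A^{\ot\bullet+1}$) so that the degree-$y$ parameter lines up with the $S$-operator periodicity and the total complex is the standard one computing $HC^{old}$.

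The main obstacle I expect is the identification in the second paragraph: verifying that the complex $(Hom_{\hmod}(A^{\ot\bullet+1}, UM), [b,-], [B,-])$ obtained abstractly via the monad $\Ac$ really is, on the nose (or at least up to an explicit isomorphism), the classical Hopf-cyclic $(b,B)$-complex with its cup-product-type coefficient action from \cite{contra}. This requires pinning down the paracyclic structure maps on $Tr(A^{\ot\bullet+1})$ from Remark~\ref{rem:trace} and Remark~\ref{rem:cyctr} concretely — in particular the cyclic operator $\tau$ built from $\tau_{V,W}\varphi(h) = 1\ot h^2 \circ \sigma_{V,W} \circ \varphi(h^1)$ — and checking that under the adjunction $Hom_{\aydc}(Tr(-),M) \cong Hom_{\hmod}(-, UM)$ these go to the antipode-twisted face maps and the $H$-coaction-twisted cyclic operator appearing in the usual definition. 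This is essentially a (lengthy but mechanical) Sweedler-notation computation, and it is the only place where the specific combinatorics of Hopf-cyclic cohomology, as opposed to the formal machinery developed in the earlier sections, actually enters.
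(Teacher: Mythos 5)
Your proposal follows essentially the same route as the paper's proof: projectivity of each $Tr(A^{\ot n+1})$ via the $(Tr,U)$ adjunction of Proposition \ref{prop:adj}, reduction of the $RHom$ by Proposition \ref{mp}, identification of $\underline{Hom}(ch(A),M)$ with the classical cocyclic complex $Hom_H(A^{\ot\bullet+1},M)$ via the same adjunction, and finally Lemma \ref{basic} to land on the $(b,B)$-complex $Hom_H(A^{\ot\bullet+1},M)[[y]]$ with $D=b-By$. The one point you assert without justification is that $A^{\ot n}$ is projective whenever $A$ is; the paper derives this from the exactness of internal Homs in $\hmod$, i.e.\ $Hom_H(A^{\ot n},-)\simeq Hom_H(A,Hom^l(A^{\ot n-1},-))$.
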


\begin{proof}
Since $A$ is projective then so is $A^{\ot n}$ as the monoidal category $\hmod$ has exact internal Homs (see \cite{ks} for example), i.e., $Hom_H(A^{\ot n},-)\simeq Hom_H(A,Hom^l(A^{\ot n-1},-))$.   Note that $Tr(A^{\ot n})$ is also projective since $(Tr, U)$ is an adjoint pair by Proposition \ref{prop:adj}.  By Proposition \ref{mp} we have $RHom_{\aydc^{S^1}}(ch(A),M)\simeq RHom_{\vect^{S^1}}(k, Hom_{\aydc}(ch(A), M))$ and as mixed complexes $Hom_{\aydc}(ch(A), M)\simeq Hom_H(A^{\ot -\bullet+1}, M)$ (the identification is via Proposition \ref{prop:adj} again) where the latter obtains the mixed complex structure from the cocyclic object structure on $Hom_H(A^{\ot n+1}, M)$ defined as usual \cite{contra}.  Since $HC^{old}(A,M)$ is defined as the cyclic cohomology of this cocyclic object, which is isomorphic to the cyclic cohomology \cite{kassel} of the associated mixed complex, which in turn is exactly the cohomology of $Hom_H(A^{\ot \bullet+1}, M)[[y]]$ with $D=b-By$, we are done by Lemma \ref{basic}.
\end{proof}

And so we are prompted to make the following definition:

\begin{definition}\label{defcoh}
Let $A\in Alg(\hmod)$ and $M^\bullet\in\aydc^{S^1}$ then $$HC^\bullet(A,M^\bullet):=RHom_{\aydc^{S^1}}(ch(A),M^\bullet).$$ This generalizes the old definition to algebras that are not projective as objects and to general \emph{mixed}  anti-Yetter-Drinfeld contramodule coefficients.
\end{definition}

\begin{remark}\label{ch1comp}
Let $(M^\bullet,d,h)\in\aydc^{S^1}$ then note that $((M^\bullet)^H,d,h)\in\vect^{S^1}$.  From the proofs of Theorem \ref{th:oldnew}, Proposition \ref{mp}, and the observation that $ch(1)\simeq Tr(1)$ we immediately obtain that if $1$ is projective in $\hmod$ (and thus so is everything) then $$RHom_{\aydc^{S^1}}(ch(1), M^\bullet)\simeq RHom_{\vect^{S^1}}(k,(M^\bullet)^H)$$ so that it is computed by $(M^\bullet)^H)[[y]]$ with $deg(y)=+2$ and the differential $d-hy$.
\end{remark}

More generally, taking $RHom_{HH(\Cc)^{S^1}}(ch(A),M)$ as the definition of cyclic cohomology of an algebra $A$ in a monoidal category $\Cc$ with coefficients in $M\in HH(\Cc)^{S^1}$, i.e., of $HC^\bullet(A,M)$, we get:  

\begin{remark}\label{re:extmix}
The adjunction for cyclic cohomology discussed in \cite{s} can be obtained from \eqref{eq:comp} since for $A\in Alg(\Cc)$ and $M\in HH(\Dc)^{S^1}$ we have: \begin{align*}HC^\bullet(\Mc^*A,M)&=RHom_{HH(\Dc)^{S^1}}(ch(\Mc^*A),M)\\
&\simeq RHom_{HH(\Dc)^{S^1}}(HH(\Mc)^*ch(A),M)\\
&\simeq RHom_{HH(\Dc)^{S^1}}(ch(A), HH(\Mc)_* M)=HC^\bullet(A, HH(\Mc)_* M).
\end{align*}  In particular, for $N\in HH(\Dc)^{S^1}$ we have $HC^\bullet(A,N)\simeq RHom_{mixed}(k, HH(A_\Cc\mmm)_*N)$ so that the generalized cyclic cohomology reduces to computing $Ext^n_{mixed}(k, T)$ for some mixed complex $T$.
\end{remark}

\subsection{A generalization of the cyclic double complex}\label{tsygan}
In this section we take a more hands on approach to defining cyclic cohomology with coefficients in a \emph{mixed} anti-Yetter-Drinfeld contramodule by generalizing the original Tsygan's construction of a double complex associated to a precocyclic object.  We make no claims about the compatibility of this method with the derived approach taken above in Definition \ref{defcoh}, though no doubt with some mild assumptions they agree.

Let $A$ be an associative, not necessarily unital, algebra in $\hmod$ and take $(M^\bullet,d,h)\in \aydc^{S^1}$ that is bounded as a complex. Consider $Hom_H(A^{\ot\bullet+1},M^j)$ for a fixed $j$, and note that we don't have degeneracies given by $1\in A$ anymore and $\tau^{n+1}_n$ is no longer $Id$ as $M^j$ is not assumed to be stable, however $\tau^{n+1}_n$ is induced by $\varsigma_{M^j}\in Aut(M^j)$.  Thus it is a preparacocyclic vector space.

Define a trigraded object $Hom_H(A^{\ot\bullet+1},M^\bullet)[x,\epsilon]$ with $deg(x)=+2$, $deg(\epsilon)=+1$ adjoined graded commutative variables. To be backward compatible, the exponent of $A$ gives the first index, total degree of $x,\epsilon$ provides the second index, and exponent of $M$ the third.  Thus the total degree of $Hom_H(A^{\ot i+1},M^j)x^k\epsilon^l$ is $i+j+2k+l$.

With the notation as in \cite{loday} we have operators:\begin{align*}\delta_1&=Nx\iota_\epsilon+(1-t)\epsilon\quad\text{with}\quad\delta_1^2=(1-\varsigma)x\\
\delta_2&=b\iota_\epsilon-b'\epsilon\iota_\epsilon\\
\delta_3&=d\\
\delta_4&=xh\quad\text{and}\quad [\delta_3,\delta_4]=\delta^2_1.\\
\end{align*}

Each operator is of total degree $1$ and except for the relations indicated above, everything else graded-commutes; so in particular squares to $0$.  We can thus take the total complex with the differential $$D=\delta_1+\delta_2+\delta_3-\delta_4$$ yielding cohomology groups as $D^2=0$, i.e., $\delta_4$ fixes the $\delta^2_1\neq 0$ defect caused by the instability of $M^j$s.

\begin{remark}
It is immediate that if $(M^\bullet,d,h)=M$, a single stable anti-Yetter-Drinfeld contramodule, then the above reduces to the Tsygan's double complex associated to a precocyclic object $Hom_H(A^{\ot\bullet+1},M)$, i.e., it calculates $HC^{old}(A,M)$.
\end{remark}

\section{The monoidal category of $H$-comodules}\label{sec:comodules}
We mentioned in Remark \ref{r:aydmod} that one may obtain anti-Yetter-Drinfeld modules, $\aydm$, by considering $\Delta_!$ instead of $\Delta_*$ in the case of $\hmod$.  However there is a more natural setting in which the module version of coefficients appears, namely that of the case: $\Cc=\Mc^H$, the monoidal category of $H$-comodules.  More precisely, the general methods in Section \ref{hochschildhomology} can be applied here as they were to the $\hmod$ case.  One may consider $H^{\bullet+1}$ as a cyclic object in coalgebras, and thus $(\Mc^H)^{\bullet+1}$ is a cyclic object in categories (with right adjoints).    For example, the multiplication $m: H^2\to H$ in $H$ induces an adjoint pair of functors: $m_!:\Mc^{H^2}\leftrightarrows\Mc^H:m^!$, where $m_!$ is the monoidal multiplication in $\Mc^H$ and $m^!$ its right adjoint.  Again, by considering $(\Mc^H)^{\bullet+1}$ as a cocyclic object in categories (with left adjoints) we can examine $HH(\Mc^H)$ as an inverse limit, and so as a module category over a  monad on $\Mc^H$. To be consistent with Section \ref{hochschildhomology} we let $\Delta^*=m_!$ and $\Delta_*=m^!$.  Unravelling the general definitions, we get: 

\begin{definition} The adjoint pair of functors $(\Delta^*,\Delta_*)$ is given by $$\Delta^*:\Mc^{H^2}\leftrightarrows\Mc^H:\Delta_*$$
\begin{itemize}
\item for $S\in\Mc^{H^2}$, given by $S\to S\ot H^2$ with $s\mapsto s_0\ot s_1\ot s_2$, we have $\Delta^* S=S$ with $S\to S\ot H$, given by $s\mapsto s_0\ot s_1 s_2$,

\item for $T\in\Mc^H$, given by $T\to T\ot H$ with $t\to t_0\ot t_1$, we have $\Delta_* T=H\ot T$ with $H\ot T\to H\ot T\ot H^2$, given by $x\ot t\mapsto x^2\ot t_0\ot t_1 S(x^1)\ot x^3$.
\end{itemize}  

\end{definition}

Thus for $T\in\Mc^H$: $$\Ac(T)=\Delta^*\sigma\Delta_* T=H\ot T$$ with the coaction $H\ot T\to H\ot T\ot H$:\begin{equation}\label{e:aydcond}x\ot t\mapsto x^2\ot t_0\ot x^3t_1S(x^1).\end{equation}  Furthermore, the map $\Ac^2(T)\to\Ac(T)$ is $m\ot Id_T: H^2\ot T\to H\ot T$ where $m$ is the multiplication in $H$. The identity of $\Ac$ is $Id: t\mapsto 1\ot t$ while the central element is $\varsigma: t\mapsto t_1\ot t_0$.  If $T$ is an $\Ac$-module then the action map $H\ot T\to T$ defines an action of $H$ on $T$.

So $T$ is a module over $\Delta^*\sigma\Delta_*$ if and only if $T\in\aydm$, i.e., $T$ is both an $H$-comodule and an $H$-module with the two structures compatible as dictated by the above (definitions were given by explicit formulas in \cite{JS} and \cite{HKRS2} independently).  Furthermore, as expected we have the free module functor $$Tr:\Mc^H\to\aydm$$ sending $T\in\Mc^H$ to $H\ot T$ with the comodule structure as in \eqref{e:aydcond} and the obvious $H$-action on the left on the first factor.  Naturally the functor $U$ the other way, that forgets the $H$-action, is right adjoint to $Tr$.

For any $M\in\aydm$ the $S^1$ action is given by $$\varsigma_M:m\mapsto m_1m_0$$ so that $\varsigma_{Tr(T)}(x\ot t)=xt_1\ot t_0$.

\begin{remark}
We have an analogue of Proposition \ref{prop:ayd}: $$HH(\Mc^H)=\aydm, HC(\Mc^H)=s\aydm,\quad and\quad HH(\Mc^H,\vect)=\hmod.$$   Note that the latter is very different from the $\hmod$ case, due to the fact that while $\vect$ is admissible as a right $\Mc^H$-module, it is not (in general) as a right $\hmod$-module. 
\end{remark}

\begin{definition}
We will call the objects of $\aydm^{S^1}$ the mixed anti-Yetter-Drinfeld \emph{modules}.
\end{definition}

And so forth, everything can be repeated here almost verbatim to define $ch(A)$, a mixed anti-Yetter-Drinfeld module associated to a unital associative algebra $A$ in $\Mc^H$.  The discussion of Section \ref{subsec:nvso} can be repeated here as well.

\section{Hochschild homology and adjunctions  in the Hopf setting}\label{sec:appl}
Let us examine the Hochschild homology adjunctions of Section \ref{ss:hhadj} in some examples in the setting where the monoidal categories arise as modules or comodules over Hopf algebras.

In particular, let us apply the constructions of Section \ref{sss:monfun} to the case of $\rho:K\to H$ a map of Hopf algebras which results in the obvious monoidal functor $\rho^*:\hmod\to\kmod$ with a right adjoint being coinduction, i.e., $\rho_* M=Hom_K(H,M)$.

We see that if $M\in{_K\aydc}$ with $K$-contraaction given by $\alpha: Hom(K,M)\to M$ then $HH(\rho)_* M=\rho_* M=Hom_K(H,M)$ with the $H$-contraaction obtained as the composition of two maps: $$Hom(H,Hom_K(H,M))\to Hom_K(H, Hom(K,M))\quad\text{with}\quad f(x\ot y)\mapsto g_f(h\ot k)$$ where $g_f(h\ot k)=f(S(h^3)\rho(k^1)h^1\ot\rho(k^2)h^2)$ and $$\alpha\circ-:Hom_K(H, Hom(K,M))\to Hom_K(H,M).$$

If $N\in{_H\aydc}$ with $H$-contraaction again denoted by $\alpha$ then $HH(\rho)^*N$ is given by the coequalizer of the diagram of free objects in ${_K\aydc}$: $$\xymatrix{Hom(K, Hom(H,N))\ar[rr]^{\alpha\circ-}\ar[d]_{(-\circ\rho)\circ-} & & Hom(K,N)\\
Hom(K, Hom(K,N))\ar[rr]_{\simeq} & & Hom(K\ot K,N)\ar[u]_{-\circ\Delta_K}
}$$ where $\Delta_K(k)=k^1\ot k^2$.

Let us return to the $\epsilon^*:\vect\to\hmod$ example and see the adjunctions between mixed complexes and $\aydc^{S^1}$ explicitly.  For $(W^\bullet, d, h)$ a mixed complex: $$HH(\epsilon)^*W^\bullet=(Hom(H,W^\bullet),d\circ -,h\circ -)$$ with action $x\cdot f=f(S(x^2)-x^1)$ and contraaction $-\circ\Delta_H:Hom(H,Hom(H, W^\bullet))\to Hom(H,W^\bullet)$.  Note that the individual $Hom(H, W^i)$ are stable.  Conversely, if $(M^\bullet,d,h)\in \aydc^{S^1}$ then \begin{equation}\label{eq:invar}HH(\epsilon)_*M^\bullet=(M^\bullet)^H \end{equation} is a mixed complex. 

\begin{remark}
The same $\rho$ induces $\rho^*:\Mc^K\to\Mc^H$ with a right adjoint.  Then we obtain an adjunction between $_K\aydm$ and $_H\aydm$  that already appeared in \cite{JS}, i.e.,  $HH(\rho)^*=Ind_K^H$ and $HH(\rho)_*=Res^H_K$.
\end{remark}

Furthermore, an example of the constructions in Section \ref{sss:bimodcat} is obtained by considering the identification of $\hmod$ with $\bimod_{\Mc^H}H$ via the fundamental theorem of Hopf modules.  Then we get the adjoint pair $((-)',\widehat{(-)})$ of functors: \begin{equation}\label{eq:theadj11}(-)':\aydc\leftrightarrows\aydm:\widehat{(-)}\end{equation} from \cite{s2}.  More precisely, we have $\vect$,  an admissible $(\hmod,\Mc^H)$-bimodule. Let $A$ be an $H$-module algebra and $M$ a mixed anti-Yetter-Drinfeld module.  We obtain a generalization of a result in \cite{s2} relating cyclic homologies between $H$-module algebras and $H$-comodule algebras. Namely, by Remark \ref{re:extmix} we have: $$HC^{\bullet,H}(A\rtimes H, M)=HC^{\bullet,H}(\vect^* A, M)\simeq HC_H^\bullet(A,HH(\vect)_*M)= HC_H^\bullet(A,\widehat{M}).$$

\section{Appendix}\label{sec:app}
We collect here some material that is not absolutely necessary for the paper, yet without which it would, we feel, not be complete.

\subsection{The case of $QC(X)$}\label{ss:qcx}

The discussion contained in this section is non-rigorous and is provided for perspective only. Let us briefly illustrate some of the considerations of this paper with the case of $\Cc=QC(X)$, quasicoherent sheaves on $X$, done properly in \cite{it, loop1, loop2}.  For simplicity, assume that $X$ is an affine scheme.  The diagonals and projections endow $X^{\bullet+1}$ with the structure of a cocyclic object, as usual.    Let $\Delta:X\to X\times X$ be the diagonal which induces the adjoint pair $(\Delta^*,\Delta_*)$ between $QC(X^2)$ and $QC(X)$.  Similarly, $\Cc^{\bt\bullet+1}=QC(X^{\bullet+1})$ is a cocyclic object in categories with left adjoints.

For $T\in QC(X)$ we have $\Delta^*\sigma\Delta_* (T)\simeq\Omega^{-\bullet}_X\ot_{\Oc_X}T$.  The multiplication is given by that of the graded commutative algebra $\Omega^{-\bullet}_X$, i.e., functions on $LX$, the loop space of $X$, also known as $TX[-1]$, the odd tangent space to $X$.  To describe the unit $1$ and central element $\varsigma$ we need the following discussion.

Let $C^{-\bullet}(\Oc_X, T)$ denote the Hochschild homology complex (in negative degrees) of $\Oc_X$  with coefficients in $T$ viewed as a diagonal bimodule.  It is classical that $\Omega^{-\bullet}_X\ot_{\Oc_X}T$ and $C^{-\bullet}(\Oc_X, T)$ are equivalent as complexes in $QC(X)$, where the action of $\Oc_X$ on $C^{-\bullet}(\Oc_X, T)$ is via the left action on $T$.  Consider $C^{-\bullet}(\Oc_X, \underline{T}\boxtimes \Oc_X)$ and $C^{-\bullet}(\Oc_X, \underline{\Oc_X}\boxtimes T)$ with the obvious non-diagonal bimodule structures on the coefficients, but take care to note that the action of $\Oc_X$ on the complexes is via the underlined components of the coefficients.   Both $\underline{T}\boxtimes \Oc_X$ and $\underline{\Oc_X}\boxtimes T$ map to $T$ via the $\Oc_X$ action on $T$ and these maps induce morphisms of the corresponding complexes.  Thus $$T\stackrel{q-iso}{\longleftarrow}C^{-\bullet}(\Oc_X, \underline{T}\boxtimes \Oc_X)\to C^{-\bullet}(\Oc_X, T)$$ is the identity of $\Delta^*\sigma\Delta_*$ while $$T\stackrel{q-iso}{\longleftarrow}C^{-\bullet}(\Oc_X, \underline{\Oc_X}\boxtimes T)\to C^{-\bullet}(\Oc_X, T)$$ is the $\varsigma$ of $\Delta^*\sigma\Delta_*$.

Note that the identity map is easy to describe more explicitly as $\Oc_X=\Omega^0_X\to\Omega_X^{-\bullet}$, while $\varsigma$ is closely related to $J_1\Oc_X\in Hom_{\Oc_X}(\Oc_X,\Omega^1_X[1])$, the first order jets of $\Oc_X$.  As is explained in \cite{loop1, loop2} we have that $HH(QC(X))$ consists of modules over the graded commutative algebra $\Omega_X^{-\bullet}$, i.e., is equivalent to $QC(LX)$ while $HC(QC(X))$ is formed by the modules over the differential graded commutative algebra $(\Omega_X^{-\bullet},d_{DeRham})$ and is closely related to $D_X$-modules.  In particular, the trivialization of the $S^1$-action on a particular object pulled back from $QC(X)$ to the Hochschild homology category is related to the flat structure on it, i.e., upgrading it to a $D_X$-module.  More precisely, a $D_X$-module $M$, i.e., an $\Oc_X$-module with a compatible action of vector fields, is viewed as $\Omega_X^{-\bullet}\ot_{\Oc_X}M$ with the obvious $\Omega_X^{-\bullet}$-module structure and homotopy given by $d_M$.

Given a map $f:X\to Y$, we have the monoidal functor $f^*:\Oc_Y\mmm\to\Oc_X\mmm$ with the right adjoint $f_*$.  The adjunction of Hochschild homology categories then comes from the pullback of forms: $f^*:\Omega_Y\to\Omega_X$ via induction and restriction.  Also note that the pair $(f^*,f_*)$ gives us the correct, in the sense of Section \ref{ss:lower}, adjoints on algebras. More precisely, we get that the functors $f^*:\Oc_Y\text{-alg}\leftrightarrows\Oc_X\text{-alg}:f_*$ induce an equivalence $$Fun (A\ot_{\Oc_Y}\Oc_X\mmm, B\mmm)_{\Oc_X\mmm}\simeq Fun(A\mmm, B\mmm)_{\Oc_Y\mmm},$$ since as a right $\Oc_Y\mmm$ module category $B_{\Oc_X\mmm}\mmm=B_{\Oc_Y\mmm}\mmm=B\mmm$, as $B\ot_{\Oc_Y}M\simeq B\ot_{\Oc_X}\Oc_X\ot_{\Oc_Y}M$.

We observe that the case when we replace $QC(X)$ by modules over a Hopf algebra $H$, though the monoidal category $\hmod$ is no longer symmetric nor braided, is much less technologically sophisticated since $\Delta^*$ is exact there.  All of the structures involved are very explicitly describable with none of the subtlety of morphisms in a derived category obscuring the picture.

\subsection{Contratraces}\label{contratraces}

In this section we relate the present context to the one considered in \cite{ks} where biclosed monoidal categories (i.e., possessing both left and right internal Homs) were the starting point for the development of cyclic cohomology with coefficients.    Recall that internal Homs are right adjoints to the monoidal products, i.e., $$Hom_\Cc(X\ot Y, Z)=Hom_\Cc(X,\underline{Hom}^l(Y,Z))=Hom_\Cc(Y,\underline{Hom}^r(X,Z)).$$

A minor generalization of what is considered in \cite{ks} is the category $\Zc_\Cc(\Mc^{op})$.  More precisely, given a $\Cc$-bimodule category $\Mc$ we assume that there exist ``internal Homs" for the actions, i.e., $Hom_\Mc(M\cdot C,N)=Hom_\Mc(M,C\la N)$ and similarly for the left action.  Then  $\Zc_\Cc(\Mc^{op})$ is  the center, in the usual sense, of the $\Cc$-bimodule category $\Mc^{op}$ where the action is given by $\la$ and $\ra$.  Recall that it consists of objects $M$ together with natural isomorphisms for every $C\in\Cc$: \begin{equation}\label{tauadj}\tau_C:C\la M\to M\ra C,\end{equation}  subject to the unit condition: $$\tau_1=Id_M,$$ and the associativity condition (for $C,D\in\Cc$): $$(\tau_C\ra Id)\circ(Id\la \tau_D)=\tau_{C\ot D}.$$  The maps in $\Zc_\Cc(\Mc^{op})$ are just morphisms in $\Mc$ that are compatible with the $\tau_C$'s.

It is perhaps easier to understand the conditions above when equivalently reformulated in terms of the contravariant functor from $\Mc$ to vector spaces (that $M$ represents).  More precisely, consider the $\Cc$-bimodule category of contravariant functors from $\Mc$ to $\vect$, with the usual action: $$C\cdot G(-)=G(-\cdot C)\quad and\quad G(-)\cdot C=G(C\cdot -).$$ Note that due to the ``biclosed" property of the actions on $\Mc$, the representable functors form a sub-bimodule category, and it is $\Mc^{op}$. If we set $F(-)=Hom_\Mc(-,M)$ then we get isomorphisms $\tau_C:C\cdot F\to F\cdot C$ such that $\tau_1=Id$, but more importantly, to pass $C\ot D$ through $F$ via $\tau$ we may pass $D$ first, followed by $C$.

Observe that if the $\Cc$-bimodule $\Mc$ is such that both actions have right adjoints (in the sense of Section \ref{hochschildhomology}) then we can do the following.  Let $\sigma:\Cc\bt\Mc\to\Mc\bt\Cc$ be the flip. In our case we have $\Cc_\bullet=\Mc\boxtimes\Cc^{\boxtimes\bullet}$ fibered over the simplex category $\Delta$ so that the inverse limit consists of $M\in\Mc$ together with an isomorphism : \begin{equation}\label{thetau}\tau:\Delta_{r*}M\to\sigma\Delta_{l*}M\end{equation} in $\Mc\boxtimes\Cc$ subject to a unit and an associativity condition that proves the Lemma \ref{contracoh} below. 

For $M,N\in\Mc$ and $C\in \Cc$ we get $$Hom_\Mc(N\cdot C,M)\simeq Hom_{\Mc\boxtimes\Cc}(N\boxtimes C,\Delta_{r*}M)\simeq Hom_\Mc(N,C\lba\Delta_{r*}M)$$ and so we see that \begin{equation}\label{adjointaction}C\la M= C\lba\Delta_{r*}M\end{equation} demonstrates the existence of the left adjoint action of $\Cc$ on $\Mc^{op}$.  The right action on $\Mc^{op}$ is obtained from $\Delta_{l*}$ in an identical manner.  Thus for such $\Mc$ as we consider in this paper we have that the $\Cc$-actions on them are also ``biclosed".

The definition of $C\lba T \in \Mc$ for $C\in\Cc$ and $T\in\Mc\boxtimes\Cc$ needs a few words of explanation. In the setting of Section \ref{specialcase} that is of primary interest to us it is completely straightforward.  Namely, for $N\in\amod$, $C\in\hmod$, and $T\in{{}_{A\ot H}\Mc}$ we have $$Hom_{A\ot H}(N\ot C, T)=Hom_A(N, Hom_H(C,T))$$ where $C\lba T=Hom_H(C,T)$ has an obvious $A$-action obtained from that on $T$.  Note that $H\lba T=T$ as both $A$-modules and $H$-modules, where the action of $H$ on the LHS is obtained from the action of $H$ on itself on the right.  This allows for the recovery of $\Delta_{r*}M\to\sigma\Delta_{l*}M$ of \eqref{thetau} from the collection of $C\la M\to M\ra C$.

\begin{lemma}\label{contracoh}
Under the assumption that $\Cc$ and $\Mc$ are as in Section \ref{specialcase} we have that $$HH(\Cc,\Mc)=\Zc_\Cc(\Mc^{op}).$$
\end{lemma}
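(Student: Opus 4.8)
The plan is to match both categories against the explicit form of the inverse limit $HH(\Cc,\Mc)=\projlimD\,\Mc\bt\Cc^{\bt\bullet}$ obtained in Section \ref{hochschildhomology}. By Lemma \ref{hasbasechange} the cosimplicial object $\Mc\bt\Cc^{\bt\bullet}$ has \propA\!\!, so (as recalled in the proof of Theorem \ref{theorem:monadic}) an object of $HH(\Cc,\Mc)$ is a pair $(M,\tau)$ with $M\in\Mc$ and $\tau:\Delta_{r*}M\to\sigma\Delta_{l*}M$ an isomorphism in $\Mc\bt\Cc$ — equivalently $\tau:\delta_{0*}M\to\delta_{1*}M$ as in \eqref{thetau} — subject to the unitality $s_{0*}(\tau)=Id_M$ and the associativity $\delta_{2*}(\tau)\delta_{0*}(\tau)=\delta_{1*}(\tau)$ in $\Mc\bt\Cc^{\bt 2}$; a morphism $(M,\tau_M)\to(N,\tau_N)$ is a $\varphi\in Hom_\Mc(M,N)$ with $\delta_{1*}(\varphi)\tau_M=\tau_N\delta_{0*}(\varphi)$. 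The goal is to produce a natural bijection between this data and the data defining $\Zc_\Cc(\Mc^{op})$.

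The main step is to convert the single structure morphism $\tau$ into a family $\{\tau_C\}_{C\in\Cc}$. Using \eqref{adjointaction}, namely $C\la M=C\lba\Delta_{r*}M$ together with the analogous description of $M\ra C$ via $\Delta_{l*}$, one obtains for each $C\in\Cc$ the isomorphism $\tau_C:=C\lba(\tau):C\la M\to M\ra C$, which is natural in $C$ because $\Delta_{r*},\Delta_{l*}$ and the $C\lba(-)$ are functorial. Conversely, given a natural family of isomorphisms $\tau_C:C\la M\to M\ra C$, I would invoke the identity recorded just before the statement — in the setting of Section \ref{specialcase} one has $H\lba T=T$ (with $H$ acting on itself on the right), so $H\la M=\Delta_{r*}M$ and $M\ra H=\sigma\Delta_{l*}M$ — whence evaluating the family at $C=H$ returns a morphism $\Delta_{r*}M\to\sigma\Delta_{l*}M$, and naturality of $\{\tau_C\}$ forces the two passages to be mutually inverse. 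This Yoneda-type translation, trading a structure map living in the tensor-product category $\Mc\bt\Cc={}_{A\ot H}\Mc$ for a natural transformation between internal-Hom objects of $\Mc^{op}$, is the place where the concrete hypotheses of Section \ref{specialcase} really enter, and the step I expect to demand the most care.

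It then remains to see that the coherence conditions correspond under this dictionary. The unitality $s_{0*}(\tau)=Id_M$ translates to $\tau_1=Id_M$, since $s_{0*}$ is the passage to the component at the monoidal unit $1\in\Cc$ (cf. Lemma \ref{unit} and the $H$-invariant computation in the proof of Corollary \ref{firstpart}). For associativity, one unwinds the three cofaces in $\Mc\bt\Cc^{\bt 2}$: under the identifications above the cocycle relation \eqref{associativity} becomes exactly $(\tau_C\ra Id)\circ(Id\la\tau_D)=\tau_{C\ot D}$, which is the associativity defining $\Zc_\Cc(\Mc^{op})$; this is a diagram chase of the same flavor as the proofs of Lemmas \ref{ass} and \ref{unit}. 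Finally, applying $C\lba(-)$ to $\delta_{1*}(\varphi)\tau_M=\tau_N\delta_{0*}(\varphi)$ shows that a morphism of $HH(\Cc,\Mc)$ is precisely a morphism of $\Mc$ commuting with every $\tau_C$, and the case $C=H$ gives the converse; hence the two categories have the same objects and the same morphisms, i.e., the stated equality $HH(\Cc,\Mc)=\Zc_\Cc(\Mc^{op})$ holds.
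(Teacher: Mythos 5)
Your argument is correct and follows essentially the same route as the paper's proof: the forward direction applies $C\lba(-)$ to $\tau$ (the paper phrases this via the representable functor $Hom_\Mc(-,M)$, which is the same thing through \eqref{adjointaction}), the reverse direction uses exactly the paper's key observation that $H\lba T=T$ lets one recover $\tau$ as $\tau_H$ with naturality supplying the $H$-equivariance, and the translation of unitality and associativity matches the paper's diagram chase. No gaps.
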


\begin{proof}
This is almost a tautology since for $M\in HH(\Cc,\Mc)$ the $\tau$ of \eqref{thetau} equips the representable $F(-)=Hom_\Mc(-,M)$ with the necessary $\tau_C$'s for $C\in\Cc$ via \begin{align*}C\cdot F&=Hom_\Mc(-\cdot C,M)\simeq Hom_{\Mc\boxtimes\Cc}(-\boxtimes C,\Delta_{r*}M)\\&\simeq Hom_{\Mc\boxtimes\Cc}(-\boxtimes C,\sigma\Delta_{l*}M)\simeq Hom_\Mc(C\cdot -,M)=F\cdot C,\end{align*} while the unit and associativity conditions ensure the same for $\tau_C$'s.

The above argument can almost be reversed (via Yoneda) except that not every object of $\Mc\boxtimes\Cc$ is of the form $N\boxtimes C$.  This is easily rectified under the assumptions of Section \ref{specialcase} since the $\tau_C$ of \eqref{tauadj} when used with $C=H$ and its naturality, actually equals (recovers/defines) that of \eqref{thetau}, i.e., $\tau=\tau_H$.

Let us also sketch the equivalence of the centrality associativity to that of \eqref{associativity}.  The unit condition is similar.  Observe that $\delta_{1*}(\tau)$ identifies $Hom_\Mc(N\cdot(C\ot D),M)$ with $Hom_\Mc((C\ot D)\cdot N,M)$ via \begin{align*}Hom&_\Mc(N\cdot(C\ot D),M)=Hom_\Mc(\delta_0^*\delta_1^*(N\boxtimes C\boxtimes D),M)\\&=Hom_{\Mc\boxtimes\Cc^2}(N\boxtimes C\boxtimes D,\delta_{1*}\delta_{0*}M)
\stackrel{\delta_{1*}(\tau)}{\to}Hom_{\Mc\boxtimes\Cc^2}(N\boxtimes C\boxtimes D,\delta_{1*}\delta_{1*}M)\\
&=Hom_\Mc((C\ot D)\cdot N,M).
\end{align*}Similarly $\delta_{0*}(\tau)$ identifies $Hom_\Mc(N\cdot(C\ot D),M)$ with $Hom_\Mc(D\cdot N\cdot C,M)$, and $\delta_{2*}(\tau)$ identifies the latter with $Hom_\Mc((C\ot D)\cdot N,M)$.
\end{proof}

Note that Lemma \ref{contracoh} holds even in the case when $H$ is a bialgebra since the presence of the antipode $S$ was never used.

\begin{remark}\label{rem:biclosedfromadj}
Even though a correct definition of $C\lba T$ can be given (in either the $H$-module, or $H$-comodule case), and so the biclosed property would follow from the existence of right adjoints to the actions, it is more difficult to define than the adjoints.  On the other hand the monadic approach bypasses the biclosed property necessary to speak of $\Zc_\Cc(\Cc^{op})$, while dealing with the action adjoints directly.  The monadic approach yields immediately the usual descriptions of anti-Yetter-Drinfeld modules and contramodules in the comodule and module case respectively.
\end{remark}

\bibliography{hopfext}{}
\bibliographystyle{plain}
\bigskip

\noindent Department of Mathematics and Statistics,
University of Windsor, 401 Sunset Avenue, Windsor, Ontario N9B 3P4, Canada

\noindent\emph{E-mail address}:
\textbf{ishapiro@uwindsor.ca}

\end{document}